\newtheorem{theorem}{Theorem}[section]
\newtheorem*{theorem*}{Theorem}
\newtheorem{proposition}[theorem]{Proposition}
\newtheorem{lemma}[theorem]{Lemma}
\newtheorem{corollary}[theorem]{Corollary}
\theoremstyle{definition}
\newtheorem{definition}[theorem]{Definition}
\newtheorem{remark}[theorem]{Remark}
\newenvironment{myproof}[2] {\emph{Proof of {#1} {#2}.}}{\hfill$\square$}
\DeclareMathOperator{\reals}{\mathbb{R}}
\DeclareMathOperator{\naturals}{\mathbb{N}}
\DeclareMathOperator{\stab}{Stab}
\DeclareMathOperator{\diam}{diam}
\DeclareMathOperator{\NW}{NW}
\DeclareMathOperator{\Per}{Per}
\newcommand{\dd}{\mathrm{d}}
\title{Topology of horocycles on geometrically finite nonpositively curved surfaces}
\author{Sergi Burniol Clotet\\
	IMERL, Udelar, Av. Julio Herrera y Reissig 565, Montevideo\\
	(email: sergi.burniol@gmail.com)}
\begin{document}

\maketitle

\begin{abstract}
	We study the closure of horocycles on rank $1$ nonpositively curved surfaces with finitely generated fundamental group. Each horocycle is closed or dense on a certain subset of the unit tangent bundle. 
	In fact, we classify each half-horocycle in terms of the associated geodesic rays. 	
	We also determine the nonwandering set of the horocyclic flow and characterize the surfaces admitting a minimal set for this flow. 
\end{abstract}

Key words: horocyclic flow, nonpositive curvature, geometrically finite, orbit closures

\section{Introduction}

Let $M$ be a complete surface with nonpositive curvature. We say that $M$ is geometrically finite if the fundamental group is finitely generated. We assume that $M$ is nonelementary, which means that the fundamental group is not cyclic, or equivalently, that the surface topologically is not a cylinder. It is also assumed that the curvature is not identically zero, so $M$ is a rank $1$ surface. The goal of this article is to study the topological properties of the horocyclic flow $h_s$ on the unit tangent bundle $T^1M$. At the beginning of the next section, we recall the definition of this flow.

In \cite{Eberlein79}, Eberlein studied the geometry of ends of nonpostively curved surfaces. When $M$ is geometrically finite there are finitely many ends and each one is collared, which means that they have a neighborhood homeomorphic to a cylinder. He suggested two classifications, one metric and the other dynamical, of the collared ends into four types: simple parabolic, exceptional parabolic, cylindrical and expanding. Link, Picaud and Peigné proved that the two classifications are equivalent \cite{LinkPeignePicaud}. In strictly negative curvature, a simple parabolic end is commonly referred to as a cusp and the expanding end as a funnel. The cylindrical end has a neighborhood isometric to a flat cylinder $S^ 1\times \reals_+$. The exceptional parabolic end has a behavior between a cylinder and an exceptional parabolic end.

Let $X$ be the universal cover of $M$ with the lifted metric, and let $\Gamma $ be the group of covering transformations, which is identified to the fundamental group of $M$. We recall that $X$ can be compactified by adding a visual boundary $\partial X$ (see Section \ref{Sec:preliminaries} for the definition and basic properties). The limit set $\Lambda $ of $\Gamma$ is the set of accumulation points in $\partial X$ of an orbit $\Gamma p$, $p\in X$. It is also the smallest closed $\Gamma$-invariant subset of $\partial X$ \cite[Theorem 2.8]{Ballmann82}. We define the set $\Omega^+$ of vectors in $T^ 1M$ which have a lift in $T^1 X$ that points in the positive direction to the limit set $\Lambda$.

Our main result establishes what is the topology of any horocyclic orbit in $T^1 M$. A geodesic ray $c:\reals_+\to M$ is minimizing if the distance between any of its points is realized by the same geodesic ray $c$. We say that $c$ is eventually minimizing if for some $T>0$, $c $ is minimizing on $[T,+\infty)$.

\begin{theorem}\label{main_theorem}
	Let $M$ be a nonelementary geometrically finite surface with nonpositive curvature. For $v\in T^ 1M$,
	\begin{enumerate}
		\item if the geodesic ray $g_{\reals_+}v$ converges to a parabolic or cylindrical end and is eventually minimizing, then $h_{\reals}v$ is periodic, hence compact,
		
		\item if the geodesic ray $g_{\reals_+}v$ converges to an exceptional parabolic or cylindrical end and is not eventually minimizing, then $\overline{h_{\reals}v} = h_{\reals}v \cup \Omega^ + $,
		
		\item if the geodesic ray $g_{\reals_+}v$ converges to an expanding end, then $h_{\reals}v$ is closed and also converges to that end,
		
		\item if the geodesic ray $g_{\reals_+}v$ does not converge to an end, then $\overline{h_{\reals}v} = \Omega^ + $.
	\end{enumerate}
\end{theorem}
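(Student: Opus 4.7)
My plan is to lift to the universal cover $X = \uc$ and fix a lift $\tilde v$ with endpoint $\xi = c_{\tilde v}(+\infty) \in \partial X$, so that $h_{\reals}v$ is the image in $\tm$ of the strong unstable (horocycle) leaf $\fu(\tilde v)$, and $\overline{h_{\reals}v}$ records how the $\Gamma$-translates of $\fu(\tilde v)$ accumulate on each other. Parts (1) and (3) are then pure local end geometry. Under the hypothesis of (1), eventual minimality together with the end classification of Eberlein and Link--Peigné--Picaud identifies $\xi$ as a parabolic fixed point of some $\gamma \in \Gamma$ (simple or exceptional parabolic case) or as an endpoint of the axis of an isometry bounding a flat half-cylinder (cylindrical case); in both situations $\gamma$ preserves $\xi$ and acts on $\fu(\tilde v)$ by a nontrivial horocyclic translation, so $h_{\reals}v$ is periodic. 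Under the hypothesis of (3), after some time $v$ lies in a funnel with strictly negative curvature in which $\fu$ embeds and whose horospheres escape monotonically outward along geodesic rays, yielding a closed orbit that converges to the funnel end.

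Parts (2) and (4) contain the substantive content. The inclusion $\overline{h_{\reals}v} \subset h_{\reals}v \cup \Omega^+$ follows from closedness of the limit set $\Lambda$ combined with the rank $1$ visibility of the boundary. The reverse inclusion $\Omega^+ \subset \overline{h_{\reals}v}$ is the crucial one, and I would prove it by a Hedlund-type density argument. Under either hypothesis the ray $g_{\reals_+}v$ has a sequence of times $t_n \to +\infty$ with $g_{t_n}v$ returning to a compact subset of $\tm$ and converging to some regular rank $1$ vector $w_0 \in \Omega$. A local product chart at $w_0$ built from strong unstable and weak stable leaves lets me produce, for any target $w \in \Omega^+$, parameters $s_n$ with $h_{s_n}v \to w$, using the recurrence at $w_0$ to insert horocyclic approximations and an appropriate hyperbolic element of $\Gamma$ to translate them to $w$. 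Part (2) is handled by separating the transient excursion of $g_{\reals_+}v$ through the end, which contributes the periodic piece $h_{\reals}v$ in the closure, from the eventual recurrent behavior, which is treated exactly as in (4).

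The main obstacle will be executing this density step carefully in the rank $1$ geometrically finite setting. The geodesic flow is only non-uniformly hyperbolic: flat strips may exist, contraction is polynomial near bounded parabolic points, $\Omega^+$ is not a smooth manifold, and the horocycle $\fu(\tilde v)$ could fail to sweep a full transversal at $w_0$ if $\xi$ lies in a degenerate stratum of $\partial X$. Producing the correct transversal at a recurrent rank $1$ regular vector and verifying that it is genuinely crossed by $\Gamma$-translates of $\fu(\tilde v)$, uniformly in the target $w \in \Omega^+$ and including the boundary pieces of $\Omega^+$ coming from exceptional parabolic and cylindrical ends, is the delicate technical step on which the argument stands or falls.
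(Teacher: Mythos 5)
Your outline of parts (1) and (3) matches the paper's: eventually minimizing rays to a parabolic or cylindrical end are the $r$-curves in the Eberlein/Link--Peign\'e--Picaud collar coordinates, giving periodic horocycles, and the expanding case follows from the Tits-radius computation showing the whole horocycle stays inside the funnel. The forward inclusion $\overline{h_{\reals}v}\subset h_{\reals}v\cup\Omega^+$ also matches (it is Proposition \ref{accumulation} in the paper, which rests on the case analysis in Proposition \ref{noaccpoints}, not only on closedness of $\Lambda$, but your claim is correct). However, parts (2) and (4) contain a genuine gap.

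For part (2) your framing is wrong in a way that matters. You write that ``under either hypothesis the ray $g_{\reals_+}v$ has a sequence of times $t_n\to+\infty$ with $g_{t_n}v$ returning to a compact subset'' and that (2) is handled by ``separating the transient excursion \dots from the eventual recurrent behavior, which is treated exactly as in (4).'' But in case (2) the geodesic ray converges to the exceptional parabolic or cylindrical end: its endpoint $\tilde v_+$ lies in the \emph{interior} of a connected component $I=(\eta_1,\eta_2)$ of $\partial X\setminus\Lambda$, so $v\notin\Omega^+$, the ray leaves every compact set permanently, and there is no eventual recurrence to fall back on. The density of one half-horocycle in this case comes from an entirely different mechanism (the paper's Proposition \ref{dense_parabolic_cylin}): the exceptional-parabolic or axial isometry $\gamma$ bounding the end preserves the horocycle $\partial\tilde E$ that projects to the closed boundary curve but does \emph{not} fix $\tilde v_+$, so $\gamma^n$ slides the basepoint of $\tilde v$ along $\partial\tilde E$ producing vectors on $h_{\reals_+}\tilde v$ whose projections accumulate on a $g_t$-periodic rank-one vector tangent to the end boundary; one then invokes density for \emph{that} periodic vector's half-horocycle (Proposition \ref{characterization_dense_hor_periodic}).

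For part (4) your Hedlund-type local-product argument is closer in spirit to the paper, but you have not addressed the step that actually makes it go through: how to accumulate on a \emph{full} weak-unstable leaf, i.e.\ how to hit every point of the $g_t$-orbit rather than a single transversal slice. In strictly negative curvature this comes for free from uniform hyperbolicity; here, with flat strips and only non-uniform rank-one contraction, the paper needs a nonarithmeticity-of-the-length-spectrum argument (the Claim and Lemma \ref{lemma_nonarith} inside Proposition \ref{characterization_dense_hor_periodic}) together with Knieper's contraction lemma (Lemma \ref{lemma_Knieper}) for recurrent rank-one vectors to sweep out the geodesic-flow direction. Your proposal names the obstruction (``flat strips may exist, contraction is polynomial near bounded parabolic points'') but proposes no workaround; inserting ``an appropriate hyperbolic element of $\Gamma$'' does not explain how to control the time parameter. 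A route avoiding the length-spectrum argument does exist for the full horocycle (the paper's Proposition \ref{dense_horocyclic}, using duality with the $\Gamma$-action on the horocycle space plus Gromov products along a well-chosen axial isometry), and since item (4) only asserts density of the full orbit, that route would suffice for (4); but you still need Proposition \ref{characterization_dense_hor_periodic} (hence the nonarithmeticity) to make part (2) work, as explained above, so the step cannot be sidestepped entirely.
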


This result generalizes the work of Hedlund on finite volume hyperbolic surfaces \cite{Hedlund36} and Dal'bo on geometrically finite manifolds with strictly negative curvature \cite{Dalbo00}. Eberlein characterizes dense horospheres on a class of nonpositively curved manifolds where every unit tangent vector is nonwandering under the geodesic flow \cite{Eberlein73a}. Ergodic properties of the horocyclic flow on nonpositively curved surfaces were also studied in \cite{Burniol21,Burniol23}. Under the more general hypothesis of our theorem, Link, Peigné and Picaud show that the horocyclic flow in restriction to $\Omega^+$ is transitive i.e. it has a dense orbit \cite{LinkPeignePicaud}. In fact, they show that horocyclic orbits of periodic vectors for the geodesic flow are always dense in $\Omega^+$.

The vectors satisfying any of the conditions of the theorem above also admit a nice description in terms of their end points at infinity and their relation with the limit set $\Lambda$ and the group $\Gamma$. The phenomenon presented by horocyclic orbits of item $2$ does not occur in strictly negative curvature. We will see that these orbits are nonwandering, but still they accumulate to the set $\Omega^+$.

To prove this classification, it is convenient to understand the behavior of the positive and the negative half-horocycles $h_{\reals_+}(v)$ and $h_{\reals_-}(v)$, since it is not always the same. We know the following:

\begin{itemize}
	\item in item $2$, one half-horocycle accumulates to $\Omega^+$ and the other converges to the corresponding end,
	\item in item $4$, if the vector $v$ generates a closed geodesic which bounds an expanding end, or is asymptotic to such a geodesic, then one half-horocycle is dense in $\Omega^+$ and the other converges to the expanding end,
	\item if $v$ is any other vector in item $4$, each half-horocycle is dense in $\Omega^ +$.
\end{itemize} 

In strictly negative curvature, the study of half-horocycles was carried out by Schapira \cite{Schapira11}, with the same conclusion for vectors in item $4$. Our methods are partly based on her work.

A closed $h_s$-invariant nonempty subset $\mathcal{M}$ of $T^1 M$ is called minimal if there is no proper subset of $\mathcal{M}$ with that property. Periodic or closed horocycles are trivial minimal sets. A limit point $\xi\in \Lambda$ is called \emph{horocyclic} if there exist a point $p$ in $X$ and a sequence $\gamma_n$ in $\Gamma$ such that $\gamma_n p$ converges to $\xi$ and the the sequence $\gamma_n p$ enters every horoball centered at $\xi$ (see Section \ref{Sec:preliminaries} for more details).
One relevant consequence of our classification is the following.

\begin{corollary}\label{minimal_subsets}
	The horocyclic flow of $M$ has a nontrivial minimal set if and only if all the points in the limit set $\Lambda$ are horocyclic. If it exists, the nontrivial minimal set is $\Omega^+$.
\end{corollary}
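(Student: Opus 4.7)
The plan is to use Theorem~\ref{main_theorem} in two ways: first to pin down $\Omega^+$ as the only candidate for a nontrivial minimal set, then to classify when every orbit inside $\Omega^+$ is dense.

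I would start by showing that any nontrivial minimal set $\mathcal{M}$ must equal $\Omega^+$. Pick $v\in\mathcal{M}$; minimality gives $\overline{h_{\reals}v}=\mathcal{M}$, and Theorem~\ref{main_theorem} leaves only four possibilities. In items 1 and 3 the closure is a single periodic or closed orbit, so $\mathcal{M}$ would be trivial; in items 2 and 4 the closure contains the nonempty closed $h_s$-invariant set $\Omega^+$, and since $\Omega^+\subset\mathcal{M}$ is closed and $h_s$-invariant, minimality forces $\mathcal{M}=\Omega^+$.

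Next I would decide when $\Omega^+$ itself is minimal, equivalently when every orbit in $\Omega^+$ is dense. For $v\in\Omega^+$ we have $v_+\in\Lambda$, which excludes item 3 (rays into expanding ends) as well as the cylindrical-end cases of items 1 and 2, since in all those cases the endpoint of the ray leaves the convex core and hence is not a limit point. The remaining options are item 1 with a parabolic end (where $v_+$ is the corresponding parabolic fixed point and $h_{\reals}v$ is periodic, hence not dense in $\Omega^+$), item 2 with an exceptional parabolic end, or item 4; in the last two Theorem~\ref{main_theorem} already yields $\overline{h_{\reals}v}\supset\Omega^+$, i.e.\ dense in $\Omega^+$. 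So $\Omega^+$ is minimal iff no $v\in\Omega^+$ is of the first type, iff $\Lambda$ contains no parabolic fixed points; every parabolic end supplies such a vector via an eventually minimizing radial ray to the cusp.

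Finally I would translate this into the language of horocyclic limit points. A parabolic fixed point is never horocyclic, since its cyclic parabolic stabilizer keeps the orbit $\Gamma p$ on a single horosphere rather than pushing it into every horoball. Conversely, if $\xi\in\Lambda$ is not a parabolic fixed point, take $v$ with $v_+=\xi$; then $v\in\Omega^+$, and by the case analysis above $v$ must lie in item 4. Hence $g_{\reals_+}v$ returns to a compact part of $M$ at a sequence of times $t_n\to\infty$, and lifting this recurrence to $X$ produces $\gamma_n\in\Gamma$ with $\gamma_n^{-1}p\to\xi$ staying within bounded distance of a ray to $\xi$, so $\xi$ is conical and in particular horocyclic. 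Combining the steps, all points of $\Lambda$ are horocyclic iff $\Lambda$ has no parabolic fixed points iff $\Omega^+$ is minimal, which together with the uniqueness step gives the corollary.

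The main obstacle is the case analysis in the second paragraph: one must know that rays converging to cylindrical or expanding ends have endpoints outside $\Lambda$, so that items 3, 1--cylindrical and 2--cylindrical cannot occur inside $\Omega^+$. This geometric input about the location of the ends relative to the convex core has to be in place before the logic above can run; once it is, everything else follows from Theorem~\ref{main_theorem} and the definition of a horocyclic limit point.
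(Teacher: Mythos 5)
Your overall strategy is sound and closely parallels the paper's: use the orbit-closure classification to show $\Omega^+$ is the only candidate for a nontrivial minimal set, then characterize when $\Omega^+$ itself is minimal via the absence of periodic horocycles inside it, and finally translate this into the horocyclic-limit-point condition. The first two paragraphs are essentially correct, matching the paper's use of Proposition~\ref{accumulation} and the dichotomy between periodic and dense orbits.

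The gap is in your third paragraph, where you conflate ``parabolic fixed point'' with ``center of a parabolic isometry''---a distinction that is vacuous in strictly negative curvature but essential here. An exceptional parabolic isometry $\gamma$ fixes a whole interval $H_\xi(\infty)=[\xi_1,\xi_2]$ pointwise, and $\xi_1,\xi_2$ lie in $\Lambda$, yet the center $\xi$ (the unique point whose horocycles $\gamma$ preserves) need not coincide with $\xi_1$ or $\xi_2$. Your claim that ``a parabolic fixed point is never horocyclic'' is therefore false: Lemma~\ref{intersection_types_limit}(3) shows that $\xi_1$ and $\xi_2$ \emph{are} horocyclic when they differ from the center, precisely because $\gamma$ does not preserve horocycles centered there, so $\beta_{\xi_1}(\gamma^n p,p)\to-\infty$. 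Correspondingly, your converse argument also breaks down: for a limit point $\xi$ that is an exceptional parabolic fixed point but not the center, the vector $v$ with $v_+=\xi$ falls under item 2 of Theorem~\ref{main_theorem} (the ray converges to the exceptional parabolic end and is not eventually minimizing, by Lemma~\ref{distance_geod_horo}), not under item 4, so it never returns to the compact part and your conical-recurrence argument does not apply; such points are horocyclic but \emph{not} radial. Likewise, ``every parabolic end supplies such a vector via an eventually minimizing radial ray'' fails for an exceptional parabolic end whose center is not in $\Lambda$: the radial ray lifts to a ray pointing at the center, hence outside $\Lambda$, hence outside $\Omega^+$. The correct intermediate condition is ``$\Lambda$ contains no \emph{center} of a parabolic isometry,'' and the equivalence of this with ``all limit points are horocyclic'' is exactly what Proposition~\ref{Geometrically_finite_limit_points} together with Lemma~\ref{intersection_types_limit} supplies---which is what the paper's proof invokes in place of your ad-hoc conical argument.
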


A vector $v\in T^1 M$ is nonwandering for the horocyclic flow $h_s$ if for every open subset $U$ of $T^1M$ containing $v$ the set $$\{s\in \reals : h_s(U)\cap U\not = \emptyset \}$$ is unbounded. The vectors satisfying the property of item $1$ in Theorem \ref{main_theorem}  are nonwandering, because they are periodic, but they are not in the set $\Omega^+$ in general. We show:

\begin{theorem}\label{nonwandering_set}
	The nonwandering set of the horocyclic flow is the union of $h_s$-periodic vectors and the set $\Omega^+$.
\end{theorem}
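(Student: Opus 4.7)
My plan is to prove the two inclusions in the claimed decomposition of $\NW(h_s)$. For the inclusion of $h_s$-periodic vectors and $\Omega^+$ in $\NW(h_s)$, periodic vectors are trivially nonwandering. For $v \in \Omega^+$, the transitivity of the horocyclic flow on $\Omega^+$ (due to Link, Peign\'e and Picaud, as cited in the introduction) yields a residual set of vectors in $\Omega^+$ with orbit dense in $\Omega^+$; given an open neighborhood $U \ni v$, I would pick $w \in U \cap \Omega^+$ with $\overline{h_\reals w} = \Omega^+$, so that $h_{s_n}(w) \in U$ for some $s_n \to \infty$ and hence $h_{s_n}(U) \cap U \neq \emptyset$, proving $v \in \NW(h_s)$.

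For the reverse inclusion, I would take $v \in \NW(h_s)$ and apply the classification of Theorem \ref{main_theorem}. Case (1) yields $v$ periodic and case (4) gives $v \in \overline{h_\reals v} = \Omega^+$, both immediate. Cases (2) and (3) must be ruled out by showing that their vectors are wandering. Case (3) is straightforward: since $h_\reals v$ is closed in $T^1 M$ and converges to the expanding end, $h_s v$ leaves every compact set as $|s|\to\infty$; moreover, in a funnel-type neighborhood of the expanding end the horocyclic foliation is a proper family of leaves each converging to that end, so a small enough $U\ni v$ in that region has the property that every horocyclic orbit through $U$ escapes $U$ without returning, contradicting $v \in \NW(h_s)$.

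Case (2) is the main obstacle. First, $v \notin \Omega^+$, because the forward endpoint of a non-minimizing geodesic into a cylindrical or exceptional parabolic end is not a limit point of $\Gamma$, as can be verified by a direct analysis of how $\Gamma$-orbits accumulate in the lift of the end. Since $\Omega^+$ is closed, $U$ can be shrunk to be disjoint from $\Omega^+$; I would further take $U$ inside a neighborhood of the end and small enough in the direction coordinate to exclude the eventually minimizing vectors of case (1), whose horocycles are periodic. Each $u \in U$ then has a case-(2) horocyclic orbit whose two halves escape $U$ (one to the end, the other into the bulk to accumulate at $\Omega^+$), and a quantitative estimate on the spiral slope of such horocycles in the cylindrical or exceptional parabolic end shows that the return times of $h_\reals u$ to $U$ are uniformly bounded. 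Therefore $\{s : h_s(U) \cap U \ne \emptyset\}$ is bounded, contradicting $v \in \NW(h_s)$. The main difficulty is obtaining this uniform bound in the exceptional parabolic end, where the local geometry is no longer flat.
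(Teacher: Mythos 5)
Your first inclusion is fine, though where you invoke transitivity and Baire category, the paper already has the sharper statement (from Proposition \ref{characterization_dense_half}) that \emph{every} nonperiodic vector of $\Omega^+$ has dense orbit and uses that directly. The real issue is the reverse inclusion. There is a small error: it is not true that every vector in case (2) of Theorem \ref{main_theorem} lies outside $\Omega^+$. For an exceptional parabolic end with interval $I=(\eta_1,\eta_2)$ and center $\eta$, the extremities $\eta_1,\eta_2$ are exceptional parabolic limit points (so in $\Lambda$), and by Lemma \ref{distance_geod_horo} a ray pointing to $\eta_1$ eventually stays in the standard horoball with distance to its boundary diverging, hence converges to the end without being eventually minimizing (when $\eta_1\neq\eta$). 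Such a vector is in case (2) but also in $\Omega^+$. The right framing is to take $v\notin\Omega^+\cup\Per(h_s)$ directly and show it is wandering; then $\tilde v_+$ lies in the open interval $I\subset\partial X\setminus\Lambda$ and your setup applies.

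The main gap is the ``quantitative estimate on the spiral slope'' of horocycles in the cylindrical or exceptional parabolic end, which you leave undeveloped and yourself flag as unclear in the non-flat exceptional parabolic case. The paper avoids that approach entirely. It works in $T^1X$: let $\tilde E$ be the lift of a standard neighborhood of the end, a horoball or half-plane with the property that $\gamma\tilde E\cap\tilde E=\emptyset$ for $\gamma$ outside its stabilizer. One takes a small product neighborhood $V$ of $\tilde v$ of the form $\{\tilde w:\pi(\tilde w)\in B(\pi(\tilde v),\varepsilon),\ \tilde w_+\in J\}$ with $\overline J\subset I$ avoiding the center of $\tilde E$, and uses Proposition \ref{distance_horocycles} (Proposition \ref{TypesCC} in the expanding case) to show that for one sign $\delta$ of $s$, uniformly over the compact $\overline V$, $\pi(h_s(\tilde w))$ enters $\tilde E$ and $d(\partial\tilde E,\pi(h_s(\tilde w)))\to\infty$. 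Choosing $s_0$ so that for $\delta s\ge s_0$ this distance exceeds $\sup_{\overline V}d(\Gamma\partial\tilde E,\cdot)$, the disjointness of the $\Gamma$-translates of $\tilde E$ forces $h_s(U)\cap U=\emptyset$ for $\delta s\ge s_0$, where $U$ is the projection of $V$; the symmetry of $\{s:h_s(U)\cap U\neq\emptyset\}$ then gives the other sign for free. This treats cases (2) and (3) of your dichotomy uniformly, requires no control of returns on the $\Omega^+$-accumulating half, and bypasses precisely the estimate you could not produce.
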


\section{Preliminaries}\label{Sec:preliminaries}

\subsection{Boundary at infinity and horocycles}

Let $X$ be a simply connected nonpositively curved rank $1$ surface. 
We say that two geodesic rays $c_1,c_2:\reals_+\to X$ are \emph{asymptotic} if there exists a constant $C>0$ such that
$$d(c_1(t),c_2(t))\le C $$ for all $t\ge0$. The \emph{boundary at infinity} $\partial X  $ is defined as the set of equivalence classes of asymptotic rays. For a full geodesic $c:\reals \to X$, let $c(+\infty)$ (resp. $c(-\infty)$) denote the class of the positive (resp. negative) ray of $c$. For $v\in T^1X$, we denote by $v_+$ (resp. $v_-$) the asymptotic class of the positive (resp. negative) geodesic ray generated by $v$. We say that $v_+$ is the end point of $v$, and that $v$ points to $v_+$. For every $x\in X$ and every $\xi\in \partial X$, there is a unique vector $V(x,\xi)\in T^1 _xX$ pointing to $\xi$. 

We equip $\overline{X}:=X\cup \partial X$ with the cone topology \cite{EberleinONeill73,ballmannlectures}, which makes this space compact. The boundary at infinity is homeomorphic to a circle, which we orient in the counterclockwise direction. Given two points $\eta$ and $\xi$ in $\partial X$, we denote the set of points on the image of the curve that goes from $\eta $ to $\xi$ in the counterclockwise direction by $[\eta,\xi]$, which is referred to as a \emph{closed interval}. Similarly, we define (half-)open intervals. 

We recall that the \emph{Busemann cocycle} $\beta_\xi: X\times X \to \reals$ at $\xi\in \partial X$ is, for $p,q\in X$,
$$
\beta_\xi(q,p)=\lim_{z\to \xi}d(z,q)-d(z,p).
$$
For fixed $p$, $\beta_\xi(\,\cdot\,,p)$ is said to be a \emph{Busemann function}. 
The \emph{horocycle} with center $\xi\in \partial X$ passing through $p\in X$ is the set 
$$
H_\xi(p)=\{q\in X, \beta_\xi(q,p)=0\}. 
$$ The \emph{closed horoball} bounded by $H_\xi(p)$ is the set 
$$
B_\xi(p)=\{q\in X, \beta_\xi(q,p)\le 0\}.
$$

\begin{proposition}\label{properties_Busemann}\cite{HeintzeImHof}
	For every $p\in X$ and $\xi\in \partial X$, the function $\beta_\xi(\cdot, p)$ is convex and of class $C^2 $. Moreover, the function $(\xi,q,p)\mapsto \beta_{\xi}(q,p)$ is continuous.
\end{proposition}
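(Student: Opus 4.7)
The plan is to address, in order, existence and well-definedness of the defining limit, convexity together with joint continuity, and finally $C^2$ regularity, which is the main obstacle.

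For existence of $\beta_\xi(q,p) = \lim_{z\to\xi}(d(z,q)-d(z,p))$, I would fix the geodesic ray $c_p$ with $c_p(0)=p$ and $c_p(+\infty) = \xi$, and observe that the triangle inequality forces $t \mapsto d(c_p(t), q) - t$ to be non-increasing in $t$ and bounded below by $-d(p,q)$, so the limit exists along this distinguished ray. To extend to arbitrary sequences $z_n \to \xi$ in the cone topology, I would use that the geodesic segment $[p, z_n]$ converges uniformly on compact subsets to $c_p$, combined with the $2$-Lipschitz property of $z \mapsto d(z,q)-d(z,p)$ and convexity of the distance to $q$, to bound the difference between $d(z_n, q) - d(z_n, p)$ and its value along $c_p$ by a vanishing error term.

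Convexity of $\beta_\xi(\cdot, p)$ is immediate from the CAT(0) fact that for each $z \in X$ the function $q \mapsto d(z,q)$ is convex: $\beta_\xi(\cdot, p)$ is a pointwise limit of such functions minus the constant $d(z, p)$, and pointwise limits of convex functions are convex. Joint continuity of $(\xi, q, p) \mapsto \beta_\xi(q, p)$ follows from uniformity of the defining limit on compact subsets of $X \times X$ together with continuous dependence of $c_p$ on $(p, \xi)$ in the cone topology.

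The main obstacle is $C^2$ regularity. Since $X$ is two-dimensional, a Jacobi field along a geodesic $\gamma$ is a scalar function satisfying the Sturm equation $y''(t) + K(\gamma(t))\,y(t) = 0$, with $K \le 0$. For each $T > 0$, let $y_T$ be the solution with $y_T(0)=1$ and $y_T(T)=0$. A Sturm comparison argument exploiting $K \le 0$ shows that the $y_T$ are positive on $[0,T)$, monotone increasing in $T$ at each fixed $t$, and uniformly bounded; hence $y_\infty(t) := \lim_{T \to \infty} y_T(t)$ exists, is positive, and is a $C^2$ solution of the same Sturm equation --- the \emph{stable Jacobi field}. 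Fixing $q \in X$ and letting $\gamma$ be the geodesic from $q$ to $\xi$, one identifies the Hessian of $\beta_\xi(\cdot, p)$ at $q$ in the direction transverse to $\gamma$ with $y_\infty'(0)/y_\infty(0)$, which through the Riccati equation $u' + u^2 + K = 0$ satisfied by $u = y_\infty'/y_\infty$ depends continuously on initial data and coefficients, yielding $C^2$ smoothness of $\beta_\xi(\cdot, p)$ on all of $X$.
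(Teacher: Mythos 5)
The paper offers no proof of this statement; it is cited from Heintze--Im Hof's ``Geometry of horospheres.'' Your reconstruction follows the standard route behind that reference --- monotone limit for existence, pointwise limit of convex functions for convexity, and the stable Jacobi field and Riccati equation for $C^2$ regularity --- and the first three parts are sound in outline (though the uniformity you invoke for joint continuity should be extracted from the monotonicity of $t\mapsto d(c_p(t),q)-t$ rather than simply asserted).

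The gap is in the final sentence of the $C^2$ step. You assert that $u=y_\infty'/y_\infty$ ``depends continuously on initial data and coefficients'' via the Riccati equation $u'+u^2+K=0$, but the stable solution is not selected by any finite-time initial condition: it is the unique solution extending to all of $[0,\infty)$, i.e.\ a condition imposed at $+\infty$. Continuous dependence on initial data would give continuity of the Hessian only if you already knew that $u(0)$ varied continuously with the base geodesic, which is exactly what must be proved. What your construction actually gives is that each $u_T(0)=y_T'(0)$ is continuous in the geodesic and increases monotonically to $u(0)$; a monotone limit of continuous functions is merely lower semicontinuous, and Dini's theorem cannot be invoked without assuming the conclusion. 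The substantive content of the cited reference is a quantitative, locally uniform rate for $u_T\to u$, which Heintze--Im Hof extract from the pinching hypothesis $-b^2\le K\le -a^2<0$ (exponential decay at rate $a$); in the paper's merely nonpositively curved setting that strict upper bound is absent, and the needed uniformity requires a different argument. As written, your proposal states the crucial continuity rather than establishing it.
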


Therefore, horocycles are simple $C^ 2$ curves of $X$. Let $\pi$ be the canonical projection from $T^1X$ to $X$. For $v\in T^1 X$, the stable horocycle $H^s(v)$ of $v$ is the set of unit vectors with basepoint on $H_{v_+}(\pi(v))$ normal to this curve and directed towards the horoball $B_{v_+}(\pi(v))$. All the vectors in $H^s(v)$ point to $v_+$. The horocyclic flow $h_s:T^1 X\to T^1X$ is obtained as the parametrization of the horocycles on $T^1 X$ by the arc-length on $X$, where the horocycles are also oriented in the counterclockwise direction to match the orientation on the boundary at infinity (Figure \ref{Fig:orientation}). By Proposition \ref{properties_Busemann}, the resulting flow is continuous. 

\begin{figure}
	\centering
	\includegraphics{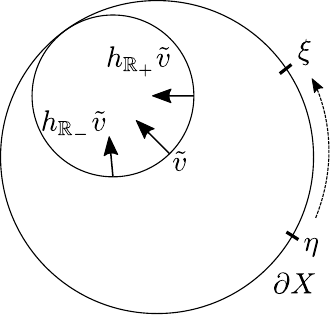}
	\caption{Orientation of the boundary at infinity and the horocycles.}
	\label{Fig:orientation}
\end{figure}

The following is known regarding the relation between horocycles and the boundary at infinity:

\begin{proposition}\label{horcycle_infinity} \cite[Lemme I.7]{LinkPeignePicaud}
	For $p,q\in X$, we set $$H_\xi(\infty)=\{\eta\in \partial X, \, t\mapsto \beta_\xi(\sigma_{p,\eta}(t),q) \text{ is bounded above}\}.$$
	\begin{enumerate}
		\item $H_\xi(\infty)$ does not depend on $p$ and $q$.
		\item $\overline{B_\xi(p)}^{\bar{X}}\cap \partial X = H_\xi(\infty).$
		\item $H_\xi(\infty)$ is a closed interval $I=[\xi_1,\xi_2]$ in $\partial X$ containing $\xi$.
		\item If $\xi_1\not =\xi_2$, the only pair of points in $H_\xi(\infty)$ that may be joined by a geodesic is $\xi_1, \xi_2$. If this two points are joined by a geodesic, then this geodesic bounds a flat half-plane.
		\item If $\eta\in \mathring I$, then $\lim_{t\to +\infty} \beta_\xi(c_{p,\eta}(t),q)=-\infty$, where $c_{p,\eta}$ is the geodesic ray starting at $p$ and pointing to $\eta$.
		
		\item For $p\in X$, the horocycle $H_\xi(p)$ accumulates exactly at the points $\xi_1$ and $\xi_2$. 
	\end{enumerate}
\end{proposition}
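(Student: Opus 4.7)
The plan is to establish the six parts in order, using almost exclusively the convexity and continuity of the Busemann function from Proposition \ref{properties_Busemann} together with the rigidity encoded by the flat half-plane theorem in nonpositive curvature. For Part $(1)$, independence in $q$ reduces to the cocycle identity $\beta_\xi(\cdot,q)-\beta_\xi(\cdot,q') = \beta_\xi(q',q)$, a constant; independence in $p$ uses that the rays $\sigma_{p,\eta}$ and $\sigma_{p',\eta}$ are asymptotic and that Busemann functions are $1$-Lipschitz. For Part $(2)$, if $\eta = \lim q_n$ with $q_n\in B_\xi(p)$, then on each segment $[p,q_n]$ the convex function $\beta_\xi(\cdot,p)$ has endpoint values $0$ and $\le 0$, hence is $\le 0$ throughout; letting $n\to\infty$ at fixed $\sigma_{p,\eta}(T)$ shows $\eta\in H_\xi(\infty)$. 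Conversely, if $\beta_\xi\circ\sigma_{p,\eta}$ is bounded above by some $C$, then $\sigma_{p,\eta}$ lies in a shifted horoball $B_\xi(p')$ centered at $\xi$; since all horoballs centered at $\xi$ share the same boundary at infinity (they differ by flowing along the ray to $\xi$), one gets $\eta\in\overline{B_\xi(p)}\cap\partial X$. Part $(3)$ then follows: closedness comes from continuity of Busemann in the cone topology; $\xi\in H_\xi(\infty)$ because $\beta_\xi(\sigma_{p,\xi}(t),p)=-t$; and $H_\xi(\infty)=\overline{B_\xi(p)}\cap\partial X$ is a connected subset of $\partial X$ (as $B_\xi(p)$ is convex in $X$, its closure in $\bar X$ is connected), so it is a closed arc $[\xi_1,\xi_2]$ containing $\xi$.

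Parts $(4)$ and $(5)$ are the crux. The analytic input is that a convex function on $\reals$ bounded above is constant, and a convex function on $\reals_+$ bounded above is nonincreasing with a limit in $[-\infty,f(0)]$. For Part $(4)$, if $\eta\ne\eta'$ both lie in $I$ and are joined by a geodesic $c:\reals\to X$, then $\beta_\xi\circ c$ is convex and bounded above at both ends by Part $(2)$, hence constant. The asymptotic rays from points of $c$ to $\xi$ then stay at constant mutual Busemann level, and the flat half-plane theorem in nonpositive curvature forces $c$ together with these rays to sweep out a flat half-plane; its two endpoints at infinity other than $\xi$ are precisely the extremes $\xi_1,\xi_2$ of $I$, so $\{\eta,\eta'\}=\{\xi_1,\xi_2\}$. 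For Part $(5)$, if $\eta\in\mathring I$, the convex function $\beta_\xi\circ\sigma_{p,\eta}$ must have a limit as $t\to+\infty$; were this limit finite, the same flat half-plane argument applied to $\sigma_{p,\eta}$ and $\sigma_{p,\xi}$ would place $\eta$ in $\partial I=\{\xi_1,\xi_2\}$, a contradiction. Hence the limit is $-\infty$.

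Part $(6)$ is topological combined with Part $(2)$: the horocycle $H_\xi(p)$ is a properly embedded $C^2$ simple curve in the disk $X$ separating the open horoball from its complement, so its closure in the closed disk $\bar X$ is an arc whose two endpoints on $\partial X$ belong both to $\overline{B_\xi(p)}\cap\partial X=[\xi_1,\xi_2]$ and to the closure of the complement at infinity (for $\eta\notin I$ the ray $c_{p,\eta}$ eventually leaves $B_\xi(p)$, since $\beta_\xi\circ c_{p,\eta}$ is convex and unbounded above); the only candidates are $\xi_1$ and $\xi_2$. The main obstacle is the flat half-plane argument in Parts $(4)$ and $(5)$: verifying that a geodesic or ray along which the Busemann function is constant, together with the asymptotic rays to $\xi$, really does produce a flat half-plane in this rank $1$ nonpositively curved surface, and correctly matching its boundary-at-infinity endpoints to the extremes of $I$.
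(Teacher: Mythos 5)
This proposition is not proved in the paper: it is quoted verbatim from Link, Peign\'e and Picaud, \cite[Lemme~I.7]{LinkPeignePicaud}, as background, so there is no internal proof to compare your outline against.

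Taken on its own terms, your outline is sound for parts $(1)$, $(2)$, $(3)$ and $(6)$, modulo two points worth flagging. First, in part $(3)$ the inference ``$B_\xi(p)$ is convex, hence $\overline{B_\xi(p)}$ is connected, hence $\overline{B_\xi(p)}\cap\partial X$ is connected'' does not go through: a flat strip in $\reals^2$ has connected closure in $\bar X$ but its trace on $\partial X$ is two antipodal points. The correct argument is that $H_\xi(\infty)$ is \emph{Tits-convex} (it is the closed Tits ball of radius $\pi/2$ around $\xi$), or, more elementarily, that any $\zeta$ lying between $\xi$ and a point $\eta\in H_\xi(\infty)$ can be reached by a ray passing through segments $[\sigma_{p,\xi}(s),\sigma_{p,\eta}(s)]$, on which $\beta_\xi(\cdot,p)$ is uniformly bounded above by convexity; a convex function on $\reals_+$ bounded above on an unbounded sequence is bounded above.

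The real gap is the one you flag yourself, in parts $(4)$ and $(5)$. ``The flat half-plane theorem forces a flat half-plane'' is not an appeal to a theorem but a rigidity statement one must prove. Concretely, once $\beta_\xi\circ c$ is constant one knows the geodesic $c$ \emph{is} a horocycle $H_\xi$, and one must show the region between $c$ and $\xi$ is flat. This is done by considering the unit-speed rays $\sigma_t$ from $c(t)$ to $\xi$: they are orthogonal to $c$, so the first variation gives $\tfrac{d}{ds}\big|_{s=0} d(\sigma_0(s),\sigma_t(s))=0$; combined with the convexity and boundedness (asymptoticity) of $s\mapsto d(\sigma_0(s),\sigma_t(s))$, the distance is identically $|t|$, and the sandwich lemma for asymptotic rays at constant distance yields a flat half-strip for each $t$, whence a flat half-plane. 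Identifying the two ideal endpoints of that half-plane with $\xi_1,\xi_2$ (rather than merely showing $\{\eta,\eta'\}\subset[\xi_1,\xi_2]$) also needs an argument: one must rule out points of $H_\xi(\infty)$ lying outside the arc cut off by $c$, e.g.\ by using that a ray to such a point together with $c$ would again force constancy of $\beta_\xi$ along $c$ and a flat half-plane on the \emph{other} side of $c$, contradicting rank~$1$. Until these steps are supplied, parts $(4)$ and $(5)$ are not proved.
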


\subsection{Joining points at infinity}

We have mentioned that it is always possible to join a point in $X$ to a point in $X\cup \partial X$ by a unique geodesic. On the one hand, there may be more than one geodesic joining two different points in the boundary.

\begin{theorem} (Flat Strip Theorem)
	If there exist two geodesics $c_1, c_2$ with the same end points in $\partial X$, then $c_1 $ and $c_2$ bound a flat strip isometric to $[0,a]\times \reals$, where $ a $ is the distance between $c_1$ and $c_2$.
\end{theorem}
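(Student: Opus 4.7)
The plan is to deduce flatness from two standard CAT(0) facts: the convexity of the distance function between two geodesics, and the Flat Quadrilateral Theorem (the case of equality in the CAT(0) comparison inequality). First, by the hypothesis $c_1(\pm\infty)=c_2(\pm\infty)$, after a suitable constant reparametrization of $c_2$ both the forward rays and the backward rays of $c_1$ and $c_2$ are asymptotic in the sense of Section~\ref{Sec:preliminaries}. Hence
$$
f(t):=d(c_1(t),c_2(t))
$$
is bounded on all of $\reals$: boundedness on $\reals_+$ is the very definition of the forward rays being asymptotic, and the same argument in negative time handles $\reals_-$.

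Second, since $X$ is simply connected and nonpositively curved, the distance between any two unit-speed geodesics is a convex function of the parameter; in particular $f$ is convex. A convex function on $\reals$ that is bounded above is constant, so $f(t)\equiv a$ for some $a\ge 0$, and this $a$ must then be the distance between the two geodesics. If $a=0$ the geodesics coincide and the statement is trivial; assume $a>0$.

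Third, for arbitrary $s<t$ form the geodesic quadrilateral with vertices $c_1(s),c_1(t),c_2(t),c_2(s)$. Its four side lengths are $t-s,\,a,\,t-s,\,a$, matching exactly a Euclidean $(t-s)\times a$ rectangle. The CAT(0) comparison applied along either diagonal gives an upper bound on that diagonal, and the constancy of $f$ combined with the midpoint version of the CAT(0) inequality shows these bounds are saturated. The \emph{Flat Quadrilateral Theorem} (see e.g.\ Bridson--Haefliger, Chapter II.2, or Ballmann's lecture notes) then guarantees that the convex hull of the quadrilateral is isometric to the Euclidean rectangle $[0,a]\times[s,t]$. Letting $s\to-\infty$ and $t\to+\infty$ produces the required isometric embedding of $[0,a]\times\reals$ with boundary geodesics $c_1$ and $c_2$.

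The main obstacle is the last step. The boundedness-implies-constant argument is essentially immediate, but deducing that the strip between two CAT(0) geodesics at constant distance is genuinely Euclidean is the content of the Flat Quadrilateral Theorem, and this is where the real work lies. I would either invoke this classical result directly or, for a self-contained argument, show that the midpoint curve $t\mapsto m(t)$ of the segments $[c_1(t),c_2(t)]$ is itself a unit-speed geodesic parallel to $c_1$ at distance $a/2$ --- a consequence of equality in the CAT(0) midpoint inequality --- and then iterate this halving procedure to fill out the entire strip with geodesics parallel to $c_1$.
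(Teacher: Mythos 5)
The paper states the Flat Strip Theorem without proof; it is a classical result (Bridson--Haefliger~II.2.13, Ballmann's lecture notes), so there is no argument in the text to compare with. Your overall route — boundedness of $f(t)=d(c_1(t),c_2(t))$ from asymptoticity, convexity of $f$ in a CAT(0) space, bounded convex implies constant, then the Flat Quadrilateral Theorem — is indeed the standard one. However, the step that is supposed to trigger the Flat Quadrilateral Theorem is not actually carried out, and as stated it does not work.

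The gap is in the third paragraph. Four side lengths $t-s,\,a,\,t-s,\,a$ do \emph{not} determine a Euclidean rectangle: any parallelogram with sides $t-s$ and $a$ has the same edge lengths, and the Flat Quadrilateral Theorem is stated with an \emph{angle-sum} hypothesis ($\angle_A+\angle_B+\angle_C+\angle_D\ge 2\pi$), not a side-length hypothesis. The sentence ``the constancy of $f$ combined with the midpoint version of the CAT(0) inequality shows these bounds are saturated'' is an assertion, not an argument; the CN inequality by itself does not force the diagonal to equal $\sqrt{(t-s)^2+a^2}$. What the classical proof actually does is reparametrize so that $c_2(t)$ is the nearest-point projection of $c_1(t)$ onto $c_2$ (equivalently, so that the constant value of $f$ is minimal over shifts $c_2(\cdot+b)$). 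The defining property of nearest-point projection onto a convex set in CAT(0) then gives that the segment $[c_1(t),c_2(t)]$ meets both geodesics at Alexandrov angle $\ge\pi/2$ on each side; this is the hypothesis of the Flat Quadrilateral Theorem, and it is what your write-up is missing. The same reparametrization is also what justifies your offhand claim that the constant $a$ is ``the distance between the two geodesics'': with an arbitrary reparametrization $f$ is still constant but generally larger than $\inf d(c_1,c_2)$. Your proposed midpoint-iteration alternative suffers from the same issue — showing that $t\mapsto m(t)$ is a unit-speed geodesic requires the same orthogonality input (or a careful equality analysis in the CN inequality), so you would not have avoided the work by switching approaches.
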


On the other hand, it is not always possible to join two distinct points at infinity by a geodesic. Examples of this are flat half-planes or flat sectors, or more generally the situation of item 4 in Proposition \ref{horcycle_infinity}. The following lemma gives a sufficient condition to join points at infinity.

\begin{lemma}\cite[Lemma III.3.1]{ballmannlectures} Let $c$ be a geodesic that does bound a flat strip of width $R>0$. Then there exist neighborhoods $U$ and $V$ of $c(+\infty)$ and $c(-\infty)$ in $\partial X$ such that any pair of points in $U\times V$ can be joined by a geodesic $c'$. For any such geodesic $c'$, $d(c',c(0))<R$. 
\end{lemma}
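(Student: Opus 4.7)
The proof is by contradiction, via compactness and the Flat Strip Theorem. I interpret the hypothesis in its standard form: $c$ does not bound any flat strip of width $\geq R$ (I suspect \emph{does bound} in the statement is a typo for \emph{does not bound}, since this reading both matches the announced role of the lemma and agrees with Ballmann's original formulation).

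Suppose the conclusion fails. Then there exist sequences $\xi_n \to c(+\infty)$ and $\eta_n \to c(-\infty)$ in $\partial X$ such that either $\xi_n$ and $\eta_n$ cannot be joined by a geodesic, or every geodesic joining them has distance $\geq R$ from $c(0)$. To produce candidate geodesics I approximate by finite points: pick $p_n, q_n \in X$ with $p_n \to \xi_n$ and $q_n \to \eta_n$ in the cone topology (for instance on the geodesic rays from $c(0)$), and let $\sigma_n$ denote the geodesic segment from $q_n$ to $p_n$.

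The key step is to bound the foot of the perpendicular from $c(0)$ to $\sigma_n$. By continuity of the cone topology, for each fixed $T > 0$ the angles $\angle_{c(0)}(c(T), p_n)$ and $\angle_{c(0)}(c(-T), q_n)$ tend to $0$. Convexity of the distance function in nonpositive curvature, together with a CAT(0) triangle comparison, then shows that $\sigma_n$ passes within a tube of uniformly bounded radius around $c$; in particular the perpendicular foot from $c(0)$ to $\sigma_n$ stays bounded. Arzel\`a--Ascoli yields a subsequence converging uniformly on compact sets to a complete geodesic $c'$, and continuity of the cone topology gives $c'(\pm\infty) = c(\pm\infty)$. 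A diagonal refinement arranges $d(c', c(0)) \geq R$ as well, so $c$ and $c'$ are distinct parallel geodesics; the Flat Strip Theorem then produces a flat strip of width $\geq R$ bounded by $c$, contradicting the hypothesis.

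I expect the main obstacle to be the uniform convexity bound on the perpendicular foot: one must argue carefully, via triangle comparison and the cone topology, that the approximating segments are forced into a fixed neighborhood of $c$. Bridging the dichotomy in the failure of the conclusion (nonexistence of a joining geodesic versus distance $\geq R$) also takes some care in the diagonal argument, but is ultimately routine once the compactness bound is in place.
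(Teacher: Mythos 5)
The paper does not actually prove this lemma --- it is cited verbatim from Ballmann's \emph{Lectures on Spaces of Nonpositive Curvature} (Lemma III.3.1) --- so there is no in-paper argument to compare against. You are right that ``does bound'' is a typo for ``does not bound''; the conclusion would be false otherwise, and your reconstruction targets the correct statement. Your overall strategy (argue by contradiction, extract a limiting geodesic, invoke the Flat Strip Theorem) is the right one.

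However, the central step is misdirected. You claim that because the angles $\angle_{c(0)}(c(T),p_n)$, $\angle_{c(0)}(c(-T),q_n)$ tend to $0$, convexity and CAT$(0)$ comparison force the segments $\sigma_n=[q_n,p_n]$ into a tube of \emph{uniformly bounded} radius around $c$. That is false in general: if $p_n$ sits at depth $t_n$ on the ray from $c(0)$ making angle $\alpha_n$ with $c$, then $d(p_n,c)$ is comparable to $t_n\sin\alpha_n$, which can tend to $+\infty$ if the depth grows faster than $1/\alpha_n$. (In a space containing flat half-planes, segments with endpoints converging to $c(\pm\infty)$ really can drift arbitrarily far from $c$.) Worse, if you \emph{do} cap the depth so that $d(p_n,c)$ and $d(q_n,c)$ stay bounded, then convexity of $d(\cdot,c)$ forces $\sigma_n$ to hug $c$, and the segments then pass arbitrarily close to $c(0)$ --- destroying the property $d(\sigma_n,c(0))\ge R$ that the contradiction hypothesis was supposed to hand you. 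So ``bounded tube'' and ``$d\ge R$'' pull in opposite directions, and the tube route cannot deliver both.

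The clean fix is to abandon the tube and only control the distance to the single point $c(0)$. Fix $n$ and take $a_k,b_k$ on the rays from $c(0)$ toward $\xi_n,\eta_n$ at parameter $k$. The criterion in Proposition \ref{joining} (item 2), together with the contradiction hypothesis (no joining geodesic, or every joining geodesic at distance $\ge R$ from $c(0)$), gives $\liminf_k d([a_k,b_k],c(0))\ge R$. Since $k\mapsto d([a_k,b_k],c(0))$ is continuous and vanishes at $k=0$, you can pick $k(n)$ with $d([a_{k(n)},b_{k(n)}],c(0))=R-1/n$, and $k(n)\to\infty$ because for fixed $k$ the segments converge to $[c(-k),c(k)]\ni c(0)$ as $n\to\infty$. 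Now the segments pass through the compact ball $\bar B(c(0),R)$, Arzel\`a--Ascoli applies, and continuity of $x\mapsto V(x,\cdot)$ forces the limiting geodesic $\gamma$ to satisfy $\gamma(\pm\infty)=c(\pm\infty)$ with $d(\gamma,c(0))=R$; the Flat Strip Theorem then produces a flat strip of width $R$ --- contradiction. Note that the distance from $\gamma_n$ to $c$ (as opposed to $c(0)$) plays no role and need not be bounded; your ``main obstacle'' as phrased is not actually the step that needs arguing.
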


Finally, we recall some equivalent conditions to joining points at infinity.

\begin{proposition}\label{joining}
	Let $\xi,\eta\in \partial X $ be distinct points at infinity. The following are equivalent:
	\begin{enumerate}
		\item there is a geodesic joining $\xi$ to $\eta$,
		
		\item there exist sequences $x_n$ ans $y_n$ in $X$ converging respectively to $\xi$ and $\eta$, a point $p\in X$ and a constant $C$ such that the geodesics $c_n$ joining $x_n$ to $y_n$ satisfy
		$$
		d(c_n, p)\le C,
		$$
		
		\item there exist horocycles $H_\xi$ and $H_\eta$ centered at $\xi$ and $\eta$ such that $\# H_\xi \cap H_\eta \ge 2$,
		
		\item if $h_\xi$ is a Busemann function centered at $\xi$ and $h_\eta$ is a Busemann function centered at $\eta$, then $h_\xi + h_\eta $ assumes its minimum.
		
		Moreover, if there is no geodesic joining $\xi$ to $\eta $, $H_\xi(\infty)\cap H_\eta(\infty) \not =\emptyset.$
		If there is a geodesic joining $\xi$ to $\eta $ and $H_\xi(\infty)\cap H_\eta(\infty) \not =\emptyset$, then the geodesic bounds a flat half-plane.

	\end{enumerate}
\end{proposition}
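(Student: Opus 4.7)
The strategy is to establish the equivalences $(1)\Leftrightarrow(2)$, $(1)\Leftrightarrow(4)$ and $(1)\Leftrightarrow(3)$ in turn, and then deduce the two ``moreover'' assertions. Throughout, the central tool is the function $F := h_\xi + h_\eta$, which is $C^2$ and convex by Proposition~\ref{properties_Busemann}.

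The equivalence $(1)\Leftrightarrow(2)$ is a standard Arzel\`a--Ascoli argument: $(1)\Rightarrow(2)$ uses $x_n = c(-n)$, $y_n = c(n)$, $p = c(0)$, $C=0$, while for $(2)\Rightarrow(1)$ I would reparametrize each $c_n$ so that $c_n(0)$ realizes the distance from $p$ to $c_n$ (so lies in the compact ball of radius $C$ around $p$), extract a convergent subsequence of $(c_n(0),\dot c_n(0)) \in T^1X$, and use continuity of the cone topology to identify the endpoints of the limit geodesic with $\xi$ and $\eta$. For $(1)\Leftrightarrow(4)$, the key computation is $\nabla h_\xi(q) = -V(q,\xi)$, obtained by differentiating $t\mapsto h_\xi(c_{q,\xi}(t))$ at $t=0$ and using that $h_\xi$ is $1$-Lipschitz; hence $\nabla F(q) = 0$ iff $V(q,\eta) = -V(q,\xi)$, iff the geodesic through $q$ with tangent $V(q,\xi)$ joins $\eta$ to $\xi$. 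Since $F$ is convex and $C^1$, its critical points coincide with its global minima, yielding $(1)\Leftrightarrow(4)$.

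For $(1)\Leftrightarrow(3)$: assuming (1), normalize $h_\xi(c(0)) = h_\eta(c(0)) = 0$ so that $F \equiv 0$ on $c$ and $F \ge 0$ elsewhere by (4). For small $a > 0$, the horocycles $\{h_\xi = a\}$ and $\{h_\eta = a\}$ meet $c$ transversally at $c(-a)$ and $c(a)$ respectively (their gradients along $c$ are $-V(\cdot,\xi)$ and $-V(\cdot,\eta)$, which are opposite on $c$); a level-set analysis of $\{F = 2a\} \cap \{h_\xi = a\}$ in a neighborhood of $c$ then produces at least two intersection points. Conversely, given $p_1 \ne p_2 \in H_\xi \cap H_\eta$, I would aim for (2) by taking $x_n := c_{p_1,\xi}(n) \to \xi$ and $y_n := c_{p_2,\eta}(n) \to \eta$, and showing, via convexity of the horoballs together with the fact that rays asymptotic to the same point at infinity remain at bounded distance, that the joining geodesics $c_n$ stay at bounded distance from the segment $[p_1, p_2]$.

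For the moreover statements: if no geodesic joins $\xi$ and $\eta$ then $F$ fails to attain its infimum by $(1)\Leftrightarrow(4)$, so a minimizing sequence $q_n$ escapes $X$ and, up to extraction, converges to some $\zeta \in \partial X$; applying Proposition~\ref{horcycle_infinity}(5) to the sequences $h_\xi(q_n)$, $h_\eta(q_n)$ then forces $\zeta \in H_\xi(\infty) \cap H_\eta(\infty)$. For the final assertion, if $c$ joins $\xi, \eta$ and $\zeta \in H_\xi(\infty) \cap H_\eta(\infty)$, the horocycles $H_\xi(c(0))$ and $H_\eta(c(0))$ both accumulate at $\zeta$ by Proposition~\ref{horcycle_infinity}(6); an Arzel\`a--Ascoli argument on geodesics whose endpoints tend to $\xi$ and $\eta$ along these horocycles produces a second geodesic sharing the endpoints of $c$, and the Flat Strip Theorem combined with a limit construction using the accumulation at $\zeta$ upgrades the resulting flat strip into a flat half-plane. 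The main obstacle will be the uniform control on the joining geodesics $c_n$ in $(3)\Rightarrow(1)$, and upgrading a strip to a half-plane in the very last step.
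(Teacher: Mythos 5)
The paper does not prove this proposition: it cites it. The equivalence $(1)\Leftrightarrow(2)$ is declared standard, $(1)\Leftrightarrow(3)$ and the two ``moreover'' assertions are attributed to \cite[Corollaire I.8]{LinkPeignePicaud}, and $(1)\Leftrightarrow(4)$ to the proof of \cite[Proposition 9.35]{BridsonHaefliger}. Your plan, by contrast, is a genuine attempt at a self-contained proof, so the comparison is mostly about whether your sketch would close.

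The clean parts: $(1)\Leftrightarrow(2)$ and $(1)\Leftrightarrow(4)$ are correct and essentially the standard arguments (the gradient computation $\nabla h_\xi = -V(\cdot,\xi)$ plus ``critical point of a $C^1$ convex function is a global minimum'' is exactly the right mechanism for $(1)\Leftrightarrow(4)$). The sketch of $(1)\Rightarrow(3)$ is right in spirit, but your ``level-set analysis'' silently assumes that $\{F\le 2a\}$ is bounded transverse to $c$, which fails when $c$ bounds a flat strip or half-plane; there the horocycles $\{h_\xi=a\}$ and $\{h_\eta=a\}$ with $a>0$ need not meet, and the correct fix is to pass to $a=0$, where the two horocycles coincide along the whole cross-section of the strip. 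This is a real (if fillable) case omission. Likewise, for the first ``moreover'' you need to deal with the possibility that $\inf F = -\infty$ before you can speak of a minimizing sequence in the useful sense, though the convexity of $h_\xi, h_\eta$ along a ray to the limit $\zeta$ does ultimately give $\zeta\in H_\xi(\infty)\cap H_\eta(\infty)$ in all cases.

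Two parts of the plan are genuine gaps. For $(3)\Rightarrow(1)$ via $(2)$, the claim that the geodesics $c_n$ from $c_{p_1,\xi}(n)$ to $c_{p_2,\eta}(n)$ stay at bounded distance from $[p_1,p_2]$ is exactly equivalent to the conclusion you are trying to reach, and the two tools you invoke (convexity of horoballs; asymptotic rays stay at bounded distance) do not give it: asymptoticity controls $d(c_{p_1,\xi}(t), c_{p_2,\xi}(t))$ and $d(c_{p_1,\eta}(t), c_{p_2,\eta}(t))$, but says nothing about the geodesic joining a point far out on one ray to a point far out on the \emph{other} family of rays. A workable route is topological in a surface: the horocycles $H_\xi$ and $H_\eta$ separate $\overline X$, and two transverse crossings force $\eta_1,\eta_2$ to lie on opposite sides of $H_\xi$, which pins down the cyclic order of $\xi_1,\xi_2,\eta_1,\eta_2$ on $\partial X$ and then forces a geodesic from $\xi$ to $\eta$ by an Arzel\`a--Ascoli argument on chords of $\partial X$ separating $\{\xi_1,\xi_2\}$ from $\{\eta_1,\eta_2\}$. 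For the second ``moreover,'' the Arzel\`a--Ascoli construction you describe is aimed at the wrong endpoints: you take points on $H_\xi(c(0))$ and $H_\eta(c(0))$ tending to the common accumulation point $\zeta$, so the joining geodesics degenerate towards $\zeta$ rather than producing a second geodesic from $\xi$ to $\eta$. The short argument, entirely in the spirit of this paper's toolbox, is metric: $\zeta\in H_\xi(\infty)\cap H_\eta(\infty)$ gives $Td(\xi,\zeta)\le\pi/2$ and $Td(\eta,\zeta)\le\pi/2$, hence $Td(\xi,\eta)\le\pi$; combined with the hypothesis that a geodesic joins $\xi$ and $\eta$, \cite[Proposition 9.21]{BridsonHaefliger} (which the paper already invokes in the proof of Proposition~\ref{TypesCC}) forces that geodesic to bound a flat half-plane.
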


\begin{proof}
	The equivalence between 1 and 2 is standard. The equivalence with 3 and the final statement are shown in \cite[Corollaire I.8]{LinkPeignePicaud}. The equivalence with 4 can be found in the Proof of Proposition 9.35 in \cite{BridsonHaefliger}
\end{proof}

\subsection{Tits metric}

It is not essential in this article, but it can be useful to briefly recall the definition of the Tits distance on the visual boundary $\partial X$. Given $\xi, \eta \in \partial X$, we first define the \emph{angular distance} between $\xi $ and $\eta$ as 
$$\angle (\xi,\eta)=\sup_{p\in X} \angle_p(\xi,\eta).$$ 
The \emph{Tits distance} is defined as the length metric of $(\partial X, \angle)$ and has values in $[0,+\infty]$.

\begin{proposition} \cite[4.10]{ballmannnonpositivebook} 
	 The set $H_\xi(\infty)$
	is equal to the closed ball of radius $\pi/2$ and center $\xi$ in the Tits distance. 
\end{proposition}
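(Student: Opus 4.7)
The plan is to prove that $H_\xi(\infty)$ coincides with the closed angular ball of radius $\pi/2$ about $\xi$, and then to note that this ball equals the closed Tits ball of the same radius. The whole argument rests on a simple derivative identity for the Busemann function along a geodesic and on the convexity of $\beta_\xi$.

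By Proposition \ref{properties_Busemann}, $\beta_\xi(\cdot,q)$ is $C^2$ and convex, and its gradient is the unit vector field $-V(\cdot,\xi)$. Hence for any unit-speed geodesic $c$ in $X$ and any $q\in X$,
\[
\frac{d}{dt}\beta_\xi(c(t),q) \;=\; -\cos\angle_{c(t)}(\dot c(t),\xi),
\]
and this is a nondecreasing function of $t$ because $\beta_\xi\circ c$ is convex.

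Suppose first $\angle(\xi,\eta)\le \pi/2$. For arbitrary $p,q\in X$ take $c=c_{p,\eta}$. Since $\dot c(t)$ is the unit vector at $c(t)$ pointing to $\eta$, one has $\angle_{c(t)}(\dot c(t),\xi)=\angle_{c(t)}(\eta,\xi)\le \angle(\eta,\xi)\le \pi/2$, so the derivative above is nonpositive for all $t$. Thus $t\mapsto \beta_\xi(c(t),q)$ is nonincreasing, in particular bounded above, and Proposition \ref{horcycle_infinity} gives $\eta\in H_\xi(\infty)$. Conversely, if $\angle(\xi,\eta)>\pi/2$, the definition of the supremum yields some $p\in X$ with $\angle_p(\xi,\eta)>\pi/2$. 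Along $c_{p,\eta}$ the derivative of $\beta_\xi\circ c_{p,\eta}$ at $t=0$ equals $\delta:=-\cos\angle_p(\eta,\xi)>0$, and by convexity it stays $\ge \delta$ for every later $t$. Integrating gives $\beta_\xi(c_{p,\eta}(t),q)\ge \beta_\xi(p,q)+\delta\,t\to +\infty$, so $\eta\notin H_\xi(\infty)$.

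It remains to identify the angular ball with the Tits ball of radius $\pi/2$. The inclusion of the Tits ball in the angular ball is immediate from $\angle\le d_T$; the opposite inclusion follows from the standard $\mathrm{CAT}(0)$ fact that the Tits distance agrees with the angular distance whenever the latter is strictly less than $\pi$, which covers the range in play here. The only delicate point in the whole proof is the derivative formula together with the sign interpretation of the cosine, since it requires identifying $\nabla\beta_\xi$ explicitly; once that is available, the conclusion is a direct consequence of convexity of $\beta_\xi$.
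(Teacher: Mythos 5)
Your proof is correct. Note that the paper does not supply its own proof of this proposition; it simply cites Ballmann's book, and the argument you give — identify $\nabla\beta_\xi=-V(\cdot,\xi)$, write the derivative of $\beta_\xi\circ c$ as $-\cos\angle_{c(t)}(\dot c(t),\xi)$, and exploit convexity of $\beta_\xi$ in both directions (nonpositive derivative when $\angle(\xi,\eta)\le\pi/2$, strictly positive and nondecreasing derivative when $\angle(\xi,\eta)>\pi/2$) — is precisely the canonical argument that appears in the cited reference. Your handling of the last step, passing from the angular ball to the Tits ball via the fact that $Td(\xi,\eta)=\angle(\xi,\eta)$ whenever $\angle(\xi,\eta)<\pi$, is also correct, and worth stating explicitly as you do, since the proposition is phrased in terms of the Tits metric while the Busemann estimate naturally lives in the angular metric.
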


\subsection{Isometries}

We now study the isometries of $X$. Given an isometry $\gamma$, we consider the displacement function $d_\gamma:X\to \reals$ defined by $d_\gamma(p)=d(p,\gamma p)$. We denote the infimum of this function by $l_\gamma$.

\begin{proposition}\label{isometries}
	There are four possibilities:
	\begin{enumerate}
		\item $\gamma$ is elliptic if it has a fixed point on $X$.
		\item $\gamma$ is axial if $d_\gamma $ assumes its infimum $l_\gamma$ and $l_\gamma>0$;  then $\gamma$ leaves invariant the geodesic through $p$ and $\gamma p$ and acts on it as a translation by $l_\gamma$. Such a geodesic is called an axis. If there are two axis, then they bound a flat strip. The isometry fixes at least the two end points of their axis.
		\item $\gamma$ is simple parabolic if $d_\gamma $ does not assume its infimum $l_\gamma$ and $l_\gamma=0$; then $\gamma$ fixes a single point $\xi$ at infinity and leaves invariant all the horocycles centered at this point.
		\item $\gamma$ is exceptional parabolic if $d_\gamma $ does not assume its infimum $l_\gamma$ and $l_\gamma>0$; then there exists a unique $\xi\in \partial X$ such that $\gamma$ leaves invariant every horocycle centered at $\xi$; and $\gamma$ fixes the nontrivial interval $H_\xi(\infty)$ point by point.
	\end{enumerate}
\end{proposition}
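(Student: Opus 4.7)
The plan is to organize the classification by a $2 \times 2$ dichotomy: whether $d_\gamma$ attains its infimum $l_\gamma$, and whether $l_\gamma = 0$. The elliptic case is immediate from the definition, so the content lies in the other three.

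For the axial case, the essential input is that $d_\gamma$ is convex on $X$, a standard consequence of nonpositive curvature applied to the quadrilateral with vertices $p, q, \gamma q, \gamma p$. If the minimum is attained at $p$, it is also attained at $\gamma p$ (since $d_\gamma \circ \gamma = d_\gamma$), so by convexity $d_\gamma$ is constant on the segment $[p, \gamma p]$. Concatenating $\gamma$-iterates of this segment yields a bi-infinite geodesic $c$ along which $\gamma$ acts as a translation of length $l_\gamma$, and both endpoints in $\partial X$ are fixed. If a second axis $c'$ exists, then $t \mapsto d(c(t), c')$ is a bounded convex function, hence constant, and the Flat Strip Theorem produces the flat strip.

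For the parabolic cases, I would fix a minimizing sequence $p_n \in X$ with $d_\gamma(p_n) \to l_\gamma$. Because $d_\gamma$ is continuous and its infimum is not attained, $(p_n)$ leaves every compact subset of $X$, so after extraction $p_n \to \xi \in \partial X$. The bound $d(p_n, \gamma p_n) \to l_\gamma$ then forces $\gamma p_n \to \xi$ as well, whence $\gamma \xi = \xi$. To get invariance of horocycles at $\xi$, I use the identity $\beta_\xi(\gamma q, \gamma p) = \beta_\xi(q, p)$ (which holds because $\gamma \xi = \xi$) together with the Busemann cocycle to show that $\phi(p) := \beta_\xi(\gamma p, p)$ is independent of $p$. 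Evaluating at $p_n$ gives $|\phi| \le d(\gamma p_n, p_n) \to l_\gamma$. In the simple parabolic case $l_\gamma = 0$, so $\phi = 0$ and every horocycle at $\xi$ is preserved; uniqueness of the fixed point follows because any second fixed point $\eta$ at infinity would, via Proposition \ref{joining}, produce either an axis or a flat half-plane on which $\gamma$ would attain its displacement, contradicting the non-attainment assumption.

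The genuinely delicate case, and what I expect to be the main obstacle, is the exceptional parabolic one. I still need to show $\phi = 0$ so that every horocycle at $\xi$ is preserved, and that the interval $H_\xi(\infty) = [\xi_1, \xi_2]$ from Proposition \ref{horcycle_infinity} is nontrivial and fixed pointwise by $\gamma$. My plan is to exploit the minimizing sequence $(p_n)$: the fact that $d_\gamma(p_n) \to l_\gamma > 0$ while both $(p_n)$ and $(\gamma p_n)$ converge to $\xi$ with bounded pairwise distance should force an asymptotically flat geometry near $\xi$, from which one extracts a flat half-plane $P$ at $\xi$ bounded by a geodesic from $\xi_1$ to $\xi_2$. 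Once $P$ is produced, $\gamma$ must preserve it and act as a Euclidean isometry of the half-plane fixing $\xi$; since $\gamma$ is orientation-preserving with positive displacement and no fixed point in $X$, the only possibility is a translation parallel to the bounding geodesic. Such a translation fixes every point of $[\xi_1, \xi_2]$ and preserves every horocycle at $\xi$, giving both conclusions simultaneously. Making the flat half-plane construction rigorous — going from the asymptotic flatness along $(p_n)$ to an honest geodesic between $\xi_1$ and $\xi_2$ bounding a flat half-plane — is the step I expect to require the most care.
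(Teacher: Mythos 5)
Your dichotomy and the arguments for the elliptic, axial, and simple parabolic cases are essentially sound: the convexity of $d_\gamma$, the assembly of the axis from the segment $[p,\gamma p]$, the Flat Strip Theorem for two axes, and the cocycle computation showing $\phi(p) := \beta_\xi(\gamma p, p)$ is constant with $|\phi| \le l_\gamma$ are all standard and correct. But the exceptional parabolic case, which you flag as "the step requiring the most care", is not a gap that more care would close: the construction you propose is structurally impossible. You plan to extract a flat half-plane $P$ bounded by a geodesic from $\xi_1$ to $\xi_2$ and read off that $\gamma$ acts as a Euclidean translation. In the exceptional parabolic situation there is no geodesic joining $\xi_1$ to $\xi_2$ and no flat half-plane near $\xi$. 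Indeed, an exceptional parabolic end carries a collar metric $dr^2 + G(r,\theta)^2 d\theta^2$ with the length function $L(r)$ decreasing \emph{strictly} to $l>0$ (Theorem \ref{classification_ends}); the surface of revolution with $G(r)=l/(2\pi)+e^{-r}$ is such an end, and there $K=-G''/G<0$ everywhere in the horoball, so there is no flat strip of positive width, let alone a flat half-plane. Moreover Proposition \ref{horcycle_infinity}(4) says precisely that if $\xi_1,\xi_2$ \emph{were} joined by a geodesic, that geodesic would bound a flat half-plane --- but that is the cylindrical configuration, and there $\gamma$ would attain its displacement on the bounding geodesic and be axial, contradicting the standing assumption that $\inf d_\gamma$ is not attained. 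So the flat half-plane picture is exactly the one you must prove does \emph{not} arise here; producing it would refute the case you are in rather than establish its conclusion.

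What a correct treatment of the exceptional parabolic case must supply, without any flat piece, are the two facts your estimate $|\phi|\le l_\gamma$ does not give: that $\phi=0$ (so horocycles at $\xi$ are genuinely preserved, not merely displaced by at most $l_\gamma$), and that $\gamma$ fixes $H_\xi(\infty)$ pointwise rather than only setwise. For the latter a natural route is the Tits metric: $\gamma$ preserves Tits distance and fixes the endpoints $\xi_1,\xi_2$, which lie at finite Tits distance, and a Tits isometry of a one-dimensional boundary fixing both endpoints of a segment must fix it pointwise. Two further issues in your sketch are worth noting. A minimizing sequence with $l_\gamma>0$ need not converge to the center $\xi$ --- it can escape to $\xi_1$ or $\xi_2$ instead --- so the center has to be located by a separate argument rather than as the limit of $(p_n)$. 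And your uniqueness argument in the simple parabolic case only treats the subcase in which $\xi$ and the hypothetical second fixed point $\eta$ are joined by a geodesic; when they are not, Proposition \ref{joining} gives only $H_\xi(\infty)\cap H_\eta(\infty)\ne\emptyset$, which is not yet a contradiction and needs additional reasoning (in effect one must show $H_\xi(\infty)=\{\xi\}$ when $l_\gamma=0$).
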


If $\gamma$ is a parabolic isometry, the point $\xi$ given by the previous preposition is called the \emph{center} of $\gamma$.

\subsection{Limit set}
Let $\Gamma$ be a discrete group of isometries of $X$.
\begin{definition}
	The limit set $\Lambda$ of $\Gamma$ is the set of accumulation points in $ \partial X$ of an orbit $\Gamma p$, $p\in X$, i.e. 
	$$\Lambda = \overline{\Gamma p} \cap \partial X.$$
\end{definition}

We distinguish different types of limit points.

\begin{definition}
	Let $\xi\in \Lambda$ be a limit point. 
	\begin{itemize}
		\item We say that $\xi$ is \emph{radial} if there exists a sequence $\gamma_n$ in $\Gamma$ such that, for some $p\in X$ and some geodesic ray $c$ pointing to $\xi$, $\xi =\lim_{n}\gamma_n p$ and $d(\gamma_np,c)$ is bounded for all $n\in\mathbb{N}$.
		
		\item We say that $\xi$ is \emph{horocyclic} if there exists a sequence $\gamma_n$ in $\Gamma$ such that, for some $p\in X$, $\xi =\lim_{n}\gamma_n p$ and $\lim_n\beta_\xi(\gamma_np,p)=-\infty$.
		
		\item We say that $\xi$ is \emph{simple parabolic} if it is the fixed point of a simple parabolic isometry.
		
		\item We say that $\xi$ is \emph{exceptional parabolic} if it is the fixed point of an exceptional parabolic isometry.
	\end{itemize}
\end{definition}

Being a simple parabolic limit point is the same as being the center of a simple parabolic isometry, because the fixed point is unique.

\begin{lemma}\label{joining_limit}
	Two points $\eta $ and $\xi$ in $\partial X$ that are not joined by a geodesic bound an interval of $\partial X$ which is disjoint from the limit set $\Lambda$. In particular, for any $\zeta \in\partial X$, the limit set does not intersect the interior of $H_\zeta(\infty)$.
\end{lemma}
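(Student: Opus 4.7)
My plan is a proof by contradiction using Proposition \ref{joining} as the main tool. Assume $\eta,\xi\in\partial X$ are not joined by a geodesic, yet both open arcs $I_1,I_2$ of $\partial X\setminus\{\eta,\xi\}$ contain limit points $\zeta_1\in I_1\cap\Lambda$ and $\zeta_2\in I_2\cap\Lambda$. The objective is to produce a geodesic joining $\eta$ and $\xi$, contradicting the hypothesis.

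Fix a basepoint $p\in X$ and choose sequences $\alpha_n,\beta_n\in\Gamma$ with $\alpha_n p\to\zeta_1$ and $\beta_n p\to\zeta_2$. Let $\tilde c_n$ denote the bi-infinite geodesic through $\alpha_n p$ and $\beta_n p$, with ideal endpoints $u_n^\pm\in\partial X$. The first step is a cone-topology argument: parametrising $\tilde c_n$ by arc length, the fact that the two sequences $\alpha_n p,\beta_n p$ lying on $\tilde c_n$ tend to distinct boundary points forces their arc-length parameters to diverge to $\pm\infty$, so along a subsequence the endpoints satisfy $u_n^-\to\zeta_1$ and $u_n^+\to\zeta_2$.

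The central step is to show that $d(p,\tilde c_n)$ remains bounded. If so, Arzel\`a--Ascoli yields a bi-infinite geodesic $\tilde c$ joining $\zeta_1$ to $\zeta_2$ in $X$; by Proposition \ref{joining} item 2 the pair $\zeta_1,\zeta_2$ is joinable, and then using $\zeta_1,\zeta_2\in\Lambda$, a diagonal extraction among $\Gamma$-translates of $\tilde c$ should produce sequences $x_n\to\eta$ and $y_n\to\xi$ with the geodesic segments $[x_n,y_n]$ staying in a bounded neighbourhood of $p$, giving the contradicting geodesic via Proposition \ref{joining} item 2. If instead $d(p,\tilde c_n)\to\infty$, the foot-of-perpendicular $q_n\in\tilde c_n$ from $p$ satisfies $q_n\to\mu\in\partial X$ along a subsequence; using that $\overline X$ is a topological disc and $\tilde c_n$ is a simple arc whose endpoints tend to points in opposite arcs bounded by $\eta,\xi$, I expect the planarity to force $\mu\in\{\eta,\xi\}$, and then the sub-rays of $\tilde c_n$ based at $q_n$ yield the contradiction directly.

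The principal obstacle is the two-dimensional separation argument ruling out or exploiting the escape case, together with the diagonal-extraction step to pass from a geodesic joining $\zeta_1,\zeta_2$ to one joining $\eta,\xi$; the latter requires care since minimality of the $\Gamma$-action on $\Lambda$ is not available in general, and one should instead use that the pair $(\zeta_1,\zeta_2)$ lies in $\Lambda\times\Lambda$ away from the diagonal. Finally, the ``in particular'' statement follows from the main one applied to pairs $(\mu,\zeta_1')$ and $(\mu,\zeta_2')$ for $\mu\in\mathring{H_\zeta(\infty)}$ with $H_\zeta(\infty)=[\zeta_1',\zeta_2']$: by Proposition \ref{horcycle_infinity} item 4 neither pair is joined by a geodesic, and carefully identifying which of the resulting disjoint arcs lies around $\mu$ forces $\mu\notin\Lambda$.
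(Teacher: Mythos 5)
Your approach is fundamentally different from the paper's and, as it stands, it has serious gaps that I don't see how to fill.

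The paper's proof is a direct Gauss--Bonnet argument: pick any $p\in X$; the angle $\alpha = \angle_p(\eta,\xi)$ is strictly less than $\pi$ (otherwise $p$ would lie on a geodesic joining $\eta$ to $\xi$); the angular sector $D$ from $p$ towards $\eta$ and $\xi$ then has total curvature bounded by $\pi-\alpha$, by applying Gauss--Bonnet to the geodesic triangles exhausting $D$. Because $M$ is rank $1$ there is a point $q$ with $K(q)<0$, and a small ball $B(q,\varepsilon)$ has a definite amount of negative total curvature; hence only finitely many $\Gamma$-translates of $B(q,\varepsilon)$ can fit inside $D$, so $\Gamma q$ cannot accumulate in the interior of the arc $\overline D\cap\partial X$. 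Notice that the hypothesis of somewhere-negative curvature is used essentially. Your argument never invokes this hypothesis, which is already a warning sign: in a flat Hadamard plane the statement ``$\eta,\xi$ not joinable'' plus ``both arcs contain limit points'' is not, by itself, contradictory; what makes the lemma true in the rank $1$ setting is the curvature somewhere, and a proof that ignores it cannot be complete.

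Two specific gaps. First, in the escape case $d(p,\tilde c_n)\to\infty$ you assert that planarity forces the accumulation point $\mu$ of the feet $q_n$ to lie in $\{\eta,\xi\}$; this is not justified and in general false. If $\zeta_1,\zeta_2$ bound a flat sector (for instance the two endpoints of the geodesic bounding a flat half-plane), the geodesics $\tilde c_n$ can escape through that flat region, and $\mu$ will lie somewhere in the interior of the sector's ideal arc, far from $\eta$ and $\xi$. Second, the ``diagonal extraction'' from a geodesic joining $\zeta_1$ to $\zeta_2$ to one joining $\eta$ to $\xi$ does not work. The endpoints of $\Gamma$-translates of $\tilde c$ lie in $\Lambda$, whereas $\eta,\xi$ are not assumed to be in $\Lambda$ (and in the main application, the ``in particular'' for interior points of $H_\zeta(\infty)$, they are typically not); so no translate has endpoints converging to $(\eta,\xi)$. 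Even when $\eta,\xi\in\Lambda$, Proposition~\ref{joining} item 2 requires the segments $[x_n,y_n]$ to stay in a bounded neighbourhood of $p$, and you have no a priori bound: in fact the very hypothesis that $\eta,\xi$ are not joinable predicts that segments with endpoints near $\eta$ and $\xi$ will escape, so this step is circular. I would recommend abandoning the contradiction-via-joinability strategy and working directly with the curvature of a sector as the paper does.
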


\begin{proof}
	Assume that there is no geodesic joining $\eta $ to $\xi$. Let $p$ be any point in $X$. Necessarily, the non-oriented angle $\alpha$ from $p$ between $\eta $ and $\xi$ is stricly less than $\pi$. Let $D$ be the sector of $X$ delimited by the geodesics $c_{p,\eta }$ and $c_{p,\xi}$ with angle $\alpha$. For $t>0$, we consider the triangle $D_t$ with vertices $p$, $c_{p,\eta}(t)$ and $c_{p,\xi}(t)$. Let $\beta_t$ and $\gamma_t$ the angles of $D_t$ at $c_{p,\eta}(t)$ and $c_{p,\xi}(t)$, respectively. By the Gauss-Bonnet formula, the total curvature of $D_t$ is 
	\[
	-\int_{D_t } K \dd A = \pi -\alpha -\beta_t -\gamma_t \le \pi -\alpha .
	\]
	Proposition \ref{joining} implies that $D$ can be obtained as the union of $D_t$ for $t>0$. By monotone convergence, the total curvature of $D$ is the limit of the total curvatures of $D_t$ when $t$ tends to infinity. We conclude that the total curvature of $D$ is bounded.
	
	
	The intersection of the closure of $D$ in $X\cup \partial X$ with $\partial X$ is a closed interval of $\partial X$. We take $I$ as the interior of this interval. Let $q\in X$ be a point where the curvature is negative and let $\varepsilon>0$. Since the total curvature of $D$ is bounded, there is a finite number of $\gamma\in \Gamma$ such that $\gamma B(q,\varepsilon) \subset D$. This implies that no point in $I$ is an accumulation point of $\Gamma q$.
	
	For the second part, since no two points in the interior of $H_\zeta(\infty)$ can be joined by a geodesic, this set cannot intersect $\Lambda$.
\end{proof}

Let us describe more precisely exceptional parabolic limit points. By Proposition \ref{isometries}, given an exceptional parabolic isometry $\gamma$ with center $\xi$, the fixed point set coincides with the interval $H_\xi(\infty)=[\xi_1,\xi_2]$. Both points $\xi_1$ and $\xi_2$ are exceptional limit points, in fact, they are the limits of $\gamma^n p $ and $\gamma^{-n} p$, $p\in X$, when $n$ tends to infinity. Moreover, by Lemma \ref{joining_limit}, they are the only limit points in the interval $[\xi_1,\xi_2]$. This shows that every exceptional parabolic isometry $\gamma$ has exactly two associated exceptional limit points.
  We ignore wether the center $\xi$ of the isometry $\gamma$ can coincide with one of the points $\xi_1$ and $\xi_2$.


An extremity of an interval in $\partial X\setminus \Lambda $ corresponding to a cylindrical or an expanding end is a radial limit point.

\begin{lemma}\label{intersection_types_limit}
	\begin{enumerate}
		\item Every radial point is horocyclic. 
		\item Limit points that are the center of a parabolic isometry are not horocyclic.
		\item Exceptional parabolic limit points which are not the center of the associated parabolic isometry are horocyclic but not radial.
	\end{enumerate}
\end{lemma}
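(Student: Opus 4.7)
The three items will be addressed separately.

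For item 1, given a radial sequence $\gamma_n p \to \xi$ with $d(\gamma_n p, c) \le C$ for some geodesic ray $c$ pointing to $\xi$, let $c(t_n)$ realize the distance from $\gamma_n p$ to $c$. Since the tube of radius $C$ about $c$ is proper and $\gamma_n p \to \xi \in \partial X$, necessarily $t_n \to \infty$. Combining the linear decay $\beta_\xi(c(t),p) = \beta_\xi(c(0),p) - t$ with the $1$-Lipschitz property of Busemann functions yields $\beta_\xi(\gamma_n p, p) \to -\infty$, so $\xi$ is horocyclic.

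For item 2, suppose $\xi$ is the center of a parabolic isometry and, toward a contradiction, assume there is a horocyclic sequence $\gamma_n p \to \xi$ with $\beta_\xi(\gamma_n p, p) \to -\infty$. By Proposition \ref{isometries}, every parabolic element of the stabilizer $\Gamma_\xi$ preserves all horocycles at $\xi$; in the nonelementary geometrically finite setting $\Gamma_\xi$ contains only parabolic and finite-order isometries, so $\beta_\xi(\sigma q, p) = \beta_\xi(q, p)$ for every $\sigma \in \Gamma_\xi$ and every $q \in X$. The plan is then to invoke the standard fact that $\xi$ is a bounded parabolic point, producing a compact fundamental domain $K$ for $\Gamma_\xi$ on a deep horoball at $\xi$. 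Choosing $\sigma_n \in \Gamma_\xi$ with $\sigma_n \gamma_n p \in K$, the Busemann values $\beta_\xi(\sigma_n \gamma_n p, p) = \beta_\xi(\gamma_n p, p)$ still tend to $-\infty$, forcing the $\sigma_n \gamma_n p$ to escape $K$ along the horoball direction and contradicting discreteness of $\Gamma p$.

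For item 3, let $\gamma$ be an exceptional parabolic isometry with center $\eta$ and fixed interval $[\xi_1, \xi_2] = H_\eta(\infty)$, and consider $\xi_1 \ne \eta$. To show $\xi_1$ is horocyclic, I examine the orbit $\gamma^n p$. Since $\gamma$ preserves every horocycle at $\eta$, this orbit lies on $H_\eta(p)$, which accumulates exactly at $\xi_1$ and $\xi_2$ by Proposition \ref{horcycle_infinity}(6); the discrete translation-like action of $\gamma$ on $H_\eta(p)$ forces $\gamma^n p$ to converge to one endpoint, say $\xi_1$. Since $\gamma$ fixes $\xi_1$, the cocycle identity gives $\beta_{\xi_1}(\gamma^n p, p) = n a$ with $a := \beta_{\xi_1}(\gamma p, p)$. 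One has $a \ne 0$ (else $\gamma$ would also preserve all horocycles at $\xi_1 \ne \eta$, contradicting uniqueness of the center in Proposition \ref{isometries}(4)) and $a \le 0$ (else $\beta_{\xi_1}(\gamma^{-n}p, p) = -na \to -\infty$ would force $\gamma^{-n}p \to \xi_1$, conflicting with the horocycle dynamics which send $\gamma^{-n}p$ to the opposite endpoint $\xi_2$); hence $a < 0$ and $\xi_1$ is horocyclic.

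Finally, to show $\xi_1$ is not radial, I assume for contradiction a radial sequence $\gamma_n p \to \xi_1$ with $d(\gamma_n p, c) \le C$ for a ray $c$ to $\xi_1$. The key geometric input is that $\xi_1$ and $\xi_2$ are not joined by a geodesic: otherwise Proposition \ref{horcycle_infinity}(4) would produce a flat half-plane on which $\gamma$ would act as a translation with the boundary geodesic as axis, making $\gamma$ axial rather than exceptional parabolic. As in the proof of Lemma \ref{joining_limit}, the sector bounded by the rays from $p$ to $\xi_1$ and to $\xi_2$ then has bounded total curvature; combined with the bounded-parabolic property at $\eta$, this restricts the tube of radius $C$ around $c$ to meet only finitely many $\Gamma_\eta$-cosets of orbit points of $p$. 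Since $\Gamma_\eta$ preserves horocycles at $\eta$ rather than at $\xi_1$, the resulting accumulation pattern is along a horocycle at $\eta$, which is incompatible with tube-bounded convergence to $\xi_1$. Making this Margulis-type finiteness rigorous in nonpositive curvature, without a uniform thick-thin decomposition, is the step I expect to be the main obstacle.
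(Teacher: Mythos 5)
Your proof of item 1 matches the paper's argument (one-Lipschitz Busemann plus the escape of the projection parameter $t_n$), so there is nothing to flag there.

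For item 2, however, your argument does not close. You invoke a ``compact fundamental domain $K$ for $\Gamma_\xi$ on a deep horoball at $\xi$'' and hope to contradict discreteness of $\Gamma p$. But a parabolic stabilizer never has a compact fundamental domain on a full horoball: the quotient of the horoball is still a full neighborhood of the end, extending infinitely toward $\xi$. What bounded parabolicity (which itself is a theorem in pinched negative curvature, not a free given in nonpositive curvature) furnishes is compactness transverse to the horoball depth, not in the depth direction. After you replace $\gamma_n p$ by $\sigma_n\gamma_n p$ with the same Busemann value $\to -\infty$, the points simply escape toward $\xi$ while staying over $K$; the orbit of $p$ is allowed to accumulate at $\xi$ at infinity, so no contradiction with discreteness arises. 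The paper's argument is both shorter and does not need bounded parabolicity: from Proposition~\ref{parabolic_ngh} there is a horoball $B$ at $\xi$ with $\gamma B = B$ or $\gamma B \cap B = \emptyset$ for all $\gamma\in\Gamma$, so for $p\in\partial B$ one has $\beta_\xi(\gamma p,p)\ge 0$ for every $\gamma\in\Gamma$, ruling out the defining sequence of a horocyclic point directly.

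For item 3, the ``horocyclic'' half of your argument follows essentially the same route as the paper (cocycle identity $\beta_{\xi_1}(\gamma^n p,p)=na$, then show $a<0$). Be careful with your sign argument though: in nonpositive curvature $\beta_{\xi_1}(q_n,p)\to-\infty$ does \emph{not} by itself force $q_n\to\xi_1$, since $H_{\xi_1}(\infty)$ may be a nondegenerate interval; the paper sidesteps this by pinning down that $\gamma_0^n p$ stays inside a fixed horoball $B'$ centered at $\xi_1$ using the half-horocycle intersection from Proposition~\ref{joining}, which yields $na\le 0$ without any limiting argument. Finally, you explicitly flag the ``not radial'' part as incomplete, and indeed this is a genuine gap. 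The paper does not attempt a Margulis-type thick--thin argument at all: it proves a separate geometric fact, Lemma~\ref{distance_geod_horo}, showing that the geodesic ray to $\xi_1$ leaves the bounding horocycle at the center $\eta$ farther and farther behind, while the whole orbit $\Gamma p$ is confined to $\partial B$ or the exterior of $B$ by the precise invariance; no $\gamma_n p$ can then stay within bounded distance of the ray. You should replace your Margulis-type finiteness sketch by this distance-divergence argument (or an equivalent).
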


\begin{proof}
	
	\begin{enumerate}
		\item Let $\gamma_n$ be a sequence in $\Gamma$ such that $\lim_n \gamma_n p=\xi$ and $d(\gamma_n p, c)\le R$, where $c$ is the geodesic ray from $p$ to $\xi$. 
		Let $c(t_n)$ be the point in $c$ closest to $\gamma_n p$. Since $\gamma_n p$ converges to $\xi$, $t_n$ goes to $+\infty$. We have
		$$\beta_\xi(\gamma_n p , p)= \beta_\xi(\gamma_n p , c(t_n)) + \beta_\xi( c(t_n), p)\le R -t_n, $$ which shows that $ \lim_n \beta_\xi(\gamma_n p , p)=-\infty$.
		
		\item Let $\xi $ be the center of a parabolic isometry $\gamma_0$. By a result of Eberlein (see Proposition \ref{parabolic_ngh}), there exists a horoball $B$ centered at $\xi$ which satisfies for every $\gamma \in \Gamma$ either $\gamma B=B$ or $\gamma B\cap B=\emptyset $. This implies that for any point $p$ in the boundary of $B$, $\beta_\xi(\gamma p, p )\ge 0$. So $\xi$ cannot be horocyclic.
		
		\item Let $\xi$ be an exceptional parabolic limit point. Necessarily, $\xi$ is the extremity of an open interval $I$ in $\partial X\setminus \Lambda$ and there exists an exceptional parabolic isometry $\gamma _0$ which fixes every point in the closure of the interval $I$. The center of $\gamma_0$ is different from $\xi$ by hypothesis. As before, there exists a horoball $B$ centered at $\xi$ which satisfies for every $\gamma \in \Gamma$ either $\gamma B=B$ or $\gamma B\cap B=\emptyset $. Let $p$ be any point in the boundary of $B$ an consider the horoball $B'$ centered at $\xi$ whose boundary contains $p$. The horoball $B'$ intersects the horocycle $\partial B$ in a half-horocycle by Propostion \ref{joining}.
		
		Up to taking the inverse we know that $\gamma_0^ n p  $, $n\in \naturals$, stays in the horoball $B'$ and $\lim_n \gamma_0^ n p = \xi$. We have
		$$\beta_\xi(\gamma_0^ n p , p )= n \beta_\xi(\gamma_0 p , p )
		$$
		because $\gamma_0$ fixes $\xi$.
		Since $\gamma_0$ does not fix the horocycles centered at $\xi$, we must have $\beta_\xi(\gamma_0 p , p )<0$. This implies that $\lim _n \beta_\xi(\gamma_0^ n p , p )=-\infty$, showing that $\xi$ is horocyclic.
		
		Let us explain why $\xi$ is not radial. The orbit $\Gamma p$ is contained in the union of the boundary of $B$ with its complement. Let $c$ be the geodesic ray starting at $p$ pointing to $\xi$. This ray is included in the horoball $B$. We will show that the distance between $c(t)$ and $\partial B$ goes to infinity, when $t\to +\infty$ in Lemma \ref{distance_geod_horo} below. The same will be true for any geodesic ray asymptotic to $c$. So there is no sequence $\gamma_n p$ converging to $\xi$ which stays within a bounded distance from a ray pointing to $\xi$.  
	\end{enumerate}

\end{proof}

\begin{lemma}\label{distance_geod_horo}
	Let $\gamma $ be an exceptional parabolic isometry with center $\xi$. Let $H_{\xi}(\infty)=[\xi_1,\xi_2]$. Let $c$ be a geodesic ray pointing to $\xi_1$ and let $H$ be the horocycle centered at $\xi$ and passing through $c(0)$. Then
	$$\lim_{t\to +\infty} d(c(t), H) =+\infty.
	$$
\end{lemma}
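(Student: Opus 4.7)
The plan is to argue by contradiction. Assume there exist $R>0$ and a sequence $t_n\to+\infty$ with $d(c(t_n),H)\le R$; I aim to produce a geodesic joining $\xi_1$ to $\xi_2$ in $X$ and then contradict the exceptional parabolic character of $\gamma$. The case $\xi_1=\xi_2$ is trivial: then $\xi_1=\xi$ is the center, so $\beta_\xi(c(t),c(0))=-t$, and since Busemann functions are $1$-Lipschitz this gives $d(c(t),H)\ge t$.

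Assume now that $\xi_1\ne\xi_2$ and pick $p_n\in H$ with $d(c(t_n),p_n)\le R$. By Proposition~\ref{horcycle_infinity}(6), $H$ accumulates in $\partial X$ only at $\xi_1$ and $\xi_2$, so $c(t_n)\to\xi_1$ forces $p_n\to\xi_1$ along $H$. The exceptional parabolic $\gamma$ preserves $H$, and $\gamma|_H$ is a fixed-point-free isometry of a curve isometric to $\mathbb{R}$, hence a translation by some horocyclic arclength $\tau>0$. After possibly replacing $\gamma$ by $\gamma^{-1}$, the orbit $\{\gamma^k c(0)\}_{k\in\mathbb{Z}}$ is $\tau$-spaced on $H$ with $\gamma^k c(0)\to\xi_1$ as $k\to+\infty$. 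Matching each $p_n$ with an iterate $\gamma^{k_n}c(0)$ at horocyclic arclength distance $\le\tau$, and using that $d_X$ is bounded above by arclength along $H$, we get $d(p_n,\gamma^{k_n}c(0))\le\tau$ and hence $d(c(t_n),\gamma^{k_n}c(0))\le R+\tau$ with $k_n\to+\infty$.

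Applying the isometry $\gamma^{-k_n}$ gives $d(\gamma^{-k_n}c(t_n),c(0))\le R+\tau$, where $\gamma^{-k_n}c(t_n)$ lies on the geodesic ray $\gamma^{-k_n}c$ that ends at $\xi_1$ and starts at $\gamma^{-k_n}c(0)\to\xi_2$. A diagonal choice $y_n=(\gamma^{-k_n}c)(S_n)$ with $S_n\ge t_n$ large enough makes $y_n\to\xi_1$ in the cone topology, while the segment from $\gamma^{-k_n}c(0)$ to $y_n$ still passes within $R+\tau$ of $c(0)$. The equivalence (1)$\Leftrightarrow$(2) in Proposition~\ref{joining} then yields a geodesic $\sigma\subset X$ joining $\xi_2$ to $\xi_1$. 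By Proposition~\ref{horcycle_infinity}(4), $\sigma$ bounds a flat half-plane $P$. Since $\gamma$ fixes $\xi_1$ and $\xi_2$, it preserves $P$ and acts there as a translation parallel to $\sigma$ by some displacement $l>0$; in particular $d_\gamma\equiv l$ on $P$, so $P/\langle\gamma\rangle$ projects to a flat half-cylinder in $M$, contained in a cylindrical end.

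The main obstacle is this last step: the existence of a $\gamma$-invariant flat half-plane with $d_\gamma\equiv l$ does not by itself contradict $\gamma$ being parabolic, as it only gives $l_\gamma\le l$. One must invoke the classification of ends of geometrically finite nonpositively curved surfaces \cite{Eberlein79,LinkPeignePicaud} to conclude that the cyclic stabilizer of the cylindrical end produced by $P$ is generated by an axial element; since $\gamma$ fixes both endpoints of $\sigma$, it lies in this cyclic stabilizer, so some power of an axial element equals $\gamma$, forcing $\gamma$ itself to be axial and contradicting the hypothesis that $\gamma$ is parabolic.
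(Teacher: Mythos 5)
Your approach is genuinely different from the paper's. The paper works with the convex function $f(t)=\beta_\xi(c(t),c(0))$: if $f$ had a finite limit $a$, the ray would accumulate (in $M$) onto the projection of the horocycle at level $a$, which by continuity of the geodesic flow would then be a closed geodesic, hence bound a flat cylinder, contradicting the fact that it bounds an exceptional parabolic end. You instead exploit the $\gamma$-translation along $H$ to convert bounded excursions of $c$ near $H$ into a geodesic joining $\xi_1$ to $\xi_2$, and then aim at a contradiction with $\gamma$ being parabolic. That part of the argument (matching $p_n$ to iterates $\gamma^{k_n}c(0)$, applying $\gamma^{-k_n}$, and the diagonal choice of $S_n$ to feed Proposition~\ref{joining}(1)$\Leftrightarrow$(2)) is correct and a legitimate alternative to the paper's proof.

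However, the ``main obstacle'' you identify at the end is not an obstacle, and the fix you propose is both unnecessary and not rigorous as written. Once you have a geodesic $\sigma$ from $\xi_1$ to $\xi_2$, note that $\gamma$ fixes both endpoints, hence preserves $\sigma$, and since $\gamma$ is orientation-preserving and has no fixed points in $X$, it translates $\sigma$ by some $l>0$. But an isometry of a CAT(0) space that translates a geodesic line nontrivially is automatically axial: the nearest-point projection $\pi_\sigma$ onto $\sigma$ is $1$-Lipschitz and $\gamma$-equivariant, so $d_\gamma(q)\ge d(\pi_\sigma(q),\gamma\pi_\sigma(q))=l$ for every $q$, whence $l_\gamma=l$ is attained on $\sigma$ (see, e.g., Bridson--Haefliger, II.6.2). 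This directly contradicts $\gamma$ being parabolic, and you never needed the flat half-plane $P$ or the remark that $d_\gamma\equiv l$ on it. The detour you propose instead --- arguing that $P/\langle\gamma\rangle$ sits in a cylindrical end whose cyclic stabilizer is generated by an axial element --- has real gaps: it is not established that $P/\langle\gamma\rangle$ embeds in $M$ (you would need $\stab_\Gamma(\sigma)=\langle\gamma\rangle$), nor that the end determined by $(\xi_1,\xi_2)$ is cylindrical; in fact by the paper's own correspondence that end is exceptional parabolic, which is another route to the contradiction but is not what you wrote. Replace the last paragraph with the one-line observation above and the proof is clean.
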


\begin{proof}
	We observe that $c$ stays inside the horoball bounded by $H$ and does not intersect $H$ for positive time. Otherwise, $c$ and $H$ would coincide and they would bound a flat-half plane. 
	
	Consider the convex function $f$ defined by $f(t):=\beta_\xi (c(t), c(0) )$. We observe that 
	$$
	d(c(t),H)=|f(t)|
	$$
	 It satisfies $f(0)=0$ and $f'(0)<0$. By the observation made above, $f$ does not vanish again for $t>0$. By the convexity it is non-increasing. Let $a =\lim_{t\to +\infty} f(t)\in [-\infty,0)$. We show that $a=-\infty$ to finish the proof. Assume that $a$ is finite and we reason by contradiction. We consider the horocycle $H'$ centered at $\xi$ such that $\beta_\xi(H',H)= a$. Then $d(c(t), H')= f(t)-a\to 0$ when $t\to +\infty$. In the quotient $M$, $H'$ projects to a periodic horocycle $\overline{H'}$ and $c$ projects to a geodesic ray that acumulates to $\overline{H'}$. By the continuity of the geodesic flow, $\overline{H'}$ is also a geodesic. But this can only happen if $\overline{H'}$ bounds a flat cylinder. This contradicts the fact that $\overline{H'}$ bounds an exceptional parabolic end.

\end{proof}

\subsection{Gromov product}

We introduce the Gromov product for certain pairs of points at infinity. We will use it as notation in some proofs.

\begin{definition}
	Let $\eta$ and $\xi$ be two points in $\partial X$ that are joined by a geodesic $c$. Choose any point $p_{\eta, \xi} $ on $c$. We define the \emph{Gromov product} of $\xi $ and $\eta$ at $x\in X$ as
	$$ \langle\eta , \xi \rangle_x :=\frac{1}{2}( \beta_{\xi}(x,p_{\eta,\xi})+\beta_{\eta}(x,p_{\eta,\xi})). $$
\end{definition}

One can show that the previous definition does not depend on the choice of the geodesic $c$ nor the choice of the point $p_{\eta,\xi}$ on $c$. A continuous Gromov product with values in $\reals\cup\{+\infty\}$ can be defined for any pair of points in the limit set $\Lambda$ \cite[Théorème B]{LinkPeignePicaud}.

\subsection{Ends of nonpositively curved surfaces}

\begin{definition}
	An \emph{end} of a surface $M$ is a map $\varepsilon$ which, to each compact subset $K$ of $M$, associates a connected component of $M\setminus K$ with the property that, for any two compact subsets $K_1\subset K_2$, $\varepsilon(K_2)\subset \varepsilon(K_1)$.
	A subset $U\subset M$ is a \emph{neighborhood of} $\varepsilon$ if, for some compact subset $K$, $U$ contains $\varepsilon(K)$.
	A sequence of points or curves $(c_n)_n$ \emph{converges to the end} $\varepsilon$ if, for any neighborhood $U$ of $\varepsilon$, $c_n$ is contained in $U$ for $n$ large enough.
\end{definition}

We notice that we can associate an end $\varepsilon$ of $M$ to each divergent geodesic ray $c:\reals_+\to M$ by choosing as $\varepsilon(K)$ the connected component of $M\setminus K$ where $c([T,\infty))$ is contained for $T$ large enough. We will also say that $c$ converges to the end $\varepsilon$.

The simplest topology that an end can have is that of a cylinder.

\begin{definition}
	An end $\varepsilon$ is \emph{collared} if it has a neighborhood homeomorphic to a cylinder $S^1 \times (0,+\infty)$.
\end{definition}

Eberlein described the metric of collared ends.

\begin{theorem} \label{coordinates-nbgh}
	Let $\varepsilon$ be a collared end of $M$. Then there exist an open neighborhood $U$ of $\varepsilon$ with local $C^ 1$ coordinates
	$
	(r,\theta )\in (0,+\infty)\times S^ 1
	$ such that the metric in $U$ has the form 
	\begin{equation}\label{tubular_parametrization}
		dg^ 2=dr^ 2+ G(r,\theta)^ 2 d\theta^ 2,
	\end{equation}
	where $G: (0,+\infty)\times S^ 1\to (0,+\infty)$ is a continuous function with $G(\cdot , \theta)$ twice differentiable and convex, for each $\theta \in S^ 1$.
	
	Moreover, if there exists a sequence of continuous picewise smooth curves $(c_n)_n$ converging to an end $\varepsilon'$ such that $(c_n)_n$ belong to the same nontrivial free homotopy class, then $\varepsilon' $ is collared.
\end{theorem}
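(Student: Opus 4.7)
The plan is to work in the universal cover $X$ and exploit the cyclic subgroup $\langle\gamma\rangle\subset\Gamma$ corresponding to the collared end $\varepsilon$. A cylindrical neighborhood $S^1\times(0,+\infty)$ of $\varepsilon$ has infinite cyclic fundamental group, so a loop generating it lifts to a nontrivial isometry $\gamma\in\Gamma$. Since the loop is essential, $\gamma$ is either axial or parabolic by Proposition \ref{isometries}.

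The next step is to build a $\gamma$-invariant family of closed convex curves in a neighborhood of $\varepsilon$ in $X$ and use the orthogonal geodesic flow to set up Fermi coordinates. In the axial case, pick an axis $c$ of $\gamma$, or when the axes form a flat strip one of its boundary geodesics on the side of the end, and use the family of parallels obtained by flowing along the unit normal $N$ to $c$ into the end. In the parabolic case, use the horocycles centered at the fixed point $\xi$ of $\gamma$, which are all $\gamma$-invariant by Proposition \ref{isometries}. In either case, parametrize the reference curve by arc length $\theta$ taken modulo its $\gamma$-period, and let $r$ be the signed orthogonal distance into the end. Because there are no conjugate points in nonpositive curvature, the map $(r,\theta)\mapsto \exp_{c(\theta)}(rN(\theta))$ is a $C^1$-diffeomorphism from $(0,+\infty)\times\reals$ onto an invariant neighborhood of $\varepsilon$ in $X$; passing to the quotient by $\langle\gamma\rangle$ produces the coordinates on the neighborhood $U$ of $\varepsilon$ in $M$.

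In these Fermi coordinates, the fact that $\partial_r$ is a unit geodesic vector field orthogonal to each curve $\{r=\mathrm{const}\}$ forces the metric to split as $dg^2=dr^2+G(r,\theta)^2\,d\theta^2$, with $G(r,\theta)$ equal to the norm of the Jacobi field along $r\mapsto \exp_{c(\theta)}(rN(\theta))$ having the natural initial data tangent to the reference curve. The Jacobi equation $G''=-KG$ combined with $K\le 0$ and $G>0$ then yields convexity of $r\mapsto G(r,\theta)$. I expect the main difficulty to be a uniform treatment of the axial and parabolic cases while only asking for $C^1$ regularity transverse to the radial direction: genuine loss of smoothness can occur along the boundary of a flat strip of axes or along the Tits interval fixed by an exceptional parabolic element, which is exactly why the theorem only claims $C^1$ coordinates with $G(\cdot,\theta)$ twice differentiable and convex in $r$.

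For the second statement, the curves $c_n$ lie eventually in any chosen neighborhood of $\varepsilon'$ and all represent a fixed nontrivial free homotopy class $[\gamma]$. Replacing each $c_n$ by a simple closed curve in its homotopy class, which is possible on a surface when $[\gamma]$ is primitive, and invoking standard surface topology, the component of $M$ on the $\varepsilon'$-side of $c_n$ has fundamental group contained in $\langle[\gamma]\rangle$ and containing $[\gamma]$, hence infinite cyclic, so it is homeomorphic to a half-open cylinder. Thus $\varepsilon'$ admits a neighborhood homeomorphic to $S^1\times(0,+\infty)$ and is collared.
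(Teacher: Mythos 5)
This theorem is quoted from Eberlein's work \cite{Eberlein79} (and the refinement in \cite{LinkPeignePicaud}) without a proof in the paper, so there is no internal argument to compare against; I evaluate your proposal on its own merits.

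For the first part, your Fermi-coordinate construction is essentially the standard (and almost certainly Eberlein's) approach: lift to the universal cover, identify the deck transformation $\gamma$ of the collar as axial or parabolic, flow orthogonally from an axis or a horocycle, and read off $G$ as the Jacobi-field norm so that $G''=-KG\ge 0$ gives convexity in $r$. Two things deserve a line but are not fatal: you need to say why the normal exponential map is injective onto a neighborhood of the end (in the axial case, the axis is a minimizing geodesic bounding the half-plane $\tilde U$; in the parabolic case the Busemann function is strictly monotone along the flow), and you silently use that the collar loop represents a nontrivial element of $\Gamma$, which is true for the nonelementary surfaces of the paper but is not part of the stated hypotheses. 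Your explanation of why the statement only asserts $C^1$ regularity is heuristic but in the right spirit.

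The second part is where there is a genuine gap. You propose to replace each $c_n$ by a simple closed curve in the same free homotopy class and then claim that the $\varepsilon'$-side component of its complement has fundamental group inside $\langle[\gamma]\rangle$. Neither step is justified. Not every primitive free homotopy class on a surface is represented by a simple closed curve, and even when it is, a single simple representative $\sigma$ of $[\gamma]$ does not inherit the property of going to $\varepsilon'$, so you lose the sequence. Moreover, cutting along a simple separating curve on the $\varepsilon'$-side gives no a priori control on the fundamental group of that side; there could in principle be genus or further ends accumulating at $\varepsilon'$, and ruling this out is precisely the content of the statement. The way this is actually handled is geometric rather than purely topological: one lifts the $c_n$ to arcs in $X$ from $p_n$ to $\gamma p_n$, distinguishes whether $\gamma$ is axial or parabolic, and uses the precisely invariant horoball or half-plane attached to $\gamma$ (in the spirit of Propositions~\ref{parabolic_ngh} and~\ref{funnel_ngh}) to show that, once the $c_n$ have left a large enough compact set, they must lie in that precisely invariant region, whose quotient is a cylinder bounding $\varepsilon'$. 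Your current argument does not get there.
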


In a parametrization of a collared end as above, the $r$-curves are minimizing geodesic rays. The curvature $K$ at the point with coordinates $(r,\theta)$ is equal to \[
K(r,\theta)=- G(r,\theta)^ {-1} \frac{\partial^ 2 G}{\partial r^ 2}(r,\theta).
\]
When the function $G$ does not depend on the coordinate $\theta$, we obtain a surface of revolution. In order to distinguish different behaviors of the ends, we consider the function $L:(0,+\infty)\to (0,+\infty)$ giving the length of the corresponding $\theta$-curve, that is,
\[
L(r)=\int_{S^1 } G(r,\theta)\, \dd \theta,
\]
For an end $\varepsilon$, we denote the set of vectors in $T^1 M$ whose positive geodesic ray converges to $\varepsilon$ by $V(\varepsilon)$.

The following result derives from the equivalence of the two classifications of collared ends proposed by Eberlein, and it was proved by Link, Peigné and Picaud.

\begin{theorem} \cite{LinkPeignePicaud} \label{classification_ends}
	Let $\varepsilon$ be a collared end of a nonelementary nonpositively curved surface $M$. Then $\varepsilon$ satisfies one of the following:

	\begin{enumerate}
		\item (simple parabolic) The set $V(\varepsilon) $ has empty interior and there exists a sequence of smooth curves $(c_n)_n$ converging to $\varepsilon$ with uniformly bounded length. For any parametrization of a neighborhood of $\varepsilon$ of the form (\ref{tubular_parametrization}), the associated function $L$ decreases strictly to $0$.
		\item (exceptional parabolic) The set $V(\varepsilon) $ has nonempty interior but it is not open and there exists a sequence of smooth curves $(c_n)_n$ converging to $\varepsilon$ with uniformly bounded length. For any parametrization of a neighborhood of $\varepsilon$ of the form (\ref{tubular_parametrization}), the associated function $L$ decreases strictly to some $l>0$.
		\item (cylindrical) The set $V(\varepsilon) $ is open and there exists a sequence of smooth curves $(c_n)_n$ converging to $\varepsilon$ with uniformly bounded length. For any parametrization of a neighborhood of $\varepsilon$ of the form (\ref{tubular_parametrization}), the associated function $L$ is constant for $r\ge r_0$.
		\item (expanding) Any sequence of smooth curves $(c_n)_n$ converging to $\varepsilon$ has unbounded length. The set $V(\varepsilon)$ is open. For any parametrization of a neighborhood of $\varepsilon$ of the form (\ref{tubular_parametrization}), the associated function $L$ tends to $+\infty$.
	\end{enumerate}
Moreover, when $\varepsilon $ is parabolic or cylindrical, the $\theta $-curves of the parametrization of a neighborhood of $\varepsilon$ can be choosen to be closed horocycles. If the end $\varepsilon$ is expanding, $U$ can be chosen so that its boundary $\partial U$ is a closed geodesic.
\end{theorem}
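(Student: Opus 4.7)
The plan is to organize everything around the convex function $L:r\mapsto\int_{S^1}G(r,\theta)\,\dd\theta$, whose convexity follows pointwise from that of $G(\cdot,\theta)$ stated in Theorem \ref{coordinates-nbgh}. Since $L$ is convex on $(0,+\infty)$ and bounded below by zero, its behavior as $r\to+\infty$ falls into exactly three mutually exclusive regimes: $L(r)\to+\infty$, $L$ eventually constant, or $L$ strictly decreasing to a limit $l\ge 0$. I would identify these with the expanding, cylindrical, and parabolic cases respectively, with $l=0$ producing simple parabolic and $l>0$ producing exceptional parabolic. In the cylindrical regime, I would further observe that constancy of $L$ combined with the convexity of each $G(\cdot,\theta)$ forces $\partial_r G(r,\theta)=0$ pointwise for $r\ge r_0$: since $\partial_r G(\cdot,\theta)$ is non-decreasing and $\int\partial_r G\,\dd\theta=0$, positivity of $G$ rules out sign changes, so the metric is a flat product on $[r_0,+\infty)\times S^1$.

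The bounded-length sequences are then immediate in the three bounded-$L$ cases: take $c_n$ to be the $\theta$-curve at $r=r_n\to+\infty$, of length $L(r_n)$. Conversely, in the expanding regime, any continuous piecewise smooth curve converging to $\varepsilon$ must eventually lie in the collar $U$ and be freely homotopic there to a $\theta$-curve; projecting onto the $\theta$ factor and using the product form of the metric, its length is bounded below by the infimum of $L$ over the range of $r$-values it meets, and therefore diverges. For $V(\varepsilon)$ in the expanding case, a Jacobi field comparison along the radial geodesic $r\mapsto(r,\theta_0)$, together with the growth of $G$, shows that small angular perturbations of the velocity yield geodesics still escaping to $\varepsilon$, so $V(\varepsilon)$ is open. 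In the cylindrical case, geodesics that enter the flat part with nonzero radial component escape along straight lines, which again gives $V(\varepsilon)$ open; the existence of a closed geodesic bounding $U$ in the expanding case follows by minimizing length in the free homotopy class of a $\theta$-curve at some $r_0$, using $L(r)\to+\infty$ to confine the minimizer.

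For the parabolic cases I would use that the end corresponds to a fixed point $\xi$ of a parabolic isometry $\gamma_0\in\Gamma$ and that only vectors asymptotic to $\xi$ converge to $\varepsilon$. In the simple case the set of such vectors at a fixed base point is one-dimensional, so $V(\varepsilon)$ has empty interior. In the exceptional case, Proposition \ref{isometries} provides a nontrivial fixed interval $H_\xi(\infty)=[\xi_1,\xi_2]$, and vectors pointing into the interior of this interval parametrize a flat sector projecting into $V(\varepsilon)$, supplying interior points; non-openness is obtained by exhibiting, near the extremal directions $\xi_1$ and $\xi_2$, sequences of vectors whose geodesics leave the end, via Lemma \ref{distance_geod_horo}. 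The main obstacle will be precisely this exceptional parabolic case, where the simultaneous presence of interior points and the failure of openness requires a careful study of geodesics asymptotic to $\xi_1$ and $\xi_2$ and of the horocyclic convexity at $\xi$. The final \emph{moreover} clause, that in the parabolic and cylindrical cases the $\theta$-curves can be chosen as closed horocycles, follows by noting that in the chosen coordinates the $\theta$-curves are orthogonal to the radial minimizing geodesics, which point to $\xi$, so they are exactly the horocycles centered at $\xi$.
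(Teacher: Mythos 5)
This theorem is cited to Link, Peigné and Picaud; the paper does not prove it, so there is no internal proof against which to compare your proposal, and I am evaluating its internal logic.

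Organizing the case split around the convexity of $L(r)=\int_{S^1}G(r,\theta)\,\dd\theta$ is the right skeleton, and your observations that constancy of $L$ together with $\partial_r^2 G\ge 0$ forces $\partial_r^2 G\equiv 0$, hence affine $G(\cdot,\theta)$, and then positivity of $G$ forces $\partial_r G(\cdot,\theta)\equiv 0$ on $[r_0,\infty)$, is correct. The ``moreover'' clause also follows the way you say, since the $\theta$-curves are orthogonal to the family of asymptotic radial minimizers, hence level sets of the Busemann function.

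Two concrete issues. First, the length lower bound in the expanding case is not ``the infimum of $L$ over the range of $r$-values it meets.'' A closed curve lying in $\{r\ge R\}$ and freely homotopic to a $\theta$-curve has length bounded below by $\int_{S^1}\inf_{r\ge R}G(r,\theta)\,\dd\theta$, which is in general strictly less than $\inf_{r\ge R}L(r)$ because $G(\cdot,\theta)$ need not be monotone in $r$. The correct bound does tend to $+\infty$: since $L$ is convex and $L\to+\infty$, the set $\{\theta:\lim_r\partial_r G(r,\theta)>0\}$ has positive measure, and on it $\inf_{r\ge R}G(r,\theta)\to+\infty$, so monotone convergence finishes the argument. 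This should be spelled out, since the quantity you write down need not diverge.

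Second, the exceptional parabolic case, which you flag as the hard point, is genuinely underdeveloped and contains a misstatement. You say vectors pointing into the interior of $H_\xi(\infty)=[\xi_1,\xi_2]$ ``parametrize a flat sector projecting into $V(\varepsilon)$.'' There is no flat sector here: Proposition \ref{TypesCC} shows the flat half-plane appears in the \emph{cylindrical} case; for exceptional parabolic ends the interval $[\xi_1,\xi_2]$ has Tits length $\le\pi$ and no geodesic joining its extremities, but the region is not flat. What is true is that, for $\eta$ in the interior of $[\xi_1,\xi_2]$, the ray to $\eta$ eventually enters the horoball (Proposition \ref{horcycle_infinity}, item 5), hence converges to the end, which gives the interior points of $V(\varepsilon)$. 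Non-openness requires showing that vectors pointing to $\xi_1$ or $\xi_2$ are in $V(\varepsilon)$ but are limits of vectors pointing outside $[\xi_1,\xi_2]$ whose geodesics leave the collar; Lemma \ref{distance_geod_horo} (which concerns divergence of the ray to $\xi_1$ from the bounding horocycle) does not directly give this. Finally, the theorem asserts these descriptions for \emph{any} parametrization of the form (\ref{tubular_parametrization}), so one must also show the asymptotic regime of $L$ does not depend on the choice of coordinates, which your outline does not address.
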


 We consider a neighborhood $U$ of the end $\varepsilon$ given by the theorem, which can be thought as the quotient of the universal cover $\tilde U$ of  $U$ by the group generated by an isometry $\gamma$ of infinite order. If $\varepsilon$ is simple (resp. exceptional) parabolic, then $\tilde U$ is a horoball and $\gamma$ is simple (resp. exceptional) parabolic. Conversely, a parabolic isometry produces an end of the same type. In fact: 

\begin{proposition}\cite[Proposition 3.6]{Eberlein79} \label{parabolic_ngh}
	Let $\gamma$ be a parabolic isometry and let $\xi$ be the point in $\partial X$ such that $\gamma$ fixes the horocycles centered at $\xi$. Then, there exists a horoball $B$ centered at $x$ such that, for all $g\in \Gamma$, $gB\cap B =\emptyset $ if $gx\not =x$ and $gB=B$ if $gx=x$.
\end{proposition}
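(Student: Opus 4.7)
The statement is cited from Eberlein \cite[Proposition 3.6]{Eberlein79}; reading $\xi$ for $x$, the assertion is that some horoball $B$ centered at $\xi$ satisfies $gB=B$ when $g\xi=\xi$ and $gB\cap B=\emptyset$ when $g\xi\neq\xi$. I would prove this by a Margulis-lemma style contradiction, in two stages.

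\emph{Stage 1: the stabilizer preserves every horocycle at $\xi$.} Let $\Gamma_\xi:=\{g\in\Gamma:g\xi=\xi\}$. Since $g\xi=\xi$, the difference $\beta_\xi\circ g-\beta_\xi$ is a constant $c(g)$, and $c\colon \Gamma_\xi\to(\mathbb{R},+)$ is a homomorphism. If $c(g)\neq 0$ for some $g\in\Gamma_\xi$, then $g$ cannot be parabolic nor elliptic (both preserve all horocycles at their fixed point), so by Proposition \ref{isometries} $g$ is axial with $\xi$ as one endpoint of its axis. Conjugating $g$ by large powers of $\gamma$ keeps $\xi$ fixed but drags the other endpoint $\eta$ of $g$ toward $\xi$ (because $\gamma^n\eta\to\xi$ as $n\to\infty$, $\gamma$ being parabolic with fixed point $\xi$). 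The resulting sequence $\gamma^n g\gamma^{-n}$ consists of axial elements of bounded translation length whose two endpoints collapse, producing small-displacement elements fixing a nearby point, which contradicts the discreteness of $\Gamma$. Hence $c\equiv 0$.

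\emph{Stage 2: existence of the horoball.} Assume for contradiction that no such $B$ exists. Pick a nested sequence of horoballs $B_n$ centered at $\xi$ with $\bigcap_n\overline{B_n}^{\overline{X}}=\{\xi\}$; there exist $g_n\in\Gamma\setminus\Gamma_\xi$ and points $q_n\in g_nB_n\cap B_n$. By Stage 1 the isometry $\gamma$ acts on each horocycle $\partial B_n$ as a translation; choose $k_n$ so that $\gamma^{k_n}q_n$ lies in a fixed compact fundamental strip $F\subset \overline{B_0}$ for $\langle\gamma\rangle$. Replacing $g_n$ by $\gamma^{k_n}g_n\gamma^{-k_n}$ and $q_n$ by $\gamma^{k_n}q_n$ leaves the assumptions intact. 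After extraction, $g_n\xi\to\eta\in\partial X$. From $\beta_\xi(q_n,p_0)\le -t_n\to-\infty$ and $\beta_{g_n\xi}(q_n,g_np_0)=\beta_\xi(g_n^{-1}q_n,p_0)\le -t_n\to-\infty$, together with Proposition \ref{joining}(4) applied to $\xi$ and $g_n\xi$, the accumulation forces $\eta=\xi$ or a geodesic joining $\xi$ to $\eta$ on which the $g_n$ act with bounded displacement. In the first case, the parabolic conjugates $g_n\gamma g_n^{-1}$ (with fixed points $g_n\xi\to\xi$) accumulate with bounded displacement on a compact set, violating discreteness. In the second case, discreteness yields $g_n=g_\infty$ eventually with $g_\infty\xi=\xi$, contradicting $g_n\notin\Gamma_\xi$.

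\emph{Main obstacle.} The delicate step is the compactness extraction in Stage 2: although the $\gamma$-normalization confines $q_n$ to a compact transversal slice, its depth in $B_n$ tends to infinity, so bounding $d(p_0,g_np_0)$ is not immediate. The key is to use the simultaneous deep entry of $q_n$ into both $B_n$ and $g_nB_n$ to control the Gromov product of $\xi$ and $g_n\xi$ at $p_0$, then translate this into a uniform bound on the displacement of $g_n$ along the compact slice $F$; this is where the convexity of Busemann functions and the structure theorem for isometries (Proposition \ref{isometries}) are used together with discreteness.
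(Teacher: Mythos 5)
The paper does not supply a proof of this proposition; it is quoted from Eberlein \cite{Eberlein79}, so there is no internal argument to compare against. Evaluated on its own terms, your sketch has two concrete gaps.

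In Stage 1, conjugating $g$ by $\gamma^n$ does not yield elements of small displacement, so discreteness cannot be contradicted this way. You invoke ``$\gamma^n\eta\to\xi$ as $n\to\infty$, $\gamma$ being parabolic,'' but by Proposition~\ref{isometries}(4) an exceptional parabolic $\gamma$ fixes the whole interval $H_\xi(\infty)$ pointwise, so it may well hold that $\gamma^n\eta=\eta\ne\xi$ for all $n$. More fundamentally, even in $\mathbb{H}^2$ where $\gamma^n\eta\to\xi$ is true, the axes of $\gamma^ng\gamma^{-n}$ escape to infinity and their displacement at any fixed basepoint grows without bound (try $\gamma\colon z\mapsto z+1$ and $g\colon z\mapsto\lambda^2z$: the $n$-th conjugate fixes $n$ and $\infty$). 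Preservation of translation length under conjugation says nothing about displacement at a fixed point once the axis has left every compact set. The argument works in the other direction: if $g\in\Gamma_\xi$ is axial and translates toward $\xi$, push a basepoint $p$ deep into a $\gamma$-invariant horoball via $g^n$. Since $\gamma$ fixes the horocycles at $\xi$, the rays $c$ and $\gamma c$ from $p$ and $\gamma p$ to $\xi$ are asymptotic and equidistant, and by convexity $d_\gamma(g^np)=d(c(nl_g),\gamma c(nl_g))\le d_\gamma(p)$. Hence $g^{-n}\gamma g^n$ have uniformly bounded displacement at $p$, and discreteness forces the sequence to stabilize, after which one derives a contradiction with $\gamma$ being parabolic.

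In Stage 2 you flag the obstruction yourself --- ``bounding $d(p_0,g_np_0)$ is not immediate'' --- and the sketch does not close it. The $\gamma$-normalization confines $q_n$ to a transversal fundamental strip $F$, but the horoball depth $t_n$ of $q_n$ is unbounded, and the asserted passage from the deep mutual penetration of $B_n$ and $g_nB_n$ to a displacement bound for $g_n$ on $F$ via a Gromov-product estimate is the crux of Eberlein's proof; it is stated as a remark, not established. One would also need to rule out the possibility that $g_n\xi$ accumulates to an endpoint of $H_\xi(\infty)$ other than $\xi$ when $\gamma$ is exceptional parabolic, a case your dichotomy ($\eta=\xi$ versus $\xi,\eta$ joinable by a geodesic) does not cover, since Proposition~\ref{joining} leaves open the non-joinable, non-equal case exactly when the Gromov product is infinite. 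As written, Stage 2 is not a proof.
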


\begin{remark}
	If $\varepsilon$ is the parabolic end associated to the point $\xi$, then $U=B/\stab_\Gamma(x)$ is a neighborhood of $\varepsilon$. A neighborhood of $\varepsilon$ obtained in this way will be called a \emph{standard neighborhood} of $\varepsilon$.
\end{remark}

By Proposition \ref{parabolic_ngh}, there is a correspondence between the conjugacy classes of parabolic isometries in $\Gamma$ and the parabolic ends of the surface $M$, preserving the fact of being simple or exceptional.
 
 For a cylindrical end $\varepsilon$ with neighborhood $U$ given by the theorem, the universal cover $\tilde U$ is a flat half-plane and $\gamma $ is an axial isometry, hence, a translation in the flat-half plan. If $\varepsilon$ is expanding, then $\tilde U$ is a nonflat half-plane and $\gamma$ is an axial isometry. The set $\tilde U$ can be thought as follows:

\begin{proposition}\label{funnel_ngh}
	Let $\varepsilon$ be a cylindrical or expanding end of $M$. Then there exists an axial isometry $\gamma$ which leaves invariant an interval $I$ of $\partial X\setminus \Lambda$ associated to the end $\varepsilon$. If a geodesic $c$ is an axis of $\gamma$, then $c(-\infty )$ and $c(+\infty)$ are the extremities of $I$ and the half-plane $\tilde U$ delimited by $c$ and $I$ satisfies, for all $g\in \Gamma$, $g \tilde U\cap \tilde U =\emptyset $ if $g$ does not fix $c$ and $g\tilde U= \tilde U$ otherwise.
\end{proposition}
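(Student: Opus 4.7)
The plan is to extract the required axial isometry from a closed geodesic bounding a neighborhood of $\varepsilon$ and then identify $\tilde U$ as a lift of that neighborhood. By Theorem \ref{classification_ends}, both cylindrical and expanding ends admit a neighborhood $U\cong S^1\times(0,+\infty)$ whose boundary contains a closed geodesic $\sigma$: for an expanding end we pick $U$ with $\partial U$ a closed geodesic directly, while for a cylindrical end, beyond $r_0$ the metric is flat, so any $\theta$-curve there is simultaneously a closed horocycle and a closed geodesic. After possibly shrinking, we may assume $\sigma=\partial U$.

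I would then lift $\sigma$ to a complete geodesic $c\subset X$ and let $\gamma\in\Gamma$ be the deck transformation corresponding to traversing $\sigma$ once; this $\gamma$ is axial with axis $c$. Let $\tilde U$ be the connected component of $\pi^{-1}(U)$ whose topological boundary in $X$ is $c$. Since $U$ is evenly covered, the restriction $\pi\colon\tilde U\to U$ is a universal cover with deck group $\stab_\Gamma(\tilde U)\cong \pi_1(U)\cong \mathbb{Z}$ generated by $\gamma$. Geometrically $\tilde U$ is a half-plane (flat in the cylindrical case, non-flat in the expanding one), and its closure in $\overline X$ meets $\partial X$ in a closed arc with endpoints $c(\pm\infty)$; I take $I$ to be the interior of this arc, which is associated to $\varepsilon$ because its interior points are precisely the endpoints at infinity of geodesic rays that eventually project into $U$.

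For the dichotomy, the translates $\{g\tilde U : g\in\Gamma\}$ are disjoint connected components of $\pi^{-1}(U)$, so either $g\tilde U=\tilde U$ or $g\tilde U\cap\tilde U=\emptyset$. If $g\tilde U=\tilde U$, then $g$ preserves $c$; conversely, any $g\in\Gamma$ fixing $c$ acts as a translation along it (freeness of the $\Gamma$-action rules out reflections, and orientation-preservation rules out glide reflections across $c$), hence preserves both sides of $c$ and in particular preserves $\tilde U$, so $g\in\stab_\Gamma(\tilde U)=\langle\gamma\rangle$. Finally, $I\cap\Lambda=\emptyset$: any orbit $\Gamma p$ meets $\tilde U$ in at most one $\langle\gamma\rangle$-orbit, which stays at bounded distance from $c$ and therefore can only accumulate in $\partial X$ at $c(\pm\infty)$, not at any interior point of $I$.

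The main obstacle is the identification $\stab_\Gamma(c)=\langle\gamma\rangle$: the piece $\stab_\Gamma(\tilde U)=\langle\gamma\rangle$ is pure covering-space theory, but showing that every $g$ preserving $c$ also preserves $\tilde U$ uses the tacit orientation-preservation convention (implicit throughout the paper) to rule out glide reflections along $c$. If this convention were absent one could pass to the orientation double cover, or argue directly that a glide reflection along $c$ would be incompatible with the collared structure of $\varepsilon$ obtained in Theorem \ref{classification_ends}.
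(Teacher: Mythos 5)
Your proof is correct and follows the same construction that the paper sketches immediately before the proposition (the paper states Proposition~\ref{funnel_ngh} without an explicit proof, but the preceding paragraph outlines exactly this: take a standard neighborhood $U$ of $\varepsilon$ from Theorem~\ref{classification_ends}, identify a half-plane component $\tilde U$ of $\Pi^{-1}(U)$ bounded by a lift $c$ of a bounding closed geodesic, and use the covering-space identification $\stab_\Gamma(\tilde U)\cong\pi_1(U)\cong\mathbb{Z}$). Your flagged caveat is the right one to flag: the paper works throughout with orientable surfaces (this is made explicit at the start of Section~3 and is implicit already in the definition of $h_s$), which rules out glide reflections along $c$ and yields $\stab_\Gamma(c)=\stab_\Gamma(\tilde U)=\langle\gamma\rangle$.

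Two minor points worth tightening, neither a genuine gap. First, in the cylindrical case you invoke flatness of the region $\{r\ge r_0\}$; this follows from $L$ being constant there together with $K\le 0$ (so $G_{rr}\ge0$ and $L''=\int G_{rr}\,\dd\theta=0$ forces $K\equiv0$), but the paper instead simply cites the Eberlein/LPP classification asserting that $\tilde U$ is a flat half-plane, so you may as well quote that. Second, in the argument that $I\cap\Lambda=\emptyset$ you discuss only $\Gamma p\cap\tilde U$; to conclude, one also uses (tacitly) that in the cone topology a sequence in $X$ converging to a point of the open arc $I$ must eventually lie in $\tilde U$, so the part of $\Gamma p$ outside $\tilde U$ contributes no accumulation points in $I$ either. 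This is immediate but worth a sentence.
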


\begin{remark}
	The end $\varepsilon$ has a neighborhood of the form  $U=\tilde U/\stab_\Gamma(c)$. This neighborhood is bounded by a closed geodesic unique up to homotopy. A neighborhood of $\varepsilon$ obtained in this way will be called a \emph{standard neighborhood} of $\varepsilon$.
\end{remark}

\subsection{Description of horocycles}

From now on, we consider a nonelementary nonpositively curved surface $M$ and we denote its universal cover by $X$. We equip $X$ with the lifted metric and we let $\Gamma$ denote the covering group of $M$ acting on $X$ by isometries. The goal of this section is to describe the horocycles on $X$. 
We begin with a direct consequence of Lemma \ref{joining_limit}.
\begin{proposition}
	Let $\eta\in \Lambda$ a limit point which does not bound a connected component of $\partial X\setminus \Lambda$. Then any horocycle centered at $\eta$ only accumulates to infinity at $\eta$.
\end{proposition}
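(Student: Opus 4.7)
The plan is to combine item 6 of Proposition \ref{horcycle_infinity} with Lemma \ref{joining_limit}, and then use the hypothesis on $\eta$ to force the accumulation interval to degenerate to a single point.

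Concretely, write $H_\eta(\infty) = [\xi_1, \xi_2]$ as provided by Proposition \ref{horcycle_infinity}, so $\eta \in [\xi_1, \xi_2]$ and, by item 6 of that proposition, every horocycle centered at $\eta$ accumulates on $\partial X$ exactly at $\xi_1$ and $\xi_2$. The goal is therefore reduced to proving $\xi_1 = \xi_2 = \eta$.

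Suppose for contradiction that $\xi_1 \ne \xi_2$. By item 4 of Proposition \ref{horcycle_infinity}, no two distinct points in the interior $(\xi_1,\xi_2)$ can be joined by a geodesic. Apply Lemma \ref{joining_limit} (or, more directly, its concluding sentence) to conclude that $(\xi_1,\xi_2) \cap \Lambda = \emptyset$. Since $\eta \in \Lambda$ and $\eta \in [\xi_1,\xi_2]$, $\eta$ must coincide with $\xi_1$ or $\xi_2$. Then the open interval $(\xi_1,\xi_2)$ is contained in some connected component of $\partial X\setminus \Lambda$, and $\eta$ is an endpoint of that component. This contradicts the hypothesis that $\eta$ does not bound a connected component of $\partial X \setminus \Lambda$, so $\xi_1 = \xi_2 = \eta$.

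There is essentially no obstacle here beyond assembling the cited facts correctly; the only point that requires a moment's care is that $(\xi_1,\xi_2)$ being disjoint from $\Lambda$ and nonempty really does place it inside a single connected component of $\partial X \setminus \Lambda$ bounded on one side by $\eta$, which is immediate from the topology of $\partial X \cong S^1$.
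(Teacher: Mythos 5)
Your proof is correct and fills in precisely the argument the paper intends when it calls this a ``direct consequence'' of Lemma~\ref{joining_limit}: item 6 of Proposition~\ref{horcycle_infinity} reduces the claim to showing $H_\eta(\infty)=\{\eta\}$, the concluding sentence of Lemma~\ref{joining_limit} excludes $\Lambda$ from the interior of $H_\eta(\infty)$, and the hypothesis on $\eta$ then forces the interval to degenerate. No gap; same route as the paper.
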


To understand the horocycles centered at points which bound a connected component of $\partial X\setminus \Lambda$ or which are not in the limit set, we will use the classification of ends.

\begin{proposition}\label{TypesCC}
	Let $I=(\eta_1,\eta_2)$ be a connected component of $\partial X\setminus \Lambda$ associated to a collared end $\varepsilon$ of $M$. Then,
	
	\begin{enumerate}
		\item if $\varepsilon$ is cylindrical, $Td(\eta_1 ,\eta_2)=\pi$ and there exists $\eta\in I$ such that $H_\eta(\infty)=[\eta_1,\eta_2]$,
		
		\item  if $\varepsilon$ is parabolic exceptional, $Td(\eta_1 ,\eta_2)\le\pi$ and there exists $\eta\in I$ such that $H_\eta(\infty)=[\eta_1,\eta_2]$,
		
		\item if $\varepsilon$ is expanding, $Td(\eta_1 ,\eta_2) =+\infty$. For $i=1,2$, the horocycle centered at $\eta_i$ satisfies $H_{\eta_i}(\infty)=\{\eta_i\}$. Moreover, if $\xi\in I$, then $H_\xi(\infty) \subset I$.
		
	\end{enumerate}
\end{proposition}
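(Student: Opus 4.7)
\emph{Plan.} The proof splits into the three cases according to the type of end. The common toolkit is: the end–isometry correspondence of Propositions~\ref{parabolic_ngh} and~\ref{funnel_ngh} (producing a distinguished isometry stabilizing a lift of a neighborhood of $\varepsilon$); Proposition~\ref{horcycle_infinity} together with the identification of $H_\xi(\infty)$ with the closed Tits $\pi/2$-ball around $\xi$, which converts Tits-distance bounds into horocycle statements and vice versa; the flat-half-plane criterion in item 4 of Proposition~\ref{horcycle_infinity}; and Lemma~\ref{joining_limit}, which confines any $H_\bullet(\infty)$ to the closure of a single connected component of $\partial X\setminus\Lambda$ once one of its points is known to lie inside such a component.

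\emph{Cases (1) and (2).} For the cylindrical case, Proposition~\ref{funnel_ngh} gives an axial isometry $\gamma$ whose axis $c$ bounds a flat half-plane $\tilde U$ with $c(\pm\infty)=\eta_{1,2}$. The visual boundary of a Euclidean half-plane is a Tits semicircle of length exactly $\pi$ joining its two endpoints at infinity, so $Td(\eta_1,\eta_2)=\pi$ and the midpoint $\eta\in I$ (at Tits distance $\pi/2$ from each $\eta_i$) satisfies $\{\eta_1,\eta_2\}\subset H_\eta(\infty)$; since $H_\eta(\infty)$ is a closed interval containing $\eta$ and, by Lemma~\ref{joining_limit}, disjoint from the interior of $\Lambda$, it coincides with $[\eta_1,\eta_2]$. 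For the exceptional parabolic case, the end $\varepsilon$ corresponds to the conjugacy class of an exceptional parabolic isometry $\gamma$ with center $\xi$ and fixed-point interval $H_\xi(\infty)=[\xi_1,\xi_2]$ (Proposition~\ref{isometries}); the discussion after Lemma~\ref{intersection_types_limit} identifies $\{\xi_1,\xi_2\}$ with $\{\eta_1,\eta_2\}$, so taking $\eta=\xi$ yields $H_\eta(\infty)=[\eta_1,\eta_2]$ directly, and the inclusions $\eta_i\in H_\xi(\infty)=\bar B_{\pi/2}(\xi)$ together with the triangle inequality give $Td(\eta_1,\eta_2)\le\pi$.

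\emph{Case (3) — expanding.} Proposition~\ref{funnel_ngh} supplies an axial isometry $\gamma$ with axis $c$ and $c(\pm\infty)=\eta_{1,2}$ bounding a half-plane $\tilde U$ which is not flat (Theorem~\ref{classification_ends}: $L(r)\to+\infty$). I first establish $H_{\eta_i}(\infty)=\{\eta_i\}$ by contradiction: a non-degenerate $H_{\eta_1}(\infty)=[\xi_1',\xi_2']$ would, by item 4 of Proposition~\ref{horcycle_infinity}, force $\xi_1',\xi_2'$ to be joined by a geodesic bounding a flat half-plane, and since any geodesic asymptotic to $\eta_1$ must be asymptotic to $c$, such a flat half-plane would force $c$ itself to bound a flat strip, contradicting the expanding character of $\varepsilon$. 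The inclusion $H_\xi(\infty)\subset I$ for $\xi\in I$ follows by the same flat-half-plane obstruction applied to the endpoints $\eta_1,\eta_2$, combined with Lemma~\ref{joining_limit}. Finally, $H_{\eta_i}(\infty)=\{\eta_i\}$ gives $Td(\eta_1,\eta_2)>\pi/2$ immediately; to upgrade this to $+\infty$, I plan to adapt the Gauss--Bonnet argument of Lemma~\ref{joining_limit} to the sector of $\tilde U$ swept out by rays from a basepoint to points of $I$, using the divergence $L(r)\to+\infty$ to show that the total angular opening of $\tilde U$ at infinity is infinite, so no path of finite Tits length can join $\eta_1$ to $\eta_2$ through the visual boundary of $\tilde U$. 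This last step—converting the expanding geometry of $\tilde U$ into an obstruction on Tits-rectifiable paths—is where I expect the main difficulty.
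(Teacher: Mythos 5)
Cases (1) and (2) are essentially the paper's proof: the flat half-plane gives the Tits semicircle of length $\pi$ with midpoint $\eta$, and in the exceptional parabolic case $\eta$ is the center of the parabolic isometry (the $Td\le\pi$ step via the $\pi/2$-ball and the triangle inequality is a harmless variant of the paper's citation of a general result that two points not joined by a geodesic are at Tits distance $\le\pi$).

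Case (3) is where your argument breaks. First, your derivation of $H_{\eta_i}(\infty)=\{\eta_i\}$ misreads item~4 of Proposition~\ref{horcycle_infinity}: that item says the endpoints of $H_{\eta_i}(\infty)$ are the \emph{only} pair that \emph{may} be joined by a geodesic, and that \emph{if} they are joined, the geodesic bounds a flat half-plane. It does not assert that a geodesic joining them exists, so a nondegenerate $H_{\eta_1}(\infty)$ does not ``force'' a flat half-plane. Even if a geodesic did exist from $\eta_1$ to the other endpoint $\xi_2'\in I$, it would not be the axis $c$ (unless $\xi_2'=\eta_2$), and it is not immediate that a flat half-plane bounded by that other geodesic forces $c$ itself to bound a flat strip. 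Second, you explicitly flag the passage from $Td(\eta_1,\eta_2)>\pi/2$ to $Td(\eta_1,\eta_2)=+\infty$ as an open difficulty; the Gauss--Bonnet sector argument you sketch is not the route the paper takes, and you have not shown it works. The paper's argument is much shorter and reverses your logical order: take the axial isometry $\gamma$ from Proposition~\ref{funnel_ngh} and the endpoint $\xi\in I$ of a ray $h$ perpendicular to the axis $c$. Since $\gamma$ does not fix $\xi$, the Tits distance $Td(\xi,\gamma\xi)$ is a fixed positive number, and the infinitely many translates $\gamma^n\xi$ all lie in $I$ between $\eta_1$ and $\eta_2$, giving $Td(\eta_1,\eta_2)=+\infty$; the same reasoning gives $Td(\xi',\eta_i)=+\infty$ for every $\xi'\in I$. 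From this, $H_{\xi'}(\infty)=\bar B_{Td}(\xi',\pi/2)\subset I$ and $H_{\eta_i}(\infty)$ cannot meet $I$; since $\eta_i$ is not isolated in $\Lambda$ (nonelementary group), $H_{\eta_i}(\infty)=\{\eta_i\}$. I would recommend adopting this order of implications: establish the infinite Tits distance via the $\gamma$-translates first, then read off the two horocycle statements as corollaries.
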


\begin{proof}
	First we observe that the closed interval $\bar I=[\eta_1,\eta_2]$ is the set at infinity of the lift $\tilde U$ of a neighborhood of $\varepsilon$ obtained from Theorem \ref{classification_ends}. If the end $\varepsilon$ is cylindrical, $\tilde U$ is a flat half-plane, so the Tits distance between $\eta_1 $ and $\eta_2$ is exactly $\pi$ and $\eta$ is the direction perpendicular to the geodesic bounding the flat half-plane. 
	If the end $\varepsilon$ is exceptional parabolic, there is no geodesic joining $\eta_1$ to $\eta_2$, so the Tits distance between them is less or equal than $\pi$ by \cite[Proposition 9.21]{BridsonHaefliger}. The point $\eta$ is the center of the horocycle that bounds the neighborhood $\tilde U$.
	 
	Now assume that $\varepsilon$ is expanding. Let $c$ be the geodesic that bounds $\tilde U$, which joins $\eta_1$ to $\eta_2$. We consider a geodesic ray $h$ starting at a point on $c$, perpendicular to it and pointing to the interior of $\tilde U$. Let $\gamma$ be the translation isometry of $c$. The curves $c$, $h$ and $\gamma h$ bound a fundamental domain of the expanding end. The isometry $\gamma$ does not fix the point at infinity of $h$, which we denote by $\xi$. Hence, the Tits distance between $\xi$ and $\gamma\xi$ is positive. The translates of $\xi$ by $\gamma$ form a sequence in $I$ of points which are at a positive constant Tits distance, so the distance between the two end points of $I$ is infinite. In fact, the same argument shows that the Tits distance between any point in $I$ and one of the extremities is infinite. As a consequence, for any $\eta \in I$, the set $H_\eta(\infty )= {\bar{B}_{Td}}(\xi,\pi/2)$ is contained in $I$. Also, it implies that the intervals $H_{\eta_i}(\infty)$ cannot intersect $I$. Since $\eta_i$ is not isolated in $\Lambda$, we have in fact $H_{\eta_i}(\infty)=\{\eta_i\}$.

\end{proof}

\begin{remark}
	Link, Peigné and Picaud observed that for exceptional parabolic ends obtained as surfaces of revolution, Clairaut's relation implies that $Td(\eta_1,\eta_2)=\pi$. It is not known if this property holds in general.
\end{remark}

We can now describe precisely the accumulations points of horocycles centered at a cylindrical or an exceptional parabolic end.

\begin{proposition} \label{horocycleparaboliccylindrical}
	Let $I=(\eta_1,\eta_2)$ be a connected component of $\partial X\setminus \Lambda $ associated to a cylindrical or an exceptional parabolic end, and let $\eta \in [\eta_1,\eta_2]$ be such that $H_\eta(\infty)=[\eta_1,\eta_2]$. We parametrize $[\eta_1,\eta_2] $ by a geodesic $c:[a,b]\to [\eta_1,\eta_2],\, -\pi/2\le a\le 0\le b\le \pi/2$, in the Tits distance such that $c(a)=\eta_1$, $c(0)=\eta$, $c(b)=\eta_2$. Then, for every $t\in[a,b]$,
	$$
	H_{c(t)}(\infty )= [c(\max(a,t-\pi/2)), c(\min(b,t+\pi/2))]
	$$
\end{proposition}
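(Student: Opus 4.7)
The plan is to combine the Tits--ball description of $H_\xi(\infty)$ recalled earlier ($H_\xi(\infty)=\bar{B}_{Td}(\xi,\pi/2)$) with the elementary topology of $\partial X\setminus \Lambda$ provided by Lemma \ref{joining_limit}.

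The first step is the easy inclusion. Since $c:[a,b]\to \partial X$ is unit-speed in the Tits metric, $Td(c(s),c(t))=|s-t|$ for all $s,t\in[a,b]$. Any $s$ with $|s-t|\le \pi/2$ that lies in $[a,b]$ belongs to $[\max(a,t-\pi/2),\min(b,t+\pi/2)]$, so the Tits--ball identity gives directly
\[
c\bigl([\max(a,t-\pi/2),\min(b,t+\pi/2)]\bigr)\subset H_{c(t)}(\infty).
\]

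The main step is the reverse inclusion, and it is enough to prove $H_{c(t)}(\infty)\subset [\eta_1,\eta_2]$. I would write $H_{c(t)}(\infty)=[\mu_1,\mu_2]$ as a closed interval of $\partial X$ using Proposition \ref{horcycle_infinity}(3); by Lemma \ref{joining_limit}, the open interval $(\mu_1,\mu_2)$ is disjoint from $\Lambda$, hence connected and contained in a single connected component of $\partial X\setminus \Lambda$. If $t\in(a,b)$, then $c(t)\in I\subset \partial X\setminus \Lambda$, and the first step produces a visual neighborhood of $c(t)$ already contained in $[\mu_1,\mu_2]$; thus $c(t)\in(\mu_1,\mu_2)$ and the component in question is forced to be $I=(\eta_1,\eta_2)$, giving $[\mu_1,\mu_2]\subset[\eta_1,\eta_2]$. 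For the boundary cases $t=a$ (the case $t=b$ is symmetric), $c(t)=\eta_1\in \Lambda$, so $\eta_1$ cannot lie in $(\mu_1,\mu_2)$ and must be an endpoint of $[\mu_1,\mu_2]$; the counterclockwise arc into $I$ delivered by step one rules out $\eta_1=\mu_2$, so $\mu_1=\eta_1$ and $(\mu_1,\mu_2)=(\eta_1,\mu_2)$ is a connected subset of $\partial X\setminus \Lambda$ meeting $I$, hence contained in $I$. This again places $[\mu_1,\mu_2]$ inside $[\eta_1,\eta_2]$.

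The third step is to combine everything. From Proposition \ref{TypesCC} we have $b-a\le\pi$, so $c$ is injective on $[a,b]$ and $c([a,b])=[\eta_1,\eta_2]$ as a subset of $\partial X$. Every point of $H_{c(t)}(\infty)\subset [\eta_1,\eta_2]$ is therefore of the form $c(s)$ for a unique $s\in[a,b]$, and the Tits--ball criterion $|s-t|\le\pi/2$ selects exactly $s\in[\max(a,t-\pi/2),\min(b,t+\pi/2)]$, yielding the claimed equality.

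The hard part is the second step, and specifically the endpoint cases $t\in\{a,b\}$: once $c(t)\in\Lambda$ one loses the ``center in the interior'' argument and must instead align the cyclic orientation of $\partial X$ with the Tits parametrization of $c$, using the small arc produced by step one to pin down which of $\mu_1$ and $\mu_2$ coincides with $\eta_i$ and to prevent $[\mu_1,\mu_2]$ from extending past $\eta_i$ on the wrong side (where it would immediately meet $\Lambda$, contradicting Lemma \ref{joining_limit}).
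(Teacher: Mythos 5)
Your proof is correct and rests on the same central fact as the paper's terse two-sentence argument, namely that $H_\xi(\infty)=\bar{B}_{Td}(\xi,\pi/2)$. The paper's sketch leaves the containment $H_{c(t)}(\infty)\subset[\eta_1,\eta_2]$ implicit, whereas you establish it carefully via Lemma~\ref{joining_limit} and a connectedness argument on $(\mu_1,\mu_2)$ (including the orientation check at the endpoints $t=a,b$) — a welcome tightening of the original.
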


\begin{proof}
	We recall that two points in the boundary $\partial X$ within finite Tits distance can be joined by a geodesic of $\partial X$. The proposition follows then from the fact that intervals of horocycles at infinity are balls in the Tits distance of radius $\pi/2$.
\end{proof}

In general, there are some restrictions for two horocycles at infinity to intersect.

\begin{proposition}\label{orderpointsinfinity}
	Let $\eta ,\xi \in \partial X$ and denote $H_\eta(\infty)=[\eta_1,\eta_2]$ and $H_\xi(\infty)=[\xi_1,\xi_2]$. If $\xi_2\in [\eta_1,\eta_2)$, then $\xi\in [\xi_1, \eta)$.
\end{proposition}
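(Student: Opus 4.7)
The plan is to distinguish two cases according to whether $\xi$ lies in the closed Tits ball $H_\eta(\infty)=[\eta_1,\eta_2]$. In the easy case $\xi\notin[\eta_1,\eta_2]$, one has $Td(\xi,\eta)>\pi/2$ and, by symmetry of the Tits distance, $\eta\notin H_\xi(\infty)=[\xi_1,\xi_2]$; hence $\eta$ lies in the complementary open arc $(\xi_2,\xi_1)$, and going counterclockwise from $\xi_1$ one traverses all of $[\xi_1,\xi_2]$ before meeting $\eta$. Since $\xi\in[\xi_1,\xi_2]$, this already gives $\xi\in[\xi_1,\eta)$.

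The main case is $\xi\in[\eta_1,\eta_2]$, where symmetrically $\eta\in[\xi_1,\xi_2]$. I would argue by contradiction. The equality $\xi=\eta$ would give $H_\xi(\infty)=H_\eta(\infty)$ and in particular $\xi_2=\eta_2$, contradicting $\xi_2\in[\eta_1,\eta_2)$. So the point is to rule out the possibility that $\xi$ lies counterclockwise strictly after $\eta$ inside $[\eta_1,\eta_2]$, i.e.\ $\xi\in(\eta,\eta_2]$. Under this assumption the cyclic order within $[\eta_1,\eta_2]$ is $\eta_1,\eta,\xi,\xi_2,\eta_2$: indeed $\xi_2$ sits counterclockwise after $\xi$ inside $[\xi_1,\xi_2]$ and belongs to $[\eta_1,\eta_2)$, which forces $\xi_2\in(\xi,\eta_2)$.

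To close the argument, I would invoke the Tits-geodesic parametrization used in Proposition~\ref{horocycleparaboliccylindrical}: the interval $[\eta_1,\eta_2]$, viewed as the closed Tits ball of radius $\pi/2$ around $\eta$, is swept monotonically (with respect to the counterclockwise order on the circle) by the Tits geodesic joining $\eta_1$ to $\eta_2$, so the sub-arc $[\eta,\eta_2]$ is itself a Tits geodesic passing through $\xi$. This yields the triangle equality $Td(\eta,\eta_2)=Td(\eta,\xi)+Td(\xi,\eta_2)$, whence $Td(\xi,\eta_2)\le Td(\eta,\eta_2)\le\pi/2$ and therefore $\eta_2\in H_\xi(\infty)=[\xi_1,\xi_2]$. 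But in the cyclic order just described, $\eta_2$ sits strictly counterclockwise past $\xi_2$, i.e.\ inside the complementary arc $(\xi_2,\xi_1)$, contradicting $\eta_2\in[\xi_1,\xi_2]$.

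The step I expect to be the main obstacle is precisely this Tits-geodesic input: Proposition~\ref{horocycleparaboliccylindrical} is stated only for $\eta$ whose horocyclic interval corresponds to a cylindrical or exceptional parabolic end, and one should verify that the same argument, which only uses that $H_\zeta(\infty)$ is the closed Tits ball of radius $\pi/2$ and that points at finite Tits distance can be joined by a Tits geodesic in $\partial X$, carries over to any $\eta$ arising in the main case above.
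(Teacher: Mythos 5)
Your proof is essentially the paper's argument: both rest entirely on the Tits-metric picture, namely that $H_\zeta(\infty)$ is the closed Tits ball $\bar B_{Td}(\zeta,\pi/2)$ and that distances add along a Tits geodesic in the boundary circle. The paper packages this slightly differently (it produces a Tits geodesic from $\xi_1$ to $\eta_2$ and deduces $Td(\xi,\eta_2)=Td(\xi,\xi_2)+Td(\xi_2,\eta_2)>\pi/2\ge Td(\eta,\eta_2)$, which forces $\xi\notin[\eta,\xi_2]$), whereas you run the complementary contradiction along the sub-arc $[\eta,\eta_2]$; these are two renderings of the same computation.

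Concerning the step you flag as a potential obstacle: you do not need Proposition~\ref{horocycleparaboliccylindrical}, which is indeed stated only for cylindrical or exceptional parabolic intervals. The fact you actually use -- that the counterclockwise arc from $\eta$ to any $\zeta\in[\eta,\eta_2]$ is a Tits geodesic, so that $Td(\eta,\eta_2)=Td(\eta,\xi)+Td(\xi,\eta_2)$ -- is a general consequence of the Tits boundary of a CAT(0) space being CAT(1): on a CAT(1) metric circle the circumference is at least $2\pi$, so any arc of Tits length at most $\pi$ (here, at most $\pi/2$, since $[\eta,\eta_2]\subset\bar B_{Td}(\eta,\pi/2)$) is the unique geodesic between its endpoints. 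The paper leans on the same kind of fact, without further justification, when it invokes the Tits geodesic between $\xi_1$ and $\eta_2$ from the finiteness of their Tits distance, so your concern applies equally to the paper's phrasing and is resolved by this general CAT(1) input rather than by the special parametrization of Proposition~\ref{horocycleparaboliccylindrical}.
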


\begin{proof}
	The distance between $\xi_1$ and $\eta_2$ is less or equal than $2\pi$. So there exists a geodesic in the Tits distance joining $\xi_1$ to $\eta_2$. The distance between $\eta_2 $ and $\xi_2$ must be strictly positive and the point $\xi_2$ is a distance exactly $\pi/2$ from $\xi$. This shows that the Tits distance between $\eta_2$ and $\xi$ is strictly bigger than $\pi/2$, while $Td(\eta_2,\eta)\le \pi/2$. So $\xi$ cannot be in the interval $[\eta, \xi_2]$.
\end{proof}

Two horocycles with distinct centers at infinity can accumulate at the same point at infinity. We next show that, even when this happens, the horocycles diverge.

\begin{proposition}\label{distance_horocycles}
	Let $v,w\in T^ 1X$ two vectors whose geodesics are not asymptotic. Then the horocycles of $v$ and $w$ diverge, that is, if we consider the parametrizations $\alpha,\beta:\reals\to X$ given by $\alpha(s):=\pi(h_s(v))$ and $\beta(s):=\pi(h_s(w))$, then
	$$d(\alpha(s),\beta(\reals_+))\xrightarrow[s\to+ \infty]{}+\infty,$$
	$$	d(\alpha(\reals_+),\beta(s))\xrightarrow[s\to+ \infty]{}+\infty.$$
\end{proposition}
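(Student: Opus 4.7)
The plan is to argue by contradiction. Assume the first divergence fails, so there exist $C>0$ and sequences $s_n\to+\infty$, $t_n\ge 0$ with $d(\alpha(s_n),\beta(t_n))\le C$. Write $\xi=v_+$, $\eta=w_+$, and let $H_\xi(\infty)=[\xi_1,\xi_2]$ and $H_\eta(\infty)=[\eta_1,\eta_2]$. By Proposition~\ref{horcycle_infinity}(6), together with the counterclockwise orientation of the horocyclic flow, $\alpha(s)$ converges as $s\to+\infty$ to one of the two endpoints of $H_\xi(\infty)$, which I will denote $\xi_+$; similarly $\beta(s)\to\eta_+$. Then $\alpha(s_n)\to\xi_+\in\partial X$ forces $t_n\to+\infty$ along a subsequence (otherwise $\beta(t_n)$ stays in a bounded subset of $X$, contradicting the bounded distance). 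Hence $\beta(t_n)\to\eta_+$, and since two sequences of $X$ at uniformly bounded distance share their limit in $\overline X=X\cup\partial X$ in the cone topology, we conclude $\xi_+=\eta_+=:\zeta$.

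The hypothesis that the geodesics of $v$ and $w$ are not asymptotic gives $\xi\ne\eta$. Were $X$ CAT$(-1)$ we would already be done, for then $H_\xi(\infty)=\{\xi\}$ and $H_\eta(\infty)=\{\eta\}$, so $\zeta=\xi=\eta$, a contradiction. In the general nonpositively curved case more work is needed, because $\zeta$ can be a common boundary point of the nondegenerate Tits intervals $H_\xi(\infty)$ and $H_\eta(\infty)$. The plan to close the argument is a compactness argument through the $\Gamma$-action: choose $\gamma_n\in\Gamma$ such that $\gamma_n^{-1}\alpha(s_n)$ lies in a fixed compact fundamental domain (possible along a subsequence whenever the projections of $\alpha(s_n)$ to $M$ stay in a compact set; the complementary case, where those projections escape to an end of $M$, is treated separately using the standard neighborhoods from Propositions~\ref{parabolic_ngh} and~\ref{funnel_ngh}). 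Passing to a further subsequence, $\gamma_n^{-1}h_{s_n}v\to u_\infty$ and $\gamma_n^{-1}h_{t_n}w\to u'_\infty$ in $T^1X$, with base points at distance at most $C$; the limit vectors lie on horocycles centered at $\xi_\infty=\lim\gamma_n^{-1}\xi$ and $\eta_\infty=\lim\gamma_n^{-1}\eta$ sharing the boundary accumulation point $\zeta_\infty=\lim\gamma_n^{-1}\zeta$. Combining Proposition~\ref{TypesCC} with Theorem~\ref{classification_ends}, this configuration forces $\zeta_\infty$ to be the extremity of an interval of $\partial X\setminus\Lambda$ corresponding to a cylindrical or exceptional parabolic end, with $\xi_\infty,\eta_\infty$ being boundary directions of the associated flat half-plane or flat sector. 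The rigidity of these flat pieces then gives $\xi_\infty=\eta_\infty$, and undoing the renormalization contradicts $\xi\ne\eta$. The symmetric statement for $\beta(s)$ against $\alpha(\reals_+)$ follows by interchanging the roles of $v$ and $w$.

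The main obstacle is precisely the passage from $\xi_+=\eta_+$ to $\xi=\eta$ in nonpositive curvature. In strictly negative curvature it is immediate, but here the equality of the forward boundary limits of the two half-horocycles is compatible with distinct centers exactly when $\zeta$ lies on the boundary of a nontrivial Tits interval, i.e.\ near the flat regions attached to cylindrical or exceptional parabolic ends. Overcoming it relies on exploiting the structural description of those flat regions (Theorem~\ref{classification_ends} together with Propositions~\ref{TypesCC} and~\ref{horocycleparaboliccylindrical}), combined with the renormalization via the $\Gamma$-action, to rule out the bounded-distance accumulation of the two half-horocycles.
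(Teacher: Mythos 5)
The approach you sketch is genuinely different from the paper's, which is a short, self-contained convexity argument entirely inside $X$: after reducing to the case $\pi(v)=\pi(w)$ and setting $\theta$ to be the angle between $v$ and $w$, one studies the convex function $f(t)=\beta_{w_+}(c_v(t),\pi(w))$ with $f(0)=0$, $f'(0)=-\cos\theta$, and splits into the cases where the geodesic of $v$ does or does not escape the horoball of $w$. Neither the group $\Gamma$, the quotient $M$, nor the classification of ends enters that proof.

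Your first paragraph is sound: the bounded-distance assumption indeed forces the two half-horocycles to accumulate at the same boundary point $\zeta$. But the remainder is a sketch with gaps that I do not see how to close. First, in nonpositive curvature, $\xi_+=\eta_+$ with $\xi\neq\eta$ is a perfectly consistent configuration (this is exactly what items 3--4 of Proposition~\ref{horcycle_infinity} and the last sentence of Proposition~\ref{joining} allow), so there is nothing contradictory yet; the whole content of the proposition is that the horocycles nevertheless diverge, and your contradiction must come from the quantitative bounded-distance hypothesis, which you stop using after the first paragraph. Second, the claim that the renormalized configuration ``forces $\zeta_\infty$ to be the extremity of an interval of $\partial X\setminus\Lambda$'' and that ``rigidity of these flat pieces gives $\xi_\infty=\eta_\infty$'' is not justified, and the latter looks false: two horocycles with distinct centers adjacent to a flat region can share an accumulation point at infinity without coinciding --- that is exactly what one needs to rule out. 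Third, even if one had $\xi_\infty=\eta_\infty$, you cannot ``undo the renormalization'' to get $\xi=\eta$: from $\gamma_n^{-1}\xi\to\xi_\infty$ and $\gamma_n^{-1}\eta\to\eta_\infty=\xi_\infty$ one cannot conclude $\xi=\eta$, since the isometries $\gamma_n^{-1}$ (which necessarily escape to infinity in $\Gamma$) can and do collapse distinct boundary points towards a common limit. Finally, the ``complementary case'' where the projected sequence escapes to an end is not carried out, and since that is a typical occurrence for horocycles centered near an interval of $\partial X\setminus\Lambda$, it cannot be dismissed. The paper avoids all of this by staying local in $X$ and exploiting convexity of the Busemann function of $w_+$ restricted to the geodesic of $v$; I would encourage you to look for an argument of that kind rather than trying to push the renormalization scheme through.
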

\begin{proof}
	First we show that both formulas imply each other. Assume that 
	\begin{equation}\label{eq:div_1}
		\lim_{s\to +\infty} d(\alpha(\reals_+),\beta(s))=+\infty.
	\end{equation} 
	For every $s\in \reals_+$, let $t_s\in \reals_+$ such that $d(\alpha (s),\beta(\reals_+))=d(\alpha(s), \beta(t_s))$. We have 
	\begin{equation}\label{eq:div_2}
		d(\alpha(s),\beta(\reals_+))\ge d(\alpha(\reals_+),\beta(t_s)).
	\end{equation}
	Let $s_k$ be a sequence going to $+\infty$ such that
	$$
	\liminf_{s\to +\infty } d(\alpha(s),\beta(\reals_+)) =\lim_k d(\alpha(s_k),\beta(\reals_+)).
	$$
	Up to taking a further subsequence, we can assume that $t_{s_k}$ converges to a point $t$ in $\reals_+ \cup \{+ \infty\}$. If $t=+\infty$, then $d(\alpha (s),\beta(\reals_+))$ tends to infinity by (\ref{eq:div_1}) and (\ref{eq:div_2}). If $t\in \reals_+$, the sequence $\beta(t_{s_k})$ is bounded, so
	$$
	\lim_k d(\alpha(s_k),\beta(\reals_+))=\lim_k d(\alpha(s_k),\beta(t_{s_k}))=+\infty.
	$$
	
	We can also reduce the statement to the case where $\pi(v)=\pi(w)$. In the general case we can replace $w$ by $w':=g_{-\beta_{w_+}(\pi(v),\pi(w)) }w$, so that $v$ is based on the horocycle of $w'$. But the horocycle $\beta(\reals )$ of $w$ and that of $w'$ are equidistant, so $\alpha(s)$ diverges from one half-horocycle if and only if it diverges from the other.
	
	Let $\theta$ be the signed angle between $v$ and $w$. Up to interchanging the role of $v$ and $w$, we can assume that $\theta>0$. The function defined by $f(t):= \beta_{w_+}(c_v(t),\pi(w)) $ is convex. Moreover, $f(0)=0$ and $f'(0)=-\cos\theta$. 
	
	We distinguish two cases. Assume first that $f$ vanishes again for $t_0>0$, which means that the geodesic of $v$ escapes the horoball bounded by $\beta$. Therefore, there exists $s_0$ such that for $s\ge s_0$, $\beta(s)$ stays in the half-plane $X_v^ -$ on the left of $v$. But the half-horocycle $\alpha(\reals_+)$ lies in the half-plane $X_w^+$ on the right of $w$ and the distance from $\alpha (t)$ to $c_v$ goes to infinity.
	This shows that $d(\alpha(t), \beta([s_0,+\infty)))$ goes to infinity when $t\to +\infty$, which implies the other limit by the equivalence made at the beginning.

\begin{figure}
	\centering
	\includegraphics{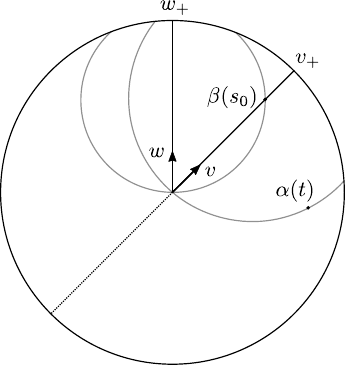}
	\caption{First case of the proof of Proposition \ref{distance_horocycles}.}
	\label{Fig:firstcase}
\end{figure}

	The second case is when $f$ does not vanish for positive time, which implies that $f$ is strictly negative. For $t>0$, the distance between $c_v(t)$ and $\beta(\reals)$ satisfies
	$$d(c_v(t),\beta(\reals))=|f(t)|\le \cos\theta \cdot t.$$
	For $s>0$, we consider the geodesic ray from $\beta(s)$ pointing to the end point of $w$, which intersects the geodesic ray of $v$ at a point $c_v(t_s)$. We have $d(c_v(t_s),\beta(s))\le \cos \theta \cdot t_s$. In particular, $t_s$ must be unbounded. Now,
	\[
	t_s\le d(\alpha(\reals_+),c_v(t_s))\le d(\alpha (\reals_+),\beta(s))+d(\beta(s),c_v(t_s))\le d(\alpha (\reals_+),\beta(s))+\cos \theta \cdot t_s.
	\]
	Passing to the limit, we see $\lim_{s\to +\infty}d(\alpha (\reals_+),\beta(s))=+\infty$. We apply again the equivalence to obtain the other limit. 
	
	\begin{figure}
		\centering
		\includegraphics{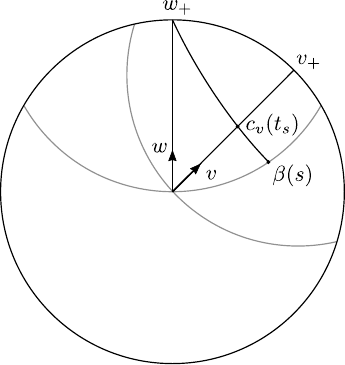}
		\caption{Second case of the proof of Proposition \ref{distance_horocycles}.}
		\label{Fig:secondcase}
	\end{figure}

\end{proof}

\subsection{Geometrically finite surfaces}

We are concerned with geometrically finite surfaces $M$. Recall that this means that the fundamental group of $M$ is finitely generated. The topology and the geometry of the ends of such surfaces is easier to understand than for arbitrary surfaces.

\begin{theorem} \cite{Kerekjarto}
	Let $M$ be a (topological, connected, complete) geometrically finite surface. Then $M$ has a finite number of ends and each of them is collared.
\end{theorem}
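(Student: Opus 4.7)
The strategy is to reduce the statement to the classical topological classification of noncompact surfaces whose fundamental group is finitely generated, originally due to Kerékjártó and later sharpened by Richards. Concretely, the plan is to show that $M$ is homeomorphic to a compact surface (possibly with boundary) with finitely many interior points removed, and then to read off the finiteness and collaring of the ends from that model.

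First I would recall (or invoke as a cited black box) the classification theorem: if $M$ is a connected surface without boundary whose fundamental group is finitely generated, then there exist a compact surface $\bar\Sigma$ with (possibly empty) boundary and finitely many interior points $p_1,\dots,p_k\in \inter(\bar\Sigma)$ together with a homeomorphism $M\cong \bar\Sigma\setminus\{p_1,\dots,p_k\}$. The heuristic for why finite generation forces such a model is that any wild end, or an infinite sequence of ends, produces an infinite family of homotopically independent essential simple loops separating off distinct topological features, contradicting finite generation of $\pi_1(M)$. For the present proof I would just cite this classification rather than reprove it.

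From such a model the finiteness of ends is immediate: the space of ends of $M$ is in bijection with the disjoint union of the set of boundary components of $\bar\Sigma$ and the set of removed points $\{p_1,\dots,p_k\}$, both of which are finite. For the collaring claim, I would treat the two types of ends separately. If an end corresponds to a boundary circle $C$ of $\bar\Sigma$, the collar neighborhood theorem for topological surfaces with boundary gives a neighborhood of $C$ in $\bar\Sigma$ homeomorphic to $C\times[0,1)\cong S^1\times[0,1)$; intersecting with $M$ yields a cylindrical neighborhood $S^1\times(0,1)\cong S^1\times(0,+\infty)$ of the end. If an end corresponds to a puncture $p_i$, then any small open disk $D\subset \bar\Sigma$ around $p_i$ satisfies $D\setminus\{p_i\}\cong S^1\times(0,1)$, so again the end admits a cylinder neighborhood.

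The main obstacle is really the appeal to the classification theorem: it is a nontrivial topological input, and proving it from scratch would require the end-compactification machinery of Freudenthal together with a careful inductive cutting argument along essential simple closed curves. Once that step is in hand, the actual content of the statement — finiteness plus collaring — is a short and essentially formal consequence, so I would treat the whole proof as a one-paragraph reduction to the cited classification followed by the two-case topological verification above.
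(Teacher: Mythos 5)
The paper offers no proof of this statement; it is cited to Kerékjártó and used as a black box. Your proposal reconstructs the standard argument behind that citation, and its structure is sound: reduce to the classification of finite-type surfaces, then read finiteness and collaring off the model. This is the right approach and matches what the cited reference provides.

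One imprecision worth fixing: you state the classification as $M\cong\bar\Sigma\setminus\{p_1,\dots,p_k\}$ with $\bar\Sigma$ compact, possibly with nonempty boundary. If $\partial\bar\Sigma\neq\emptyset$, then $\bar\Sigma\setminus\{p_1,\dots,p_k\}$ still has boundary, contradicting the hypothesis that $M$ is a boundaryless surface. The correct model is either a closed surface minus finitely many points, or equivalently the interior of a compact surface with boundary (possibly minus finitely more interior points). The slip is harmless for what you actually use, since a punctured disk and the open collar $S^1\times(0,1)$ of a boundary circle are homeomorphic, so both kinds of end in your model yield cylinder neighborhoods; but the bijection "ends of $M$ $\leftrightarrow$ boundary circles $\sqcup$ punctures" only makes literal sense once you pass to the interior of $\bar\Sigma$. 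With that repaired, your two-case verification of the collaring via the collar neighborhood theorem and the homeomorphism $D\setminus\{p\}\cong S^1\times(0,1)$ is correct and complete.
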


In fact $M$ can be obtained by removing a finite number of points of a compact surface. We will now describe a way of decomposing our nonpositively curved surface $M$ into a compact part and some neighborhoods of the finitely many ends. We start by choosing suitable neighborhoods of the ends.

Since $M$ is geometrically finite, the number of ends is finite and each one is collared.
Propositions \ref{parabolic_ngh} and \ref{funnel_ngh} provide an almost canonical way of choosing standard neighborhoods $E_1,\dots ,E_r$, $r\ge 1$, of the ends of $M$. 
Up to shrinking sufficiently enough the neighborhoods of the parabolic ends, which consist of horoballs, we can assume that $E_1,\dots ,E_r$ are pairwise disjoint. 
The remaining part of $M$,

$$K:= M\setminus\bigcup_{i=1}^r E_i,$$
will be called the \emph{compact part} of $M$.

\begin{proposition}\label{Geometrically_finite_limit_points}
	Let $M$ be a nonpositively curved surface with finitely generated fundamental group. Then the limit set $\Lambda$ only contains radial limit points and finitely many $\Gamma$-orbits of parabolic limit points.
\end{proposition}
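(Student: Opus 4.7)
The plan is to exploit the decomposition $M=K\cup E_1\cup\cdots\cup E_r$ into the compact part and standard neighborhoods of the finitely many ends. Fix a base point $p\in X$ projecting into $K$, and for an arbitrary $\xi\in\Lambda$ consider the geodesic ray $c\colon\reals_+\to X$ from $p$ to $\xi$, with projection $\bar c$ in $M$. The strategy is to show that if $\bar c$ returns to $K$ infinitely often then $\xi$ is radial, and otherwise $\bar c$ is trapped in a single $E_i$, in which case the type of end forces $\xi$ to be either radial (cylindrical/expanding case) or parabolic (parabolic case).

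In the first case, pick a compact fundamental domain $D$ for the preimage of $K$ in $X$ and a sequence $t_n\to+\infty$ with $\bar c(t_n)\in K$. Choosing $\gamma_n\in\Gamma$ so that $\gamma_n^{-1}c(t_n)\in D$ yields $d(\gamma_n p,c(t_n))\le\diam D+d(p,D)$, bounded in $n$, while $c(t_n)\to\xi$ in the cone topology; so $\gamma_n p\to\xi$ along a sequence staying within bounded distance of $c$, proving $\xi$ is radial. In the second case, $\bar c$ is eventually contained in a single $E_i$ (since the $E_i$ are pairwise disjoint and connected), and lifting to $X$ the preimage of $E_i$ splits into disjoint $\Gamma$-translates of a standard lift $\tilde E_i$ (by Propositions \ref{parabolic_ngh} and \ref{funnel_ngh}); connectedness forces $c$ to be eventually contained in a single translate $\gamma_0\tilde E_i$, and after replacing $\xi$ by $\gamma_0^{-1}\xi$ in its $\Gamma$-orbit I may assume $c$ eventually lies in $\tilde E_i$. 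If $E_i$ is cylindrical or expanding, then $\tilde E_i$ is a half-plane bounded by the axis $c_0$ of an axial isometry $\gamma\in\Gamma$ whose interval at infinity $I$ has interior in $\partial X\setminus\Lambda$; hence $\xi$ is one of the endpoints $c_0(\pm\infty)$, and $\gamma^{\pm n}p\to\xi$ at bounded distance $d(p,c_0)$ from $c_0$, so $\xi$ is radial. If $E_i$ is parabolic with associated parabolic isometry $\gamma$ of center $\eta$, then $\tilde E_i$ is a horoball at $\eta$; its closure in $\bar X$ meets $\partial X$ exactly in $H_\eta(\infty)$ by Proposition \ref{horcycle_infinity}(2), so $\xi\in H_\eta(\infty)$, and Lemma \ref{joining_limit} rules out the interior of this interval, leaving $\xi$ among the (at most two) endpoints of $H_\eta(\infty)$, each of which is fixed by $\gamma$ and thus a parabolic limit point of the same type as $\gamma$.

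For the finiteness claim, invoke the correspondence between parabolic ends of $M$ and conjugacy classes of parabolic isometries in $\Gamma$ (Proposition \ref{parabolic_ngh} and the remark following Theorem \ref{classification_ends}); since $M$ has only finitely many ends, there are only finitely many such conjugacy classes, and each contributes at most three $\Gamma$-orbits of parabolic limit points (the center, plus the two endpoints of $H_\eta(\infty)$ in the exceptional case). The main delicate point is the lifting step in the second case: one must verify carefully that a connected ray eventually contained in the $\Gamma$-invariant union of disjoint standard pieces must live in just one translate, so that the endpoint $\xi$ can be unambiguously identified in the boundary of that piece; everything else is a direct application of the preliminaries.
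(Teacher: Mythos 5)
Your proposal is correct and follows essentially the same strategy as the paper: decompose $M$ into the compact part $K$ and standard end neighborhoods $E_i$, look at a geodesic ray to $\xi\in\Lambda$, show $\xi$ is radial when the ray returns to $K$ infinitely often, and otherwise trap the ray in a lift of one $E_i$ and use the structure of that lift (half-plane or horoball) together with Lemma \ref{joining_limit} to identify $\xi$. The only tiny discrepancy is your ``at most three'' $\Gamma$-orbits per parabolic end: since the center of an exceptional parabolic isometry, if it lies in $\Lambda$ at all, must coincide with one of the two endpoints of $H_\eta(\infty)$ (by Lemma \ref{joining_limit}), the correct count is at most two, as the paper states -- but this has no bearing on the finiteness conclusion.
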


\begin{proof}
	We consider the map which associates a parabolic limit point to the corresponding end  the manifold $M$. Each fiber of this map consists of one or two $\Gamma$-orbits (one if the end is simple parabolic and two if the end is exceptional parabolic). Since the number of ends is finite, the parabolic limit points are in a finite union of $\Gamma$-orbits.
	
	Let $\xi\in \Lambda$ and let $c$ be a geodesic ray on $X$ such that $c(+\infty )=\xi$.  We consider the lift to $X$ of the standard neighborhoods of $M$,
	$$
	Y:=\Pi^ {-1}(E_1\cup \dots\cup E_r).
	$$
	There are two possibilities. If $c$ leaves every connected component of the set $Y$, then there exists a sequence $t_n\to \infty$ such that $c(t_n)\in \tilde K:=\Pi^ {-1}(K)$. Let $o\in \tilde K$. There exist isometries $\gamma_n$ in $\Gamma$ such that $d(c(t_n),\gamma_n o)\le \diam (K)<+\infty$. This means that $\xi$ is radial by definition.
	
	The other possibility is that $c$, after some time, stays in a connected component of $Y$, say $Y_0$. This component $Y_0$ is a lift of a standard neighborhood $E_i$ of and end of $M$. If the end is parabolic, then $Y_0$ is a horoball and there is a parabolic isometry $\gamma$ associated to the end which fixes $Y_0$ and such that $E_i=Y_0/\langle \gamma \rangle $. But $\xi\in  \Lambda \cap \overline{Y_0}$, so it can only be a simple parabolic or an exceptional parabolic point. If the end is cylindrical or expanding, then $Y_0$ is a half-plane and there exists an axial isometry $\gamma$ leaving invariant $Y_0$. Again, $\xi\in  \Lambda \cap \overline{Y_0}$, so it is the end point of an axis of $\gamma$, which implies that it is radial.

\end{proof}

The complement of the limit set $\Lambda$ is formed by a countable union of intervals. Each of these intervals corresponds to an end of the surface, which can be either expanding, cylindrical or exceptional parabolic. 
\section{Proofs of the results}

Recall that we chose an orientation of $X$ so that the induced orientation on $\partial X$ is counterclockwise. This gives the horocycles of $X$ a natural orientation which allowed us to define the horocyclic flow $h_s$ on $T^ 1X$. We assume that $M$ is orientable, so every element of $\Gamma$ is orientation-preserving and the orientation of $X$ descends to an orientation of $M$. Hence, the horocyclic flow $h_s$ is defined on $T^1M$ unambiguously. 
A positive half-horocycle on $T^1M$ is a positive orbit $h_{\reals_+}(v)$ of the horocyclic flow.

\subsection{Horocyclic limit points and dense orbits}

We will first establish a relation between vectors whose full horocylic orbit is dense in a certain set and horocyclic limit points. The result we prove will also follow from the study of half-horocycles later, but here we present a direct proof following \cite{Dalbo00}. 

We fix a reference point $o\in X$. A horocycle $H$ is uniquely determined by its point at infinity $\xi\in \partial X$ and the Busemann cocycle $t=\beta_\xi(H,o)\in \reals $. We use the data $(\xi,t)\in \partial X\times \reals$ as coordinates on the set of horocycles of $X$.   
The natural action of $\Gamma $ on the set of horocycles is given in these coordinates by 
$$\gamma (\xi, t)=(\gamma \xi, t+\beta_\xi(o,\gamma^ {-1}o)), \quad \forall \gamma\in\Gamma, \, (\xi,t)\in \partial X\times \reals.$$

We denote by $\Omega^+$  the set of vectors $v$ in $T^1M$ such that, for any lift $\tilde v$ to $T^ 1 X$, the positive end point $v_+$ is in the limit set $\Lambda$. The set $\Omega ^+$ is invariant by the horocyclic flow. 
The set of horocycles contained in $\Omega^ +$ is identified with the set $\Lambda\times \reals $ in the coordinates defined above. 

The following result completely characterizes dense horocycles in $\Omega^ +$.

\begin{proposition}\label{dense_horocyclic} Let $M$ be a geometrically finite nonpositively curved surface. 
	Let $v\in \Omega^ +$. The stable horocycle of $v$ is dense in $\Omega^+$ if and only if for any lift $\tilde v$ of $v$ the end point $\tilde v_+$ is horocyclic.
\end{proposition}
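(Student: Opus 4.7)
The proof splits into two implications, and I work throughout in the horocycle coordinates introduced just before the statement: every horocycle of $X$ is encoded by a pair $(\xi, t) \in \partial X \times \mathbb{R}$, and the $\Gamma$-action reads $\gamma \cdot (\xi, t) = (\gamma \xi,\, t + \beta_\xi(o, \gamma^{-1} o))$. Since $h_s$ merely parametrizes the basepoint along a single horocycle, continuity of the Busemann cocycle (Proposition \ref{properties_Busemann}) gives that density of $h_{\mathbb{R}}(v)$ in $\Omega^+$ is equivalent to density in $\Lambda \times \mathbb{R}$ of the $\Gamma$-orbit of $(\xi_0, t_0) := (\tilde v_+,\, \beta_{\tilde v_+}(\pi \tilde v, o))$. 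I would prove both implications through this equivalence.

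The forward direction I would establish by contrapositive, using the classification of limit points on a geometrically finite surface. By Proposition \ref{Geometrically_finite_limit_points} together with Lemma \ref{intersection_types_limit}, any non-horocyclic $\xi_0 \in \Lambda$ must be the center of a parabolic isometry of $\Gamma$. By Proposition \ref{isometries} (and the fact that in the nonelementary setting no axial isometry of $\Gamma$ fixes a parabolic center), every nontrivial element of the stabilizer $\Gamma_{\xi_0}$ is parabolic with center $\xi_0$, and hence preserves every horocycle centered at $\xi_0$. Consequently $\beta_{\xi_0}(o, \gamma^{-1} o) = 0$ for every $\gamma \in \Gamma_{\xi_0}$, so the fibre over $\xi_0$ of the $\Gamma$-orbit of $(\xi_0, t_0)$ reduces to the single point $(\xi_0, t_0)$. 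This rules out density in $\{\xi_0\} \times \mathbb{R}$, and a fortiori in $\Lambda \times \mathbb{R}$.

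For the backward direction, suppose $\xi_0$ is horocyclic, witnessed by $(\gamma_n) \subset \Gamma$ with $\gamma_n o \to \xi_0$ and $\beta_{\xi_0}(\gamma_n o, o) \to -\infty$, and fix a target $(\eta, s) \in \Lambda \times \mathbb{R}$. My plan combines two ingredients. Minimality of the $\Gamma$-action on $\Lambda$, standard in the nonelementary rank one setting, supplies elements $\alpha \in \Gamma$ moving any prescribed point of $\Lambda$ arbitrarily close to $\eta$. The cocycle identity
\[
\beta_{\xi_0}(o, \gamma_n \alpha^{-1} o) \;=\; \beta_{\xi_0}(o, \gamma_n o) + \beta_{\gamma_n^{-1} \xi_0}(o, \alpha^{-1} o),
\]
together with the divergence $\beta_{\xi_0}(o, \gamma_n o) = -\beta_{\xi_0}(\gamma_n o, o) \to +\infty$, provides the freedom to tune the Busemann coordinate by a suitable choice of $\alpha$. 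After passing to a subsequence along which $\gamma_n^{-1} \xi_0$ converges to some $\xi'_0 \in \Lambda$, I would select $\alpha_n$ varying with $n$ so that simultaneously $\alpha_n \xi'_0$ is close to $\eta$ and $\beta_{\xi'_0}(o, \alpha_n^{-1} o)$ compensates for the diverging Busemann term, making the composite $\alpha_n \gamma_n^{-1}$ send $(\xi_0, t_0)$ arbitrarily close to $(\eta, s)$.

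The main obstacle I expect is in the backward direction, where the angular adjustment (choice of $\alpha$) and the Busemann correction are coupled through $\gamma_n^{-1} \xi_0$: making $\beta_{\xi'_0}(o, \alpha_n^{-1} o)$ large and negative requires $\alpha_n^{-1} o$ to sink into a horoball at $\xi'_0$, which is essentially a horocyclic-type condition at $\xi'_0$. Reconciling the angular and radial conditions simultaneously, in variable curvature and without the explicit M\"obius-group calculations available in Dal'bo's strictly negatively curved treatment, will require careful continuity arguments for the Busemann cocycle (Proposition \ref{properties_Busemann}) and a diagonal extraction tailored to the geometry of the limit $\xi'_0$.
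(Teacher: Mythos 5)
Your forward direction has a genuine gap. You argue by contrapositive that if $\xi_0$ is not horocyclic, then $\xi_0$ is a parabolic centre, every $\gamma\in\Gamma_{\xi_0}$ preserves horocycles at $\xi_0$, and hence the $\Gamma$-orbit of $(\xi_0,t_0)$ meets the fibre $\{\xi_0\}\times\reals$ only in $(\xi_0,t_0)$, which you claim ``rules out density.'' But density of a set in $\Lambda\times\reals$ is a statement about its closure, not about which fibres it meets: orbit points $(\gamma\xi_0, t_0+\beta_{\xi_0}(o,\gamma^{-1}o))$ with $\gamma\xi_0\neq\xi_0$ could still accumulate onto every point of $\{\xi_0\}\times\reals$. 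To actually bound the accumulation you need the $\Gamma$-invariant horoball at a parabolic centre (Proposition \ref{parabolic_ngh}), which forces the Busemann coordinates $\beta_{\xi_0}(o,\gamma^{-1}o)$ to be bounded above along any sequence with $\gamma\xi_0\to\xi_0$; that is the content used in Lemma \ref{intersection_types_limit}(2). The paper avoids the contrapositive altogether: from density it extracts a sequence $\gamma_n$ with $\beta_{\tilde v_+}(\gamma_n^{-1}o,o)\to-\infty$, then shows the limit of $\gamma_n^{-1}o$ must be $\tilde v_+$ by analysing $H_{\tilde v_+}(\infty)$ and, in the delicate exceptional-parabolic subcase, constructs a second witnessing sequence. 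Your argument needs the horoball bound, not just triviality of a single fibre.

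For the backward direction you correctly identify the structure (horocycle coordinates, cocycle identity, minimality for angular control) and even locate the obstruction: the angular move $\alpha_n$ and the Busemann correction are coupled through $\gamma_n^{-1}\xi_0$. But you leave exactly that coupling unresolved, which is where the whole difficulty lives. The paper's resolution is structural: first reduce to showing the orbit closure contains a horocycle through a $g_t$-periodic vector (density then follows from the known result for periodic vectors in \cite{LinkPeignePicaud}), then instead of a generic $\alpha_n$ use powers $\gamma^{r_n}$ of a \emph{fixed} axial isometry whose axis bounds no flat half-plane. This decouples the two constraints: north--south dynamics of $\gamma$ on $\partial X\setminus\{\gamma^-\}$ handles the angular convergence, while $r_n$ is chosen so that $\beta_{\tilde v_+}(\gamma_no,o)+r_nl(\gamma)$ converges, and the Busemann coordinate is then controlled by Gromov products $\langle\cdot,\gamma^-\rangle_o$, which are finite because $\gamma^-$ can be joined by geodesic to any boundary point. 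Without some such decoupling device your sketch does not close.
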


In the proof of this proposition, we will use the following standard fact.

\begin{lemma}\label{duality_density}
	Let $v\in T^ 1M $. The stable horocycle of $v$ is dense in $\Omega^+$ if and only if for any lift $\tilde v$ of $v$ the $\Gamma$-orbit of horocycle determined by $\tilde v$ is dense in $\Lambda\times \reals$.
\end{lemma}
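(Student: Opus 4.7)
The plan is to translate the density statement for horocycle orbits on $T^1M$ into a density statement for $\Gamma$-orbits on $\partial X\times\reals$, using the parametrization of horocycles described above. Fix a lift $\tilde v\in T^1X$ of $v$, with horocycle coordinates $(\tilde v_+,t_v)$ where $t_v=\beta_{\tilde v_+}(\pi(\tilde v),o)$. A horocycle orbit on $T^1M$ is the projection under $\Pi:T^1X\to T^1M$ of each horocycle orbit in its $\Gamma$-orbit on $T^1X$, and $\Omega^+\subset T^1M$ corresponds via $\Pi$ to the $\Gamma$-invariant set of horocycles with coordinates in $\Lambda\times\reals$. Under this identification, the two density statements in the lemma should be equivalent up to a continuity argument.

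For the forward direction, given $(\xi,s)\in\Lambda\times\reals$ I pick any vector $w\in T^1X$ on the horocycle with these coordinates; its projection $\Pi(w)$ lies in $\Omega^+$, so by hypothesis there exist $s_n\in\reals$ with $h_{s_n}v\to\Pi(w)$ in $T^1M$. Lifting yields $\gamma_n\in\Gamma$ with $\gamma_n h_{s_n}\tilde v\to w$ in $T^1X$; the horocycle coordinates of $\gamma_n h_{s_n}\tilde v$ are exactly $\gamma_n\cdot(\tilde v_+,t_v)$, and by joint continuity of the Busemann function (Proposition \ref{properties_Busemann}) together with continuity of the end-point map $T^1X\to\partial X$, they converge to $(\xi,s)$. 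Conversely, given $u\in\Omega^+$ with lift $\tilde u$ of coordinates $(\tilde u_+,t_u)\in\Lambda\times\reals$, density provides $\gamma_n\in\Gamma$ with $\gamma_n\cdot(\tilde v_+,t_v)\to(\tilde u_+,t_u)$; I then need to exhibit parameters $s_n\in\reals$ so that $\gamma_n h_{s_n}\tilde v\to\tilde u$ in $T^1X$, and projecting will give $h_{s_n}v\to u$ as required.

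The main technical obstacle is this last step, namely upgrading convergence of horocycle coordinates to convergence of selected points on the horocycles. I plan to handle it by exhibiting a continuous section of the map sending a vector in $T^1X$ to its horocycle coordinates, for instance by sending each $(\xi,t)$ to the vector at the unique intersection of the corresponding horocycle with a canonical reference geodesic aimed at $\xi$. Joint continuity of the Busemann function and of the geodesic-joining assignment on $\overline X$ makes this section continuous, and composing with the horocyclic flow shows that convergence of horocycle coordinates produces uniform convergence on compact parameter intervals of the corresponding horocyclic arcs in $T^1X$. Applied to the sequence $\gamma_n\cdot(\tilde v_+,t_v)\to(\tilde u_+,t_u)$, this furnishes the required sequence $s_n$ in the converse direction (up to a fixed horocyclic shift to reach the specific point $\tilde u$), completing the argument.
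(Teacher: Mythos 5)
Your proposal is correct and takes essentially the same approach as the paper: the paper's proof consists of stating the duality between the horocyclic flow on $T^1M$ and the $\Gamma$-action on the space of horocycles (namely that $h_{s_n}v\to w$ in $T^1M$ iff $\gamma_n$ maps the horocycle of $\tilde v$ to one converging to the horocycle of $\tilde w$) without giving details, and your argument supplies exactly those details. In particular your continuous section $(\xi,t)\mapsto g_{-t}V(o,\xi)$ is a clean way to upgrade convergence of horocycle coordinates to convergence of selected vectors, which is the only nontrivial point the paper leaves implicit.
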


	The proof of the lemma is based on the fact that the action of the horocyclic flow on $T^1 M$ is dual to the action of $\Gamma$ on the set of horocycles on $X$. More precisely, given $v, w\in T^1M$ with lifts $\tilde v, \tilde w\in T^1 X$, there exists a sequence $s_n$ in $ \reals$ such that $h_{s_n}v$ converges to $w$ if and only if there exists a sequence $\gamma_n$ in $\Gamma$ such that the images of the horocycle of $\tilde v$ by $\gamma_n$ converge to the horocycle of $\tilde w$. Now we can prove Proposition \ref{dense_horocyclic}.

\begin{myproof}{Proposition}{\ref{dense_horocyclic}}
	We begin by proving that $\tilde v_+$ is horocyclic, provided that the stable horocycle of $v$ is dense in $\Omega^+$.
	In view of Lemma \ref{duality_density}, we know that the $\Gamma $-orbit of the horocycle $h_{\reals} \tilde v$ is dense in the set of horocycles with coordinates in  $\Lambda\times \reals$. For $n\ge 1$, the horocycle $h_{\reals }g_n \tilde v$ has coordinates $(\tilde v_+, \beta_{\tilde v_+}(\pi(\tilde v),o)+n)$.

	 By the density of $\Gamma h_{\reals}\tilde v$, we can find $\gamma_n\in \Gamma$ such that $\gamma_n\tilde v_+ \to \tilde v_+$ and 
		$$|\beta_{\tilde v_+}(\pi(\tilde v),o)+n-\beta_{\gamma_n \tilde v_+}(\gamma_n \pi (\tilde v),o)|=|n -\beta_{\tilde v_+}(o,\gamma_n^{-1}o)|\le 1.$$
	This shows that $\lim_n \beta_{\tilde v_+}(\gamma_n^{-1}o,o)=-\infty$.
	
	Now we extract a subsequence $\gamma'_n$ of $\gamma_n^ {-1}$ such that $\gamma'_n o$ converges to a point $\xi\in \Lambda$. If we can show that $\xi=\tilde v_+$, we obtain that $\tilde v_+$ is horocyclic as we wanted. The point $\xi$ must be in $H_{\tilde v_+}(\infty)$. Since $\tilde v_+\in H_{\tilde v_+}(\infty)$ lies in $\Lambda$, the set $H_{\tilde v_+}(\infty)$ must be of the form $\{\tilde{v}_+\}$, $[\tilde{v}_+, \eta]$ or $[\eta, \tilde{v}_+]$ for some $\eta \in \partial X\setminus \{\tilde{v}^+\}$. In the first case, we have $\xi=\tilde v_+$ as desired. In the second case, the interval $(\tilde{v}_+, \eta)$ is in a connected component of $\partial X\setminus\Lambda$. This component is associated to an end of the manifold.
	
	We can exclude expanding ends because the Tits distance between the extremity $\tilde v_+$ of $H_{\tilde v_+}(\infty)$ and a point in the interior of $H_{\tilde v_+}(\infty)$ is infinity, but here $Td(\tilde{v}_+, \eta)\le \pi/2$. So it must be cylindrical or exceptional parabolic. In the first case, $\xi \in  \Lambda \cap [\eta,\tilde v_+]=\{\tilde v_+\}$, so we are done. Let us deal with the case that the end is exceptional parabolic. Recall that $\Lambda $ does not intersect the the interior of $H_{\tilde v_+}(\infty)$, so the only way for $\xi$ not being equal to $\tilde v_+$ is that $\xi=\eta \in \Lambda$.
	
	We assume that the latter happens and we show that  $\tilde v_+$ is horocyclic anyway (there is a sequence different from $\gamma_n'$ satisfying the definition). Let $\gamma $ be the isometry associated to the exceptional parabolic end and let $\bar \xi$ be the point at infinity whose horocycles are left invariant by $\gamma$. Necessarily $\bar \xi$ is different from $\tilde v_+$ because vectors in $T^1 M$ whose lift points to $\bar\xi$ have periodic horocycles and here the horocycle of $v$ is dense by hypothesis. Moreover $H_{\bar \xi} (\infty)=[\tilde v_+,\eta]$. Up to taking the inverses, $\gamma^ n \pi(\tilde v)$ belongs to $H_{\bar\xi}(\pi(\tilde v))$ and converges to $\tilde v_+$. Let $\tilde w $ be the vector with base point $\pi(\tilde v)$ that points to $\bar \xi$. The half-horocycle $h_{\reals_+} \tilde w$ stays in the horoball bounded by $h_{\reals}\tilde v$ and, by Proposition \ref{distance_horocycles}, they diverge one from the other. This implies that $\beta_{\tilde v_+ }(\gamma^n o,o)$ tends to $-\infty$, showing that $\tilde v_+$ is horocyclic.

	For $H_{\tilde v_+}(\infty)=[\eta, \tilde{v}_+]$ the proof is analogous.
	
	We now assume that $\tilde v_+$ is horocyclic and show that the horocycle $h_{\reals}v$ is dense in $\Omega^+$, or equivalently, that the $\Gamma$-orbit of the horocycle determined by $\tilde v$ is dense in the space $\Lambda\times \reals$. This is already known when $v$ is periodic for the geodesic flow (see the proof of Theorem A $i)$ $\Rightarrow $ $ii)$ in \cite{LinkPeignePicaud}).
	
	For the general case, it is enough to show that if $\tilde v_+$ is horocyclic the closure of the $\Gamma$-orbit of $h_{\reals}\tilde v$ contains a horocycle with a periodic vector. Assume that $\tilde v_+$ is horocyclic: there exists a sequence $\gamma_n$ such that $\gamma_n o$ tends to $\tilde v_+$ and $\beta_{\tilde v_+}(\gamma_n o,o)$ tends to $-\infty$. Up to passing to a subsequence we can assume that $\gamma_n^ {-1}\tilde v_+$ converges to $\eta\in \partial X$. We choose an axial isometry $\gamma$ whose axis does not bound a flat half-plane such that $\eta\not \in \{\gamma^\pm \}$. This is possible because the pairs of end points of isometries not bounding a flat half-plane are dense in $\Lambda\times \Lambda$. Up to taking another subsequence of $\gamma_n$, we can also choose a sequence $r_n$ of integers going to $+\infty$ such that
	$$\beta_{\tilde v_+}(\gamma_n o ,o)+r_n l(\gamma)$$ converges to some $\lambda\in \reals$.
	
	We claim that the images under $\gamma ^ {r_n}\gamma_n^ {-1} $ of the horocycle $h_{\reals}\tilde v$ tend to a horocycle with coordinates $(\gamma^+, \alpha )$, $\alpha\in \reals$. We first observe that, since $\gamma$ is axial and its axis does not bound a flat half-plane, compact subsets of $\partial X\setminus\{\gamma^ -\}$ converge uniformly to $\gamma^ +$ \cite[Lemma III.3.3]{ballmannlectures}. So $\gamma^ {r_n}\gamma_n^ {-1}\tilde v_+$ tends to $\gamma^+$. The isometries $\gamma ^ {r_n}\gamma_n^ {-1}$ act on the second coordinate of $\partial X\times \reals$ by adding the quantity $\beta_{\tilde v_+}(o,(\gamma ^ {r_n}\gamma_n^ {-1})^ {-1}o)$. We write
	
	$$ \beta_{ \tilde v_+}(o,(\gamma ^ {r_n}\gamma_n^ {-1})^ {-1}o)=\beta_{\tilde v_+}(o,\gamma_no)+ \beta_{ \tilde v_+}(\gamma _n o,\gamma_n \gamma ^ {-r_n}o),
	$$ where the second term is equal to
	$$
	\beta_{ \tilde v_+}(\gamma _n o,\gamma_n \gamma ^ {-r_n}o) =\beta_{ \gamma_n^ {-1} \tilde v_+}(o, \gamma ^ {-r_n}o)= 2\langle \gamma_n^ {-1}\tilde v_+ , \gamma ^ -\rangle_o -r_n l(\gamma) -2\langle \gamma^ {r_n} \gamma_n^ {-1}\tilde v_+ , \gamma ^ - \rangle_o.
	$$
	The Gromov product taking values in $\reals_+\cup \{\infty\}$ is continuous on $\Lambda\times \Lambda$. Taking the limit in the previous equations, we obtain that $\beta_{ \tilde v_+}(o,(\gamma ^ {r_n}\gamma_n^ {-1})^ {-1}o)$ converges to 
	$$-\lambda +2\langle \eta , \gamma ^ - \rangle_o -2\langle \gamma^ + , \gamma ^ - \rangle_o.
	$$
	Since $\gamma^-$ is the enpoint of an axis not bounding a flat half-plane, there is a geodesic joining it to any other point at infinity, which implies that the Gromov products above are finite.
\end{myproof}

\subsection{Half-horocycles}

This section is devoted to understanding when the half-horocycles are dense in $\Omega^+$. It is divided in two parts: half-horocycles of vectors in $\Omega^+$ and half-horocycles of vectors outside of $\Omega^+$. The first part is inspired by the article \cite{Schapira11}, which does the same for hyperbolic surfaces. The pinched negative curvature case can be treated in the same way, but nonpositively curved surfaces are much more subtle.

\subsubsection{Dense half-horocycles in $\Omega^ +$}

Given a vector $\tilde w\in T^1 X$, the geodesic $c_{\tilde w}$ generated by $\tilde w$ divides $X$ into two half-planes.  One of the half-planes is oriented such that the orientation induced in the boundary $c_{\tilde w}$ is given by $\tilde w$. We denote this half-plane by $ X_{\tilde w}^-$ and the other one by $ X_{\tilde w}^+$.

G. Knieper proved that horocycles with rank $1$ $g_t$-recurrent vectors are contracting and expanding. The following lemma explains what happens in general for recurrent vectors.

\begin{lemma}\label{lemma_Knieper} \cite[Proposition 4.1]{Knieper98}
	Let $v$ be a positively (resp. negatively) recurrent vector in $T^1M$ such that, for any lift $\tilde v$ of $v$ to $T^1 X$, the geodesic $c_{\tilde v}$ does not bound a flat strip in $X_{\tilde v}^+$. Then, the positive stable (resp. unstable) half-horocycle of $\tilde v$ is contracting for positive (resp. negative) time, that is, for every $s\ge 0$,
	 $$
	 \lim_{t\to + \infty} d(g_t(\tilde v), g_t(h_s(\tilde v)))=0.
	 $$
	
	In particular, if $v$ has rank $1$  and is positively (resp. negatively) recurrent, then its stable (resp. unstable) horocycle coincides with its stable (resp. unstable) manifold and it consists of rank $1$ vectors exclusively.
\end{lemma}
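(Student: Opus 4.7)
The plan is to use convexity of the distance function plus positive recurrence to promote any failure of contraction into a bi-infinite flat strip, and then invoke the no-flat-strip hypothesis to get a contradiction. Set
\[
f(t) := d\bigl(\pi(g_t \tilde v),\,\pi(g_t h_s \tilde v)\bigr).
\]
Because $h_s\tilde v$ lies on the stable horocycle of $\tilde v$, the two geodesics are asymptotic, so $f$ is bounded on $[0,\infty)$. Nonpositive curvature makes $f$ convex, and a bounded convex function on a half-line is non-increasing, so $f(t) \downarrow a$ for some $a \ge 0$. The whole first claim reduces to showing $a = 0$.

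Suppose for contradiction that $a > 0$. Positive recurrence of $v$ provides $t_n \to +\infty$ and $\gamma_n \in \Gamma$ with $\gamma_n g_{t_n}\tilde v \to \tilde v$. The stable horocycle is $g_t$-invariant, so I can write $g_{t_n} h_s \tilde v = h_{s_n} g_{t_n}\tilde v$; since horocyclic arc-length is weakly contracted by the geodesic flow in nonpositive curvature, $s_n \in [0,s]$. Up to a subsequence $s_n \to s_\infty$, necessarily $s_\infty > 0$ (otherwise $f(t_n) \to 0 \ne a$). Passing to the limit in $\gamma_n g_{t_n} h_s \tilde v = h_{s_n}(\gamma_n g_{t_n}\tilde v)$ gives $w_\infty := h_{s_\infty}\tilde v$, and for every $t \in \reals$,
\[
d\bigl(\pi(g_t \tilde v),\,\pi(g_t w_\infty)\bigr) = \lim_n f(t+t_n) = a.
\]
The geodesics $c_{\tilde v}$ and $c_{w_\infty}$ therefore lie at constant distance $a$ on all of $\reals$, so by the Flat Strip Theorem they bound a flat strip of width $a$. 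Since $s_\infty > 0$, our orientation convention places this strip in the half-plane $X_{\tilde v}^+$, contradicting the hypothesis.

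For the rank 1 addendum: if $v$ is rank 1 then $c_{\tilde v}$ bounds no flat strip on either side, so the first part applies to every $s \ge 0$, proving contraction along the full stable horocycle. This identifies the stable horocycle with the stable manifold (one inclusion is the contraction just proved; the other uses that $d(g_t \tilde v, g_t w) \to 0$ combined with the $1$-Lipschitz property of the Busemann function to force $\pi(w)$ onto the horocycle through $\pi(\tilde v)$ centered at $\tilde v_+$). Finally, any $w$ on this horocycle whose geodesic bounded a flat strip would give a parallel geodesic $c'$ asymptotic to $c_{\tilde v}$, and convexity of $d(c_{\tilde v}(t),c'(t))$ would force $c_{\tilde v}$ to bound a flat strip as well, contradicting rank 1.

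The main obstacle I anticipate is the passage from a flat half-strip (what the convex function alone gives) to the bi-infinite flat strip needed to contradict the hypothesis in $X_{\tilde v}^+$. This is precisely what recurrence buys us: by shifting arbitrarily far forward and reading distances in both temporal directions in the limit, one gets the distance function constant on all of $\reals$ rather than just $[0,\infty)$. A secondary technical point is verifying the weak contraction $s_n \le s$ of the horocyclic parameter under the geodesic flow, which is standard but nontrivial in nonpositive curvature and follows from comparing the horocyclic arc with the straightened geodesic chord.
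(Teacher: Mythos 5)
The paper quotes this lemma from Knieper without reproving it, so there is no internal proof to compare against; your argument for the main contraction claim is the expected one and is essentially correct (convexity makes the bounded distance $f$ decrease to a limit $a\ge 0$; recurrence lets you translate far forward, and if $a>0$ the limit $w_\infty=h_{s_\infty}\tilde v$ with $s_\infty>0$ is at constant distance $a$ from $c_{\tilde v}$ on all of $\reals$, giving a flat strip in $X_{\tilde v}^+$). Two corrections, one cosmetic and one substantive. The cosmetic one: ``comparing the horocyclic arc with the straightened geodesic chord'' yields $f(t_n)\le s_n$, which is the wrong inequality. The correct reason for $s_n\le s$ is that stable Jacobi fields have non-increasing norm in nonpositive curvature (convexity of $\|J\|^2$ together with boundedness at $+\infty$), so the time-$t$ geodesic flow restricted to a stable horocycle has differential of operator norm $\le 1$.

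The substantive gap is in the final sentence. If the geodesic of $h_s\tilde v$ bounds a flat strip with parallel geodesic $c'$, then $d(c_{\tilde v}(t),c'(t))$ is convex and bounded for $t\ge 0$, hence non-increasing with limit equal to the strip width $a$; but convexity gives no control as $t\to-\infty$, so you cannot conclude that $c_{\tilde v}$ bounds a bi-infinite flat strip (at best you obtain a flat half-strip, which does not contradict rank $1$). The correct way to close this reuses recurrence: you have already shown $\gamma_n g_{t_n}h_s\tilde v\to\tilde v$ along the recurrence times, and the set of vectors whose geodesic bounds a flat strip of width $\ge a$ is closed (pass to a subsequential limit of the translated parallel geodesics, whose footpoints stay in a compact set). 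Hence a flat strip of positive width for $h_s\tilde v$ would force one for $\tilde v$, contradicting rank $1$.
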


We will also need the following refinement of \cite[Lemma III.3.3]{ballmannlectures}.

\begin{lemma}
	Let $\gamma$ be an axial isometry and let $\tilde v$ be a vector tangent to an axis of $\gamma$. If the half-plane $X_{\tilde v}^ +$ is not flat, then $\gamma $ does not fix any point in $(\tilde v -,\tilde v_+)$. In fact, for every $x\in(\tilde v -,\tilde v_+)$, $\lim_{n\to+\infty} \gamma^{\pm n}\xi =\tilde v_{\pm}$ and the convergence is uniform on compact subsets of $(\tilde v -,\tilde v_+)$. 
\end{lemma}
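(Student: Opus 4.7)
The plan is to argue by contradiction for the fixed-point claim and then to deduce the dynamical statement from it. Suppose $\gamma$ fixes some $\xi\in(\tilde v_-,\tilde v_+)$; the goal is to exhibit a flat half-plane in $\overline{X_{\tilde v}^+}$ bounded by $c_{\tilde v}$, which would contradict the hypothesis. Fix $p$ on the axis, and let $r_n:=c_{\gamma^n p,\xi}$ be parametrised by arc length. Since $\gamma\xi=\xi$, we have $\gamma r_n=r_{n+1}$, and hence
\[
d(r_0(s),r_1(s)) \,=\, d(r_0(s),\gamma r_0(s)) \,=\, d_\gamma(r_0(s)).
\]
The function $f(s):=d_\gamma(r_0(s))$ is convex along $r_0$, satisfies $f(0)=l_\gamma$ and $f\ge l_\gamma$, and is bounded above because $r_0,r_1$ are asymptotic to $\xi$. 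A convex function on $[0,+\infty)$ which attains its minimum at $0$ and is bounded above must be constant, so $d(r_0(s),r_1(s))=l_\gamma$ for every $s\ge 0$.

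Differentiating this identity via the first variation of distance gives $\cos\alpha'(s)+\cos\beta'(s)=0$, where $\alpha'(s),\beta'(s)$ are the angles at $r_0(s)$ and $r_1(s)$ between each ray's forward direction and the geodesic segment joining them; therefore $\alpha'(s)+\beta'(s)=\pi$. Let $\theta$ be the angle at $p$ between the axis and $r_0$. The geodesic quadrilateral $Q_L$ with vertices $p,\gamma p,r_1(L),r_0(L)$ then has interior angles $\theta$, $\pi-\theta$, $\pi-\beta'(L)$, $\pi-\alpha'(L)$, summing to $2\pi$. Gauss--Bonnet on $Q_L$ yields $\int_{Q_L}K\,\dd A=0$, and since $K\le 0$ we conclude $K\equiv 0$ on $Q_L$. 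Letting $L\to+\infty$, $K$ vanishes on the ideal triangle $T$ with vertices $p,\gamma p,\xi$; by $\gamma$-invariance the same holds for every $\gamma^n T$. These flat triangles glue along the rays $r_n$ into a convex region that contains the whole axis, and in the induced flat coordinates the union is isometric to a Euclidean half-plane contained in $\overline{X_{\tilde v}^+}$ and bounded by $c_{\tilde v}$, which is the desired contradiction.

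For the dynamical statement, $\gamma$ acts on $\partial X$ as an orientation-preserving homeomorphism fixing $\tilde v_{\pm}$, hence preserves the interval $(\tilde v_-,\tilde v_+)$; by the first part it has no fixed point there, so $\gamma^n\xi$ is monotone in this interval and converges to one of $\tilde v_{\pm}$. To pin down which, define $\theta(\eta):=\angle_p(V(p,\tilde v_+),V(p,\eta))$, which is continuous on $\partial X$ with $\theta(\tilde v_-)=\pi$ and $\theta(\tilde v_+)=0$. Applying the CAT($0$) angle-sum inequality to the ideal triangle $p,\gamma p,\gamma\eta$, and using that $c_{\gamma p,\gamma\eta}=\gamma c_{p,\eta}$ makes forward-axis angle $\theta(\eta)$ at $\gamma p$, gives
\[
\theta(\gamma\eta)+(\pi-\theta(\eta))\le\pi,
\]
so $\theta(\gamma^n\xi)\le\theta(\xi)<\pi$; a convergence $\gamma^n\xi\to\tilde v_-$ would force $\theta(\gamma^n\xi)\to\pi$, a contradiction. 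Hence $\gamma^n\xi\to\tilde v_+$, and the same argument applied to $\gamma^{-1}$ yields $\gamma^{-n}\xi\to\tilde v_-$. Uniform convergence on a compact $K\subset[a,b]\subset(\tilde v_-,\tilde v_+)$ is then immediate, since $\gamma^nK\subset[\gamma^n a,\gamma^n b]$ and both endpoints converge pointwise. The main obstacle is the final step of the first part: verifying that the union $\bigcup_n\gamma^n T$ of flat ideal triangles assembles into a genuine flat half-plane rather than a merely locally flat region, which requires checking convexity in $X$ and that the gluing along the rays $r_n$ is globally isometric.
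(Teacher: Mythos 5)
The paper states this lemma without proof, referring to it as a refinement of \cite[Lemma III.3.3]{ballmannlectures}, so there is no argument in the paper to compare with. Your strategy --- extracting from a hypothetical fixed point $\xi\in(\tilde v_-,\tilde v_+)$ the constancy of $d_\gamma$ along the ray $r_0=c_{p,\xi}$, then deducing flatness --- is sound, and the angle-monotonicity argument for the dynamical statement is correct once you note explicitly that the monotone sequence $\gamma^n\xi$ must converge to a fixed point of $\gamma$ in $[\tilde v_-,\tilde v_+]$, hence by the first part to one of the endpoints, before the inequality $\theta(\gamma^n\xi)\le\theta(\xi)<\pi$ rules out $\tilde v_-$.

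You do, however, leave a genuine gap in the first part, as you acknowledge: after concluding from Gauss--Bonnet that each ideal triangle $\gamma^n T$ is flat, you assert without justification that their union is \emph{globally} isometric to a Euclidean half-plane. The cleanest repair avoids both the gluing and the Gauss--Bonnet computation: you have already shown $d_\gamma\equiv l_\gamma$ along $r_0$, so every point $r_0(s)$ realizes the infimum of $d_\gamma$ and therefore lies on an axis of $\gamma$. By Proposition~\ref{isometries}(2), for $s>0$ the axis through $r_0(s)$ is distinct from $c_{\tilde v}$ and bounds with it a flat strip, necessarily of width $d(r_0(s),c_{\tilde v})$ and contained in $\overline{X_{\tilde v}^+}$. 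The convex function $s\mapsto d(r_0(s),c_{\tilde v})$ vanishes at $s=0$ and is unbounded (otherwise $r_0$ would be asymptotic to $c_{\tilde v}$, forcing $\xi\in\{\tilde v_-,\tilde v_+\}$), hence tends to $+\infty$; the corresponding nested flat strips then exhaust a flat half-plane in $\overline{X_{\tilde v}^+}$ bounded by $c_{\tilde v}$, giving the desired contradiction directly.
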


We denote the set of vectors $v$ in $T^1M$ which have a lift in $T^1 X$ with both end points in the limit set $\Lambda$ by $\Omega$. The set $\Omega$ is precisely the set of $g_t$-nonwandering vectors \cite[Section III.1]{ballmannlectures}. Among these vectors, we consider the subset $\Omega_{NF}\subset \Omega$ of those whose geodesic does not bound a flat strip.
	 
\begin{lemma}\label{lemma_Schapira}
	$\Omega^+ =\overline{ h_{\reals_+} (\Omega_{NF})}$
\end{lemma}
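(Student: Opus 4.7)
The easy inclusion $\overline{h_{\reals_+}(\Omega_{NF})} \subseteq \Omega^+$ is immediate: every $w \in \Omega_{NF}$ has $w_+ \in \Lambda$ hence $w \in \Omega^+$; the horocyclic flow preserves the positive endpoint, so $h_{\reals_+}(\Omega_{NF}) \subseteq \Omega^+$; and $\Omega^+$ is closed as the preimage of the closed set $\Lambda$ under the continuous endpoint map $v \mapsto v_+$.

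For the reverse inclusion, fix $v \in \Omega^+$ with a lift $\tilde v \in T^1 X$, and set $p := \pi(\tilde v)$, $\xi := \tilde v_+ \in \Lambda$. The aim is to produce lifts $\tilde w_n \in T^1 X$ of vectors $w_n \in \Omega_{NF}$ and real numbers $s_n \geq 0$ with $h_{s_n}\tilde w_n \to \tilde v$. Using the density of pairs of fixed points of rank-$1$ axial isometries in $\Lambda \times \Lambda$, a fact invoked already in the proof of Proposition \ref{dense_horocyclic}, I will choose for each $n$ an axial isometry $\gamma_n \in \Gamma$ with rank-$1$ axis $c_n$ whose positive endpoint $\eta_n^+$ converges to $\xi$ and whose negative endpoint $\eta_n^- \in \Lambda$ lies on a prescribed side of $\tilde v_-$ on $\partial X$, selected in the last step to force $s_n \geq 0$. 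Since $c_n$ is rank $1$, Proposition \ref{horcycle_infinity} forces $\eta_n^- \notin H_{\eta_n^+}(\infty)$; the convex function $t \mapsto \beta_{\eta_n^+}(c_n(t), p)$ then tends to $+\infty$ as $t \to -\infty$ and to $-\infty$ as $t \to +\infty$, hence is strictly decreasing, so $c_n$ meets the horocycle $H_{\eta_n^+}(p)$ in a unique point $q_n$. Setting $\tilde w_n := V(q_n, \eta_n^+)$, its endpoints $(\eta_n^-, \eta_n^+)$ lie in $\Lambda \times \Lambda$ and its geodesic is the rank-$1$ axis $c_n$, so $\tilde w_n$ projects to $w_n \in \Omega_{NF}$. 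Let $s_n$ be the signed horocyclic arc-length from $q_n$ to $p$ along $H_{\eta_n^+}(p)$: then $h_{s_n}\tilde w_n$ is based at $p$ and points to $\eta_n^+$, so $h_{s_n}\tilde w_n = V(p, \eta_n^+) \to V(p, \xi) = \tilde v$ by continuity of $(x, \zeta) \mapsto V(x, \zeta)$.

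The main obstacle is ensuring $s_n \geq 0$. With the orientation convention matching horocycles to the counterclockwise orientation of $\partial X$ (Figure \ref{Fig:orientation}), precisely one of the two sides of $\tilde v_-$ on $\partial X$ yields $s_n \geq 0$. On that side, limit points are always available: if $\tilde v_- \in \Lambda$ we take nearby radial approximants; if $\tilde v_-$ lies in a connected component $(\eta_1, \eta_2)$ of $\partial X \setminus \Lambda$, then $\eta_1, \eta_2 \in \Lambda$ lie on opposite sides of $\tilde v_-$, so at least one is on the required side. The corner subcase, arising when $\xi$ bounds a component of $\partial X \setminus \Lambda$ associated to a cylindrical or exceptional parabolic end and the required side leads toward $\xi$ through this component, is handled by approximating $\xi$ by radial limit points from the non-degenerate arc, whose existence follows from the classification in Proposition \ref{Geometrically_finite_limit_points} together with the density of axis endpoints in $\Lambda \times \Lambda$; this yields admissible $\eta_n^-$ with $s_n \geq 0$ while preserving the convergence $h_{s_n}\tilde w_n \to \tilde v$.
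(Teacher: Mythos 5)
Your construction of the approximating vectors (rank-$1$ axes $c_n$ meeting $H_{\eta_n^+}(p)$ at a unique point, then flowing to $V(p,\eta_n^+)\to \tilde v$) is sound and takes a slightly different route from the paper, which instead works with vectors on the stable horocycle of $\tilde v$ itself, or of nearby vectors $\tilde v_n$ based at the same point $p$, and reads the sign of the horocyclic time off the cyclic order of negative endpoints. The problem lies in your last paragraph, where you try to force $s_n\ge 0$. First, the corner subcase is misdiagnosed: it occurs exactly when the counterclockwise arc $(\tilde v_+,\tilde v_-)$ is disjoint from $\Lambda$, hence is contained in a gap whose first endpoint is $\xi$. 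Since $\tilde v_+$ and $\tilde v_-$ are joined by a geodesic and both lie in the closure of that gap, item 4 of Proposition \ref{horcycle_infinity} together with Proposition \ref{TypesCC} forces the gap to be cylindrical or \emph{expanding}, never exceptional parabolic. The expanding case, which does arise (e.g.\ $v$ tangent to, or spiralling towards, a closed geodesic bounding a funnel), is therefore absent from your case analysis.

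Second, and more substantially, the resolution of the corner subcase is a leap. The condition $s_n\ge 0$ is equivalent to $\eta_n^-$ lying in the counterclockwise arc from $\eta_n^+$ to $\zeta_n:=\bigl(V(p,\eta_n^+)\bigr)_-$, and in the corner subcase the only limit points in that arc sit in the shrinking interval $(\eta_n^+,\tilde v_+]$ on the non-degenerate side of $\xi$. So you must produce rank-$1$ pairs $(\eta_n^-,\eta_n^+)$ with \emph{both} coordinates converging to $\xi$ while preserving the strict cyclic order $\eta_n^+,\eta_n^-,\tilde v_+$. This is achievable (nest two sequences of limit points tending to $\xi$ from the non-degenerate side and perturb each pair by the density statement of \cite[Corollaire 1.20]{LinkPeignePicaud}, small enough to keep the order), but your proposal never confronts the ordering constraint and simply asserts the existence of admissible $\eta_n^-$; as written, the claim that $s_n\ge 0$ can be forced does not follow. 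The paper avoids the issue by a different device in the corresponding case: it keeps $\tilde w_{n-}=\tilde v_+$ \emph{fixed} and lets the negative endpoint of $\tilde v_n=V(p,\eta_n)$ drift into $(\tilde v_+,\tilde v_-)$, which removes the need for a second sequence of limit points in a shrinking arc.
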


\begin{proof}
	Let $v\in \Omega^+$ and take a lift $\tilde v$ in $T^1 X$. By definition, the positive end point $\tilde v_+$ is in the limit set $\Lambda$. 
	First, assume that the limit set $\Lambda$ intersects the interval $(\tilde v_+, \tilde v_-)$. Since the action of $\Gamma$ is minimal on the limit set $\Lambda$, there exists $\xi\in \Lambda \cap (\tilde v_+,\tilde v_-)$ which is an end point of a lift of a  rank $1$ periodic geodesic. The points $\xi$ and $\tilde v_+$ can be joined by a unique geodesic, so we can choose $\tilde w\in h_{\reals} \tilde v$ such that $\tilde w_{-}=\xi$. Notice that the projection $w$ of $\tilde w$ to $T^1 M$ is in the set $\Omega_{NF}$. By the choice of $\xi$, it is clear that there exists $s\ge 0$ such that $h_s(w)= v$.

	If  $(\tilde v_+, \tilde v_-)$ does not intersect $\Lambda$, then $v_+$ is the first end point of an interval of $\partial X\setminus\Lambda$ of the form $I=(\tilde v_+, \eta)$ with $\eta \in [\tilde v_-, \tilde v_+)$. The end corresponding to $I$ is either cylindrical or expanding.	
	Since $\tilde v_+$ is not isolated in $\Lambda$ we can choose a sequence $\eta_n$ in $\Lambda\setminus\{\tilde v_+\}$ of end points of lifts of closed rank $1$ geodesics converging to $\tilde v_+$. Necessarily, $\eta_n \in [\tilde v_-,\tilde v_+)$. Let $\tilde v_n$ be the vector with the same base point as $\tilde v$ that points at $\eta_n$. As $n$ goes to infinity, $\tilde v_n$ tends to $\tilde v$. Since $\tilde v_+$ can be joined to $\eta_n$ for all $n$, we can consider vectors $\tilde w_n\in h_{\reals} \tilde v_n$ such that $\tilde w_{n-}=\tilde v_+$. Since $\tilde v_{n-}$ tends to $\tilde v_-$, for large $n$, $\tilde v_{n-}\in (\tilde v_+, \tilde v_-)$.  This implies that the vector $\tilde w_n$ lies in the negative horocycle orbit of $\tilde v_n$, so there exists $s_n\ge0$ such that $\tilde v_n = h_{s_n}(\tilde w_n)$. The projections $w_n$ of $\tilde w_n$ to $T^ 1M$ belong to $\Omega_{NF}$ and satisfy $\lim_n h_{s_n}(w_n)=v$. 
	
	\begin{figure}
		\centering
		\includegraphics{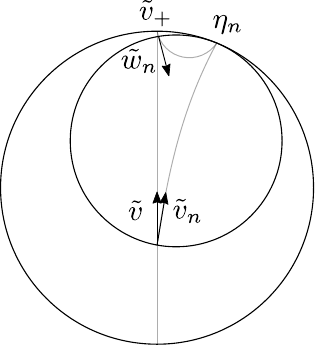}
		\caption{Elements of the proof of Lemma \ref{lemma_Schapira}.}
		\label{fig:lemma35}
	\end{figure}
	
\end{proof}

\begin{lemma}\label{transitivity}
	There exists a vector $w$ in $\Omega$ whose negative orbit $g_{\reals_-}(w)$ under the geodesic flow is dense in the set $\Omega_{NF}$ of nonwandering vectors which do not bound a flat strip.
\end{lemma}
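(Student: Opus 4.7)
The plan is to obtain $w$ as a generic point via the Baire category theorem. Since $\Omega$ is closed in the Polish space $T^1M$, it is Polish itself, and $\Omega_{NF}$ is open in $\Omega$ (the set of vectors whose geodesic bounds a flat strip is closed, by upper semicontinuity of the rank). Thus $\Omega_{NF}$ has a countable basis $(V_n)_{n\ge 1}$ of nonempty relatively open subsets. For each $n$ set
\[
A_n := \{w\in\Omega \,:\, g_{-t}(w)\in V_n \text{ for some }t>0\}=\Omega\cap\bigcup_{t>0}g_{t}(V_n),
\]
which is open in $\Omega$. A vector $w\in\Omega$ has $g_{\reals_-}(w)$ dense in $\Omega_{NF}$ if and only if $w\in\bigcap_n A_n$, so it suffices to prove each $A_n$ is dense in $\Omega$; the Baire theorem then yields a residual (hence nonempty) set of such $w$.

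To prove $A_n$ is dense in $\Omega$, I would use a ping-pong construction built from two rank $1$ axial isometries. Fix a nonempty open $W\subset\Omega$, pick $v_0\in V_n$ (necessarily rank $1$) and $w_0\in W$, and take lifts $\tilde v_0,\tilde w_0\in T^1X$; the four endpoints $\tilde v_{0,\pm}$, $\tilde w_{0,\pm}$ lie in $\Lambda$. Using the density of pairs of endpoints of rank $1$ axial isometries in $\Lambda\times\Lambda$ (invoked in the proof of Proposition \ref{dense_horocyclic}), select axial rank $1$ isometries $\gamma_v$ and $\gamma_w$ whose pairs of fixed points at infinity approximate $(\tilde v_{0,-},\tilde v_{0,+})$ and $(\tilde w_{0,-},\tilde w_{0,+})$ respectively. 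For large integers $k,\ell$, one expects $\eta_{k,\ell}:=\gamma_w^{\ell}\gamma_v^{k}$ to remain axial rank $1$, with attracting fixed point converging to $\gamma_w^+$ and repelling fixed point converging to $\gamma_v^-$ as $\min(k,\ell)\to\infty$.

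Using the refined visibility lemma stated just before Lemma \ref{lemma_Schapira}---for a rank $1$ axial $\gamma$, $\gamma^{\pm j}\xi\to\gamma^{\pm}$ uniformly on compacts of $\partial X\setminus\{\gamma^\mp\}$---the axis of $\eta_{k,\ell}$ traces the axis of $\gamma_v$ for a long initial stretch and then, after translation by $\gamma_v^{k}$, traces a $\Gamma$-translate of the axis of $\gamma_w$ for a long final stretch. Projecting to $M$, the closed geodesic associated with $\eta_{k,\ell}$ enters both $V_n$ and $W$ for $k,\ell$ large enough. Any $w$ on that closed geodesic lying in $W$ then satisfies $g_{-t}(w)\in V_n$ for some $t>0$, and hence $A_n\cap W\neq\emptyset$.

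The main obstacle is this ping-pong/shadowing step, since in the presence of flat strips one cannot invoke the usual hyperbolic north--south dynamics naively. The rank $1$ assumption on $\gamma_v$ and $\gamma_w$, together with the refined visibility lemma, restores enough north--south dynamics on $\partial X\setminus\{\text{fixed points}\}$ to run the standard ping-pong argument. Once this is in place, the remainder is bookkeeping: verifying that $\eta_{k,\ell}$ is axial rank $1$ (its axis does not bound a flat strip because it asymptotes to the rank $1$ axes of $\gamma_v$ and $\gamma_w$), and that the openness of $V_n$ and $W$ around $v_0$ and $w_0$ absorbs the approximation errors in the choice of $\gamma_v,\gamma_w$ and in the exponents $k,\ell$.
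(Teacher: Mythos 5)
Your Baire framework forces each $A_n$ to be dense in $\Omega$, and that is where the argument breaks. Take $v$ tangent to a closed geodesic lying in the interior of a flat cylinder attached to a cylindrical end. Its lift has endpoints at the two extremities of the associated interval $I\subset\partial X\setminus\Lambda$, which lie in $\Lambda$, so $v\in\Omega$. But any vector $w$ near $v$ inside the cylinder that is tilted with respect to the $\theta$-curves has one of its two geodesic rays escaping to a point in the interior of $I$, hence $w\notin\Omega$; the vectors of $\Omega$ in a small enough neighborhood of $v$ are exactly those tangent to closed geodesics of the cylinder, whose full $g_t$-orbits stay in the cylinder forever and never visit $V_n\subset\Omega_{NF}$. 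Thus, for a small relatively open $W\subset\Omega$ around $v$, we have $A_n\cap W=\emptyset$, so $A_n$ is not dense in $\Omega$ and the Baire argument does not get off the ground. This is precisely the phenomenon flagged in the paper's remark after the proof: periodic geodesics in the interior of a flat cylinder are not approximated by hyperbolic periodic vectors. The statement is an existence statement, and the set of $w$ with the desired property is certainly not residual in $\Omega$. To salvage the Baire approach you would have to work in a smaller ambient Polish space such as $\overline{\Omega_{NF}}$ or the closure of the hyperbolic periodic vectors.

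Even after that correction there is a second, deeper gap: the ping-pong/shadowing step is exactly the nontrivial content and you only sketch it. In nonpositive curvature the usual hyperbolic shadowing fails, and making the composed isometry $\gamma_w^{\ell}\gamma_v^{k}$ axial, rank $1$, with an axis actually passing through prescribed small neighborhoods of $V_n$ and $W$, is the quantitative heart of Coudène--Schapira's transitivity theorem on the closure of hyperbolic periodic orbits. The paper does not reprove this; it cites Coudène--Schapira for negative transitivity of $g_t$ on that closure, and then uses Link--Peigné--Picaud's density of endpoint pairs of rank $1$ periodic vectors in $\Lambda\times\Lambda$, together with the strip lemma, to show $\Omega_{NF}$ is contained in that closure. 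So the two routes diverge in spirit: you are attempting to rebuild the transitive orbit from scratch by ping-pong (which would essentially re-derive Coudène--Schapira), whereas the paper imports the transitivity as a black box and only has to locate $\Omega_{NF}$ inside the transitive set.
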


\begin{proof}
	Coudène and Schapira showed that the geodesic flow in restriction to the closure of the set of periodic hyperbolic vectors is positively (hence negatively) transitive \cite{CoudeneSchapira10,CoudeneSchapira14}. This implies the existence of a dense negative orbit $g_{\reals_-}(w)$ on this set. On the other hand, Link, Peigné and Picaud proved that the set of pairs of end points of rank $1$ periodic vectors is dense in $\Lambda\times \Lambda$ \cite[Corollaire 1.20]{LinkPeignePicaud}. This implies that the set $\Omega_{NF}$ is contained in the closure of the set of rank $1$ periodic vectors as we show next.
	
	Let $v$ be a nonwandering vector in $\Omega_{NF}$ and take a lift $\tilde v\in T^ 1X$. We know that $\tilde v_-$ and $\tilde v_+$ are in the limit set $\Lambda$. There exists a sequence of geodesics $c_n$ on $X$ whose projection to $M$ is closed and such that $c_n(+\infty)$ converges to $\tilde v_+$ and $c_n(-\infty)$ converges to $\tilde v_-$. We can parametrize $c_n$ such that $c_n(0)$ is the point on $c_n$ closest to the base point of $v$. We apply \cite[Lemma III.3.1]{ballmannlectures}: for $R>0$, if $n$ is big enough, any geodesic joining $c_n(-\infty)$ to $c_n(+\infty)$ is at distance less than $R$ from $c(0)$. The vectors $\dot{c}_n(0)$ converge to $v$.
\end{proof}

\begin{remark}
	If $v\in\Omega$ does not bound a flat half-plane, we can still apply the previous argument by choosing $R>0$ larger than the width of the maximal strip containing the geodesic generated by $v$. In this case, the argument shows that there is a vector generating a geodesic biasymptotic to $v$ which is in the closure of the set of hyperbolic periodic geodesics.
	
	If $v\in \Omega$ and its geodesic bounds a maximal flat half-plane, then it is also in the closure of the set of hyperbolic periodic geodesics. In fact, the geodesic of $v$ bounds a cylindrical end. The periodic geodesics in the interior of the cylinder are not in the closure of hyperbolic periodic vectors. This is already studied in \cite{CoudeneSchapira2011}.
\end{remark}

Recall that the boundary at infinity $\partial X$, which is homeomorphic to a circle, is oriented in the counterclockwise sense. An interval $I$ in $\partial X$ has two end points, which we call \emph{first end point} and \emph{second end point} of $I$ according to the orientation of $\partial X$.

\begin{proposition}\label{characterization_dense_hor_periodic}
	Let $v$ be a periodic vector in $T^1 M$. 
	Then, the half-horocycle $h_{\reals_+}(v)$ is dense in $\Omega^ +$ if and only if, for any lift $\tilde v$ of $v$ to $T^ 1X$, $\tilde v_+$ is not the second end point of an interval in $\partial X\setminus \Lambda $.
\end{proposition}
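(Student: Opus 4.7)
The plan is to prove the two implications separately. The unifying geometric observation is that, under the paper's orientation convention, the base points of the positive half-horocycle of $\tilde v$ lie in the half-plane $X^+_{\tilde v}$, whose boundary at infinity meets $\tilde v_+$ on its clockwise side. Correspondingly, the hypothesis that $\tilde v_+$ is not the second endpoint of any interval of $\partial X \setminus \Lambda$ is equivalent to the statement that $\Lambda$ accumulates at $\tilde v_+$ from this clockwise side.

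For the ``only if'' direction, suppose $\tilde v_+ = \eta_2$ is the second endpoint of some $I = (\eta_1, \eta_2) \subset \partial X \setminus \Lambda$. Since $\tilde v_+$ is the endpoint of the axis of a periodic vector and is therefore radial, Lemma \ref{intersection_types_limit} rules out the possibility that $I$ is associated to an exceptional parabolic end. By Proposition \ref{funnel_ngh}, $I$ corresponds to a cylindrical or expanding end with an axial isometry $\gamma_0$ whose axis has endpoints $\eta_1, \eta_2$, together with an invariant half-plane $\tilde U$ satisfying $\partial_\infty \tilde U = [\eta_1, \eta_2]$. The half-plane $\tilde U$ coincides with $X^+_{\tilde v}$, so all the base points of $h_{\reals_+}(\tilde v)$ lie in $\tilde U$. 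Projecting to $M$, this places $h_{\reals_+}(v)$ in the standard end neighborhood $E = \tilde U / \langle \gamma_0 \rangle$. Since $E$ is a proper subset of $M$, disjoint from the compact part $K$, and $\Omega^+$ contains vectors based in $K$ (for example, those tangent to rank 1 periodic geodesics, which exist by nonelementarity), the half-horocycle $h_{\reals_+}(v)$ cannot be dense in $\Omega^+$.

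For the ``if'' direction, suppose $\tilde v_+$ is not the second endpoint of any interval. Then $\Lambda$ accumulates at $\tilde v_+$ from the clockwise side, and the geodesic of $\tilde v$ does not bound a flat strip in $X^+_{\tilde v}$, so Lemma \ref{lemma_Knieper} applies. Since a periodic vector is radial, hence horocyclic by Lemma \ref{intersection_types_limit}, Proposition \ref{dense_horocyclic} gives $\overline{h_{\reals}(v)} = \Omega^+$. The plan is to show $\Omega_{NF} \subset \overline{h_{\reals_+}(v)}$; the $h_{\reals_+}$-invariance and closedness of $\overline{h_{\reals_+}(v)}$ then yield $h_{\reals_+}(\Omega_{NF}) \subset \overline{h_{\reals_+}(v)}$, and Lemma \ref{lemma_Schapira} delivers $\Omega^+ \subset \overline{h_{\reals_+}(v)}$. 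To prove the inclusion on $\Omega_{NF}$, fix $\tilde w \in \Omega_{NF}$; the density of endpoint pairs of rank 1 periodic vectors in $\Lambda \times \Lambda$ \cite{LinkPeignePicaud}, together with the clockwise accumulation of $\Lambda$ at $\tilde v_+$, yields a sequence $\gamma_n \in \Gamma$ with $\gamma_n \tilde v_\pm \to \tilde w_\pm$ and with $\gamma_n \tilde v_+$ approaching $\tilde w_+$ from the clockwise side. A horocyclic-coordinate computation then produces $s_n \ge 0$ with $h_{s_n}(\gamma_n \tilde v) \to \tilde w$; Lemma \ref{lemma_Knieper} combined with the periodicity of $v$ (by pulling back along powers of the geodesic flow that fix $v$) allows us to adjust the approximation when $s_n$ would otherwise fall in the wrong range.

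The main obstacle is the sign analysis for $s_n$: verifying that ``$\gamma_n \tilde v_+ \to \tilde w_+$ from the clockwise side of $\partial X$'' translates precisely to a nonnegative horocyclic parameter, which demands careful bookkeeping of the orientation of $\partial X$, the counterclockwise orientation of horocycles, and the definition of $X^{\pm}_{\tilde v}$. A secondary subtlety concerns vectors $\tilde w \in \Omega_{NF}$ whose endpoints $\tilde w_\pm$ are themselves endpoints of intervals of $\partial X \setminus \Lambda$: the choice of $\gamma_n$ must be compatible with the geometry at both $\tilde v_+$ and $\tilde w_+$, which is where the rank 1 hypothesis built into $\Omega_{NF}$ is exploited.
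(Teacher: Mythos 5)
Your ``only if'' argument is essentially the paper's, and it is fine: once $\tilde v_+$ is the second endpoint of an interval $I \subset \partial X \setminus \Lambda$, the discussion of radial points and Proposition \ref{funnel_ngh} force $I = (\tilde v_-, \tilde v_+)$ and the end to be cylindrical or expanding, and the half-horocycle $h_{\reals_+}(v)$ stays in the corresponding end neighborhood, so it is not dense.

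The ``if'' direction has a genuine gap, and it is precisely the step you wave at with ``a horocyclic-coordinate computation then produces $s_n \ge 0$ with $h_{s_n}(\gamma_n \tilde v) \to \tilde w$.'' Controlling $\gamma_n\tilde v_{\pm}\to \tilde w_{\pm}$ does not control the horocycle that $\gamma_n\tilde v$ lies on. In the coordinates $\partial X\times\reals$ on the space of horocycles, $\gamma_n\tilde v$ lies on the horocycle $(\gamma_n\tilde v_+,\;\beta_{\tilde v_+}(\pi(\tilde v),o)+\beta_{\tilde v_+}(o,\gamma_n^{-1}o))$, and nothing in your construction pins down the second coordinate, whereas for some sequence $s_n\ge 0$ to yield $h_{s_n}(\gamma_n\tilde v)\to\tilde w$ this second coordinate must converge to that of $\tilde w$. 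Your fallback -- ``pulling back along powers of the geodesic flow that fix $v$'' -- only adjusts the radial coordinate by integer multiples of $l(v)$, since the deck transformation $\gamma$ with $\gamma\tilde v = g_{l(v)}\tilde v$ shifts the second coordinate by $l(v)$; this gives density modulo $l(v)\mathbb{Z}$, not density in $\reals$. This is exactly the obstruction the paper confronts head-on: its proof is in two steps, Step~1 establishing the weaker inclusion $\Omega_{NF}\subset\overline{g_{\reals}\,h_{\reals_+}(v)}$ (where the extra $g_{\reals}$ absorbs the uncontrolled radial coordinate, using transitivity of the geodesic flow on $\Omega_{NF}$ and Knieper's contraction lemma), and Step~2 establishing $g_{\reals}(v)\subset\overline{h_{\reals_+}(v)}$ by a nonarithmeticity-of-the-length-spectrum argument (the ``Claim'' producing a second periodic vector $u$ with $|ml(v)-nl(u)|$ arbitrarily small, and Lemma \ref{lemma_nonarith}). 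Your outline omits Step~2 entirely, yet it is the crux: without it one only gets density of the full horocycle $h_{\reals}(v)$ (which you already have from Proposition \ref{dense_horocyclic}) or density of $g_{\reals}\,h_{\reals_+}(v)$, neither of which by itself gives density of the one-sided orbit. The ``main obstacle'' you identify -- the sign bookkeeping for $s_n$ -- is not where the difficulty lies; the difficulty is the unconstrained radial coordinate of the approximating horocycles.
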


\begin{proof}
	First, assume that $\tilde v_+$ is the second end point of an open interval $I$ of $\partial X\setminus \Lambda $. Then the interval $I$ is bounded by the points $\tilde v_+$ and $\tilde v_-$. So the end associated to $I$ is bounded by a closed geodesic. For $s\ge 0$, $h_s(v)$ is based on the end bounded by the closed geodesic generated by $v$. Moreover, the distance between $h_s(v)$ and this geodesic tends to $+\infty$ when $s$ goes to $+\infty$. So the half-horocycle $h_{\reals_+}(v)$ is divergent.
	
	For the converse, we will adapt \cite[Proposition 3.12]{Schapira11}. Assume that $\tilde v_+$ is not the second end point of an interval in $\partial X\setminus \Lambda $. Notice that the half-plane $X_{\tilde v}^+$ is not flat. It is enough to prove the result with the additional assumption that the geodesic generated by $\tilde v$ does not bound a flat strip in $X^+_{\tilde v}$. 
	
	\textit{Step 1.} We prove that $\Omega_{NF}\subset \overline{g_{\reals} h_{\reals_+}(v)}$.
	By Lemma \ref{transitivity}, there exists a vector $w$ in $\Omega_{NF}$ whose negative orbit $g_{\reals_-}(w)$ is dense in $\Omega_{NF}$. Since $\tilde v_+$ is not the second end point of an interval in $\partial X\setminus \Lambda$, we can lift $w$ to a vector $\tilde w$ in $T^ 1X$ with $\tilde w_-$ in the interval $(\tilde v_-,\tilde v_+)$. Moreover, there is a geodesic joining $\tilde w_-$ to $\tilde v_+$. We take the vector $\tilde u\in h_{\reals} \tilde v$ tangent to this geodesic, and we project it to a vector $u$ in $T^1 M$. By the choice of the point $\tilde u_-=\tilde w_-$, $u$ is in the positive horocyclic orbit $h_{\reals_+}(v)$ of $v$. By Lemma \ref{lemma_Knieper} the unstable horocycle of $\tilde w$ is contracting in the past, so the negative geodesic orbit of $u$ is also dense in $\Omega_{NF}$. Hence, $\Omega_{NF}\subset \overline{g_{\reals}(u)}\subset \overline{g_{\reals} h_{\reals_+}(v)}$.

\textbf{Claim.} Let $\varepsilon>0$. There exists a $g_t$-periodic vector $u$ in $T^1 M$ whose geodesic does not bound a flat half-plane and such that there exist $n,m\in \naturals$ with $|ml(v)-nl(u)|<\varepsilon$. 
\begin{proof}
	We first follow the proof of the nonarithmeticity of the length spectrum for negatively curved surfaces by Dal'bo \cite{Dalbo99}. There exist two isometries $\gamma_1$ and $\gamma_2$ with distinct axis $c_1$ and $c_2$ that intersect. Neither of the axis bounds a flat plane, so each has a repelling fixed point $\gamma_i^-$ and an attracting fixed point $\gamma_i^+$ at infinity. Up to taking the inverse of $\gamma_1$ we can assume that they are ordered in counterclockwise sense as follows: $\gamma_1^ +,\gamma_2^-,\gamma_1^-,\gamma_2^ +$ (see Figure \ref{fig:claim}). Consider the isometry $g=\gamma_1 \gamma_2$. We observe that $g [\gamma_2^+,\gamma_2^ -] \subset (\gamma_2^+,\gamma_2^ -)$ and  $g^ {-1} [\gamma_2^-,\gamma_2^ +] \subset (\gamma_2^-,\gamma_2^ +)$. 
	The only type of isometry which achieves this is an axial isometry whose axis does not bound a flat half-plane. Moreover, the axis of $g$ intersects that of $\gamma_2$ because of the position of the fixed points.
	
		\begin{figure}
		\centering
		\includegraphics{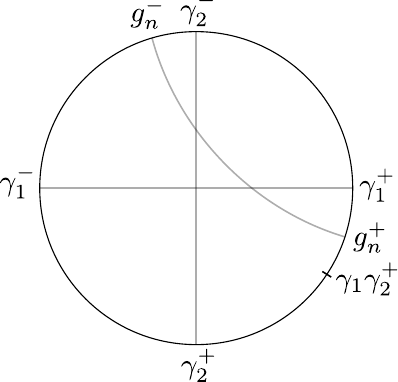}
		\caption{Order of the fixed points of $\gamma_1 $ and $\gamma_2$ at infinity.}
		\label{fig:claim}
	\end{figure}
	
	Similarly, the isometry $g_n=\gamma_1\gamma_2^ n$, $n\ge 0$, is axial, its axis does not bound a flat half-plane and it intersects the axis of $\gamma_2$.
	We observe that the attracting fixed point $g_n^+$ of $g_n$ tends to $\gamma_1\gamma_2^ +$ and the repelling fixed point $g_n^ -$ of $g_n$ tends to $\gamma_2^ -$ when $n$ goes to infinity. We have
	\begin{align*}
		l(g_n)-l(g_{n-1})&= \beta_{g_n^ -}(o, \gamma _2 ^ {-n} \gamma_1^ {-1} o) +
		\beta_{g_{n-1}^ +}(\gamma_1 \gamma_2^ {n-1} o,o) 
		\\
		&=\beta_{g_n^ -}(o, \gamma_2^ {-1} o) + \beta_{g_n^ -}(\gamma_2^ {-1} o, \gamma _2 ^ {-n} \gamma_1^ {-1} o) +
		\beta_{g_{n-1}^ +}(\gamma_1 \gamma_2^ {n-1} o,o)
		\\
		&=\beta_{g_n^ -}(o, \gamma_2^ {-1} o) + \beta_{g_n^ -}(\gamma_1 \gamma_2^ {n-1} o,  o) +
		\beta_{g_{n-1}^ +}(\gamma_1 \gamma_2^ {n-1} o,o) 
		\\
		&=  \beta_{g_n^ -}(o, \gamma_2^ {-1} o) + 2 \langle g_n^ - , g_{n-1}^ + \rangle_{\gamma_1 \gamma_2^ {n-1} o} - 2 \langle g_n^ - , g_{n-1}^ + \rangle_o
		\\
		&=  \beta_{g_n^ -}(o, \gamma_2^ {-1} o) + 2 \langle \gamma_2 g_n^ - , g_{n-1}^ + \rangle_{ o} - 2 \langle g_n^ - , g_{n-1}^ + \rangle_o,
	\end{align*}
	which converges to $\beta_{\gamma_2^ -}(o, \gamma_2^ {-1} o)=l(\gamma_2)$ when $n $ goes to infinity.
	
	If $z_{n-1}$ is a point in the intersection of $c_2$ with the axis of $g_{n-1}$, we have
	\begin{align*}
		l(g_n)&\le d(g_n z_{n-1}, z_{n-1}) 
		\\
		&< d(\gamma_{1} \gamma_2^ {n} z_{n-1}, \gamma_{1} \gamma_2^ {n-1} z_{n-1}) + d(\gamma_{1} \gamma_2^ {n-1} z_{n-1}, z_{n-1}) = l(\gamma_2) + l(g_{n-1}),
	\end{align*}
	where the last inequality is strict because the axis of $g_{n-1} $ is distinct from that of $\gamma_2$.
	
	To conclude the proof of the claim, we use the following arithmetic lemma. The proof of the lemma is postponed until the end of the proof of Proposition \ref{characterization_dense_hor_periodic}.
	
	\begin{lemma}\label{lemma_nonarith}
		Let $\alpha >0$ and $(\beta_k) $ be a sequence of real numbers such that, for all $k$, $\beta_k-\beta_{k-1}<\alpha$ and $\lim_k \beta_k-\beta_{k-1}=\alpha$. Then, for all $\varepsilon>0$, there exist $k,n,m\in \naturals$ such that
		$$
		0<|m\alpha -n\beta_k| <\varepsilon.
		$$
	\end{lemma}
	
	
	We have seen that the numbers $\alpha= l(\gamma_2)$ and $\beta_k=l(g_k)$ satisfy the hypothesis of the lemma. We deduce that, for every $\varepsilon'>0$, there exist $k,n',m'\in \naturals$ such that 
	$$
	0<|m'l(\gamma_2)- n'l(g_k)|<\varepsilon'.
	$$
	
	Finally, we compare $l(v)$ with $l(\gamma_2)$. If the ratio $l(v)/l(\gamma_2)$ is irrational, we can find $n,m\in \naturals$ such that 
	$$
	0<|ml(v)- nl(\gamma_2)|<\varepsilon
	$$
	by Dirichlet's approximation theorem. If it is equal to a rational number $p/q$, $p,q\in \naturals^ *$, then putting $\varepsilon'=\varepsilon/p$, $m=m'q$ and $n=n'$, we obtain
	$$
	0<|ml(v)- nl(g_k)|<\varepsilon.
	$$
	The vector $u$ in the claim is tangent to an axis of $\gamma_2$ or $g_{n}$.
\end{proof}

	\textit{Step 2.} We prove that $g_{\reals} (v)\subset \overline{h_{\reals_+}(v)}$. 
	
	Let $\varepsilon>0$. We write $\varepsilon_0:= ml(v)-nl(u)$, where $u$, $n$ and $m$ are given by the claim. Since $ v_+$ is not the second end point of an interval in $\partial X\setminus\Lambda$, it is possible to take a lift $\tilde u$ of $u$ to $T^ 1 X$ such that $\tilde u_{-}$ is between $\tilde v_-$ and $\tilde v_+ $. We denote by $\gamma$ and $\gamma_0$ the isometries that translate the vectors $\tilde v$ and $\tilde u$ respectively.

Since $\tilde v _+$ is  not the second end point of an interval in $\partial X\setminus \Lambda $, for any point in $(\tilde v_-,\tilde v_+)$ there is a geodesic joining it to $\tilde v_+$. For $j\in \naturals$ big enough, $\gamma_0^ {-j}\tilde v_-$ is close to $\tilde u_{-}$. So we can consider a vector $\tilde v_j$ which points positively to $\tilde v_+$, negatively to $\gamma_0^ {-j} \tilde v_{-}$ and $\tilde v_j\in h_{\reals} \tilde v$. In fact $\tilde v_j$ is in the positive half-horocycle $h_{\reals_+}\tilde v$ of $\tilde v$. We also consider the vector $\tilde w\in h_{\reals} \tilde v$ with $\tilde w_-=\tilde u_-$, which is the limit of the vectors $\tilde v_j$ when $j$ goes to $+\infty$.

	\begin{figure}
		\begin{center}
			\includegraphics[scale=1.4]{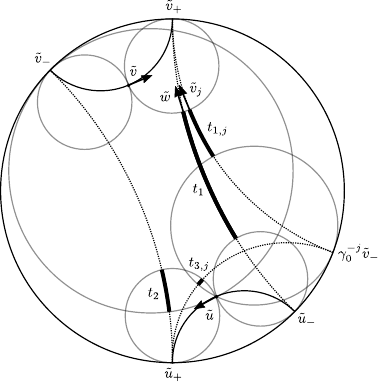}
		\end{center}
	\caption{Elements of the proof of Step 2.}
	\label{fig:step2}
\end{figure}

For $i\in \naturals$, we define a new vector $\tilde u_{i,j}= \gamma ^i g_{-il(v)} \tilde v_j $ obtained by pushing $\tilde v_j$ backwards by the geodesic flow a time $i l(v)$ and then bringing it back to $h_{\reals}\tilde v$ using the isometry $\gamma$. The vectors $\tilde u_{i,j}$ are again in the positive half-horocycle $h_{\reals_+}(\tilde v)$ of $\tilde v$. We observe that, for $j\in \naturals$ large enough and for all $i\in \naturals$, the vector $\gamma_0^j \gamma^{-i} \tilde u_{i,j}$ is in the weak unstable manifold of $\tilde v$. Wisely choosing $i$ and $j$, we will be able to control which horocycle this vector belongs to.

We set the following notations:
\begin{align*}
	t_0:=&\beta_{\tilde v_-}(\pi(\tilde v), \pi(\tilde u)),
	\\
	t_{1,j}:=&\beta_{  \gamma_0^{-j} \tilde v_-}(\pi(\tilde u), \pi(\tilde v_{j})),
	\\
	t_1:=&\beta_{ \tilde u_-}(\pi(\tilde u), \pi(\tilde w)),
	\\
	t_2:=&2 \langle \tilde v_- , \tilde u_+ \rangle_{\pi(\tilde u )},
	\\
	t_{3,j}:=&- 2\langle \gamma_0^ {-j } \tilde v_- , \tilde u_+ \rangle_{ \pi(\tilde u)}.
\end{align*} 
By continuity, $\lim_j t_{1,j} =t_1$ and $\lim_j t_{3,j}=- 2\langle \tilde u_- , \tilde u_+ \rangle_{ \pi(\tilde u)}=0$. 
The quantity $t_2$ is finite, because there is a geodesic joining $\tilde v_-$ to $\tilde u_+$. Finally, we let $T:=t_0+t_1+t_2$ and we choose $j_0$ large such that $t_0+t_{1,j}+t_2+t_{3,j}$ is $\varepsilon$-close to $T$ for $j\ge j_0$.

Using standard properties of Busemann functions, we get
\begin{align*}
	\beta_{\tilde v_-}(\pi(\tilde v), \gamma_0^{j}  \gamma ^ {-i} \pi(\tilde u_{i,j})) =& \beta_{\tilde v_-}(\pi(\tilde v),  \pi(g_{-il(v)}  \gamma_0^{j} \tilde v_{j})) = 
	\beta_{\tilde v_-}(\pi(\tilde v), \gamma_0^{j} \pi(\tilde v_{j})) +il(v) 
	\\
	=&t_0+t_{1,j}+ il(v) +\beta_{\tilde v_-}(\pi(\tilde u), \gamma_0^{j} \pi(\tilde u))
	\\
	=&	t_0+t_{1,j}+ il(v)  + 2 \langle \tilde v_- , \tilde u_+ \rangle_{\pi(\tilde u )} - 2\langle \tilde v_- , \tilde u_+ \rangle_{\gamma_0^{j} \pi(\tilde u)} 
	\\
	&-\beta_{\tilde u_+}(\pi(\tilde u), \gamma_0^{j} \pi(\tilde u))
	\\	
	=&t_0+t_{1,j}+ t_2 + t_{3,j} + il(v) -jl(u).
\end{align*}

For $k\in \naturals $, we put $j_k=j_0+km $ and $i_k=kn$. We write $T'=T-j_0 l(u)$. There exists $-\varepsilon <\alpha_k<\varepsilon$ such that $g_{\alpha_k}\gamma_0^{j_k} \gamma^{-j_k} \tilde u_{i_k,j_k}$ is in the horocycle $H^ u(g_{T'+k\varepsilon_0}\tilde v)$.

We project $\tilde u_{i_k,j_k}$ to vectors $u_{i_k,j_k}$ in the quotient $T^ 1M$. We know that they are in the the positive half-horocycle $h_{\reals_+}(v)$ of $v$ and also in the weak unstable leaf of $v$. Recall that we chose $\tilde v$ so that its geodesic does not bound a flat strip in $X_{\tilde v}^ +$. By Lemma \ref{lemma_Knieper}, the positive unstable half-horocycle of $v$ contracts in the past, so we can find $r\in \naturals$, depending on $k$, such that the distance between $g_{-rl(v)+\alpha_k} u_{i_k,j_k}\in H^ u(g_{T'+k\varepsilon_0} v)$ and $g_{T'+k\varepsilon_0} v$ is less than $\varepsilon$. We deduce that $g_{-rl(v)} u_{i_k,j_k}\in h_{\reals}  v$ is at distance from $g_{T'+k\varepsilon_0} v$ less than $2\varepsilon$. Since this holds for all $k\in \naturals$, we have shown that any vector in the $g_t$-orbit of $v$ is $3\varepsilon$-close to a vector in $h_{\reals_+}(v)$.

\textit{Conclusion.} Putting together the previous steps, we have
$$h_{\reals_+} (\Omega_{NF}) \subset h_{\reals_+} \overline{g_{\reals} h_{\reals_+}(v)} = \overline{ h_{\reals_+} g_{\reals} (v)} \subset \overline{h_{\reals_+}(v)},$$ which combined with Lemma \ref{lemma_Schapira} gives the conclusion.

\end{proof}

\begin{myproof}{Lemma}{\ref{lemma_nonarith}}
	If $\beta_k/\alpha$ is irrational for some $k$, the inequality is just an application of the Dirichlet's approximation theorem.
	
	Assume that all the quotients $\beta_k/\alpha$ are rational. We can find coprime natural numbers $p_k$ and $q_k$ such that $\beta_k/\alpha=p_k/q_k$. First, we observe that $q_k$ must be unbounded. Indeed, if they were bounded by some $q$, the numbers $\beta_k/\alpha$ would be in $1/Q \naturals$ with $Q=\prod_{i=1}^{q}i$. But this would imply that $\beta_k-\beta_{k-1}=\alpha$ for large $k$, contradicting the hypothesis. 
	
	We fix $k$ such that $\alpha/q_k<\varepsilon$. By Bézout's identity, there exist $n,m\in \naturals$ such that $|mq_k-np_k|=1$. Therefore $|m\alpha-n\beta_k|=\alpha/q_k<\varepsilon$.
\end{myproof}

Using the terminology of \cite{Schapira11}, we next prove that a radial point which is not the second end point of an interval is right horocyclic. Given $\xi\in \partial X$, $p\in X$, and $R>0$, we consider the set of points in the horoball $B_\xi(p)$ which are at distance greater that $R$ from the geodesic joining $p$ to $\xi$. This set has two connected components. We denote the connected component which is in the positive half-plane bounded by the geodesic from $p$ to $\xi$ by $B_\xi^ +(p,R)$.

\begin{lemma}\label{right_horocyclic}
	Let $\xi$ be a limit point which and is not the second end point of an interval of $\partial X\setminus \Lambda$. Assume that $\xi $ is not the center of a parabolic isometry. Fix $o\in X$. For every $p\in X$ and every $R>0$, there exists $\gamma\in \Gamma$ such that $\gamma o\in B_\xi^ +(p,R)$.
\end{lemma}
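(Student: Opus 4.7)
The plan is to combine the horocyclicity of $\xi$ with the existence of limit points on the positive side of $c_{p,\xi}$, in the spirit of Schapira's proof \cite{Schapira11} in the hyperbolic setting.

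First, I would observe that $\xi$ is a horocyclic limit point. By Lemma \ref{intersection_types_limit}, under the hypotheses that $\xi$ is a limit point and is not the center of a parabolic isometry, $\xi$ must be either radial (item 1) or an exceptional parabolic limit point distinct from the associated center (item 3); both cases imply that $\xi$ is horocyclic. Hence there is a sequence $(\gamma_n)\subset \Gamma$ with $\gamma_n o\to \xi$ in the cone topology and $\beta_\xi(\gamma_n o,o)\to -\infty$. Writing $\beta_\xi(\gamma_n o,p)=\beta_\xi(\gamma_n o,o)+\beta_\xi(o,p)$ and using continuity of the Busemann cocycle (Proposition \ref{properties_Busemann}), this already gives $\gamma_n o\in B_\xi(p)$ for all $n$ large.

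Second, to control both the side of $c_{p,\xi}$ and the distance to it, I would exploit the hypothesis that $\xi$ is not the second end point of an interval of $\partial X\setminus\Lambda$. With the orientation conventions of Section 3, this produces limit points in $\Lambda$ converging to $\xi$ from inside the positive half-plane $X_{V(p,\xi)}^+$. By density of pairs of fixed points of rank-$1$ axial isometries in $\Lambda\times\Lambda$ \cite[Corollaire 1.20]{LinkPeignePicaud}, I would then pick an axial isometry $\alpha\in\Gamma$ whose axis does not bound a flat strip, with $\alpha^+$ on the positive side very close to $\xi$ and $\alpha^-$ distinct from $\xi$. By uniform convergence of $\alpha^m$ to $\alpha^+$ on compacta of $\partial X\setminus\{\alpha^-\}$, the points $\alpha^m o$ lie in $X_{V(p,\xi)}^+$ with $d(\alpha^m o, c_{p,\xi})\to\infty$, since $\alpha^+$ is at positive Tits distance from $\xi$ (the axis of $\alpha$ not bounding a flat strip).

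Finally, I would produce $\gamma$ as $\alpha^m\gamma_n$ for suitably coordinated indices. A Busemann-cocycle computation analogous to the one at the end of the proof of Proposition \ref{dense_horocyclic}, rewriting
\[
\beta_\xi(\alpha^m\gamma_n o,o)=\beta_{\alpha^{-m}\xi}(\gamma_n o,o)+\beta_{\alpha^{-m}\xi}(o,\alpha^{-m}o)
\]
and expanding each term via Gromov products at $\alpha^-$, should let me keep $\beta_\xi(\alpha^m\gamma_n o,o)$ as negative as prescribed, while the $\alpha^m$-factor pushes the orbit point into $X_{V(p,\xi)}^+$ far from $c_{p,\xi}$. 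The main obstacle will be this coordination of $m$ and $n$: the iteration $\alpha^m$ drifts the limit of $\alpha^m\gamma_n o$ from $\xi$ toward $\alpha^+$, so the rate at which $\beta_\xi(\gamma_n o,o)\to-\infty$ must be balanced against the Busemann drift introduced by $\alpha^m$, and the fact that the axis of $\alpha$ does not bound a flat strip is essential to guarantee that the relevant Gromov products are finite and the computation closes up.
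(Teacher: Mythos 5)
Your proposal takes a genuinely different route from the paper's, and it contains a gap that I do not think is a mere technicality.

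The paper's proof of Lemma~\ref{right_horocyclic} is a short \emph{contraposition} that leans directly on the geometric finiteness: if $\Gamma o$ misses $B_\xi^+(p,R)$, then $\Pi^{-1}(K)$ misses the slightly shrunken set $B_\xi^+(c_{p,\xi}(d),R+d)$ (where $d=\diam K$), which, being connected, must sit inside a single lift $Y_0$ of a standard end-neighborhood. Since $\xi\in\overline{Y_0}$, the case analysis on the end type (simple parabolic / exceptional parabolic / cylindrical / expanding) forces $\xi$ to be either the center of a parabolic isometry or the second endpoint of the interval $\overline{Y_0}\cap\partial X$ — contradicting the hypotheses. No Busemann estimates are needed beyond the containment of $B_\xi^+$ in $Y_0$, and the decomposition of $M$ into compact part and standard neighborhoods does all the work. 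Your approach is instead constructive: extract a horocyclic sequence $\gamma_n o\to\xi$, then try to push it to the correct side and far from $c_{p,\xi}$ via an axial $\alpha$. This is in the spirit of Schapira's hyperbolic argument, but in the nonpositively curved setting it runs into a real obstruction.

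The gap is precisely the coordination you flag at the end, and I don't think the sketched computation closes. Writing, as you do,
\[
\beta_\xi(\alpha^m\gamma_n o,o)=\beta_{\alpha^{-m}\xi}(\gamma_n o,o)+\beta_\xi(\alpha^m o,o),
\]
note that for fixed $m\ge1$ the point $\alpha^{-m}\xi$ is a boundary point distinct from $\xi$, and by Lemma~\ref{joining_limit} the limit point $\xi$ cannot lie in the interior of $H_{\alpha^{-m}\xi}(\infty)$. Consequently, if $\gamma_n o\to\xi$ along (or within bounded distance of) a geodesic ray to $\xi$ — which is exactly what happens when $\xi$ is a radial limit point and $(\gamma_n)$ is the radial sequence — then $\beta_{\alpha^{-m}\xi}(\gamma_n o,o)\to+\infty$ as $n\to\infty$, with a drift of the same order as $\lvert\beta_\xi(\gamma_n o,o)\rvert$. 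The second term $\beta_\xi(\alpha^m o,o)$ is bounded below in $m$ (in fact it is bounded or tends to $+\infty$, since $\alpha^+$ cannot lie in the interior of $H_\xi(\infty)$ either), so it cannot compensate. Thus the ``fix $m$, send $n\to\infty$'' ordering fails, and so does ``fix $n$, send $m\to\infty$'' since $\alpha^m\gamma_n o\to\alpha^+$ and $\beta_\xi$ along that limit is bounded below. A genuine diagonal argument would need a quantitative cancellation between the two drifts that your sketch does not supply, and for which I see no obvious mechanism. (The computation at the end of Proposition~\ref{dense_horocyclic} is not a direct template: there one controls $\beta_{\gamma^+}(\cdot,\cdot)$ at a \emph{new} center $\gamma^+$ after conjugating, whereas here you must control $\beta_\xi$ at the \emph{original} center.) I would suggest switching to the paper's contrapositive strategy, which bypasses the Busemann bookkeeping entirely by using the finite decomposition of $M$.
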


\begin{proof}
	The proof goes by contraposition. Suppose that, for some $p\in X$ and some $R>0$, $\Gamma o$ does not intersect $B_\xi^ +(p,R)$. Let $d$ be the diameter of the compact part $K$ of $M$. Then $\Pi^{-1}(K)$ does not intersect the set $B_\xi^ +(c_{p,\xi}(d),R+d)$. So $B_\xi^ +(c_{p,\xi}(d),R+d)$ must be contained in a connected component $Y_0$ of the complement in $X$ of the set $\Pi^{-1}(K)$ which is a lift of a neighborhood $E_i$ of an end. This implies that the end point $\xi$ is in $\overline{Y_0}$. If $E_i$ is simple parabolic, then $\xi $ is a simple parabolic limit point.
	
	If $E_i$ is cylindrical, expanding or exceptional parabolic, then $\xi$ is necessarily the first end point of the interval $I=\overline{Y_0}\cap \partial X$. The first two cases are impossible, since the set $B_\xi^ +(c_{p,\xi}(d),R+d)$ would not be contained in $Y_0$. The case that $E_i$ is exceptional parabolic and $\xi$ is not the center of the horoball $Y_0$ is not possible for the same reason. The only remaining case is that $E_i$ is exceptional parabolic and $\xi$ is the center of an exceptional parabolic isometry. 
\end{proof}

\begin{proposition}\label{characterization_dense_half}
	Let $v\in \Omega^+$. Assume that the horocycle of $v$ is not periodic. Then, the half-horocycle $h_{\reals_+}(v)$ is dense in $\Omega^ +$ if and only if, for any lift $\tilde v$ of $v$ to $T^ 1X$, $\tilde v_+$ is not the second end point of an interval in $\partial X\setminus \Lambda $.
\end{proposition}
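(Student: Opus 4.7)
My plan is to adapt the proof of Proposition \ref{characterization_dense_hor_periodic} to the non-periodic setting, using periodic vectors as an intermediate tool. The ``only if'' direction runs exactly as in the periodic case: if $\tilde v_+$ is the second endpoint of an interval $I \subset \partial X \setminus \Lambda$ associated to an end $\varepsilon$, then the positive half-horocycle $h_{\reals_+}(v)$ enters the standard neighborhood of $\varepsilon$ and converges to it. For an expanding end this uses $H_{\tilde v_+}(\infty) = \{\tilde v_+\}$ combined with Proposition \ref{distance_horocycles}; for a cylindrical or exceptional parabolic end it uses Proposition \ref{horocycleparaboliccylindrical} and, for the exceptional parabolic case, Lemma \ref{distance_geod_horo}. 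Since $\Omega^+$ is disjoint from neighborhoods of the ends, $h_{\reals_+}(v)$ cannot be dense in $\Omega^+$.

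For the converse, assume $\tilde v_+$ is not the second endpoint of any interval in $\partial X \setminus \Lambda$. The first observation is that $\tilde v_+$ must be a horocyclic limit point. Since the horocycle of $v$ is not periodic, $v_+$ cannot be the center of a simple or exceptional parabolic isometry, because horocycles centered at such a point would be $\Gamma$-periodic by Proposition \ref{isometries}. Combining this with Proposition \ref{Geometrically_finite_limit_points} and Lemma \ref{intersection_types_limit} forces $v_+$ to be either a radial limit point or an exceptional parabolic limit point different from the corresponding center, both of which are horocyclic. Proposition \ref{dense_horocyclic} then gives that $h_\reals(v)$ is already dense in $\Omega^+$, so the task reduces to transferring density from the full horocycle to its positive half.

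To do this, I would aim to show that $\overline{h_{\reals_+}(v)}$ contains a $g_t$-periodic vector $w \in \Omega_{NF}$ whose endpoint $w_+$ is not the second endpoint of any interval, and then apply Proposition \ref{characterization_dense_hor_periodic} to $w$ together with the continuity of the horocyclic flow to deduce $\overline{h_{\reals_+}(w)} = \Omega^+ \subset \overline{h_{\reals_+}(v)}$. The construction of $w$ adapts Step 1 of the proof of Proposition \ref{characterization_dense_hor_periodic}, which does not use periodicity of $v$: since $\tilde v_+$ is not a ``bad'' endpoint, one can choose a periodic hyperbolic vector $\tilde w$ with $\tilde w_- \in (\tilde v_-, \tilde v_+)$, and then the unique vector $\tilde u \in h_\reals(\tilde v)$ with $\tilde u_- = \tilde w_-$ lies in $h_{\reals_+}(\tilde v)$. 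The density of the negative geodesic orbit of $w$ in $\Omega_{NF}$ (Lemma \ref{transitivity}) combined with contraction of unstable horocycles in the past (Lemma \ref{lemma_Knieper}) transfers this density to the negative geodesic orbit of $u$, so any chosen periodic $w \in \Omega_{NF}$ with admissible $w_+$ lies in $\overline{g_{\reals_-}(u)} \subset \overline{g_\reals\, h_{\reals_+}(v)}$.

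The main obstacle will be the analog of Step 2 of Proposition \ref{characterization_dense_hor_periodic}, namely passing from $\overline{g_\reals\, h_{\reals_+}(v)}$ to $\overline{h_{\reals_+}(v)}$, i.e.\ proving $g_\reals(v) \subset \overline{h_{\reals_+}(v)}$. In the periodic case this used the arithmetic nonresonance between $l(v)$ and an auxiliary periodic length $l(u)$, and no such period is available here. I expect to replace this by a forward-recurrence argument: using Lemma \ref{right_horocyclic} (applicable because $\tilde v_+$ is neither the second endpoint of an interval nor a parabolic center), the orbit $\Gamma \cdot o$ penetrates arbitrarily deep into the positive half $B^+_{\tilde v_+}(p, R)$ of every horoball at $\tilde v_+$. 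Translated by deck transformations, this yields a sequence $s_n \to +\infty$ with $h_{s_n}(v) \to v$, i.e.\ forward recurrence of $v$. Once this is secured, $\overline{h_{\reals_+}(v)}$ becomes $h_s$-invariant for every $s \in \reals$, and together with the already-established density of $h_\reals(v)$ it forces $\overline{h_{\reals_+}(v)} = \Omega^+$.
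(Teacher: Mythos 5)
Your plan for the ``only if'' direction and your observation that $\tilde v_+$ must be horocyclic (hence $h_{\reals}(v)$ is dense by Proposition \ref{dense_horocyclic}) match the paper, as does the idea of landing on a periodic vector whose positive half-horocycle is dense. The gap is in your proposed analog of Step 2. You claim that Lemma \ref{right_horocyclic} yields forward $h_s$-recurrence of $v$, but that lemma only controls the $\Gamma$-orbit of the base point $o$: it produces $\gamma_n$ with $\gamma_n o$ deep inside horoballs at $\tilde v_+$ and on the positive side, while saying nothing about where $\gamma_n^{-1}$ sends $\tilde v_+$, hence nothing about where $\gamma_n$ sends the horocycle of $\tilde v$. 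To conclude $h_{s_n}(v)\to v$ with $s_n\to +\infty$, one would need deck transformations that simultaneously bring the horocycle's center back to $\tilde v_+$ with vanishing Busemann defect and on the correct side; that is not supplied, and in fact $\gamma_n^{-1}\tilde v_+$ can converge to some entirely different limit point $\tilde u_+$. Without that control, the forward-recurrence claim is unjustified, and the Step~1 argument only places the auxiliary periodic $w$ in $\overline{g_{\reals}\, h_{\reals_+}(v)}$, not in $\overline{h_{\reals_+}(v)}$, so the conclusion does not follow.

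The paper's proof of Proposition \ref{characterization_dense_half} avoids claiming recurrence of $v$. After applying Lemma \ref{right_horocyclic}, it takes $\tilde w_n$ based at $\gamma_n o$ pointing to $\tilde v_+$, extracts a subsequential limit $u$ of $\gamma_n^{-1}\tilde w_n$ in the compact $T^1_o X$, and observes $\gamma_n^{-1}\tilde v_+ \to \tilde u_+$. It then chooses a periodic rank~$1$ vector $p$ with $\tilde p_- \in (\tilde u_-,\tilde u_+)$ (so Proposition \ref{characterization_dense_hor_periodic} applies to $p$), builds vectors $\tilde v_n' \in h_{\reals_+}(\tilde v)$ so that $\gamma_n^{-1}\tilde v_n'$ lies on the weak unstable leaf of $\tilde p$, and uses contraction of that leaf in the past (Lemma \ref{lemma_Knieper}) to show the projections $v_n'$ converge to a point on the periodic orbit of $p$. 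The auxiliary limit $u$ and the choice of $p$ adapted to $\tilde u$ are precisely the ingredients your plan is missing; they replace the arithmetic resonance argument of the periodic case rather than a recurrence argument.
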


\begin{proof}
	First, we prove that if $\tilde v_+$ is the second end point of an interval of $\partial X\setminus \Lambda$, the half-horocycle $h_{\reals_+}v$ converges to an end of the surface. We consider the interval $I$ of $\partial X\setminus \Lambda $ bounded by $\tilde v_+$. This interval corresponds to an end of the surface $\varepsilon$. We take take a standard neighborhood $E$ of that end, and consider the connected component $\tilde E$ of the lift $\Pi^ {-1}(E)$ corresponding to the interval $I$. Recall that the boundary of $\tilde E$ consists of a geodesic or a horocycle joining the two end points of $I$. 
	
	Let us treat the case that $\varepsilon$ is expanding. The horocycle $\pi( h_{\reals}\tilde v)$ cuts the geodesic $\tilde E$ orthogonally at a unique point $x$. Let $\tilde v'$ be the vector in $h_{\reals}\tilde v$ with basepoint $x$. The half-horocycle $h_{\reals_+}\tilde v'$ stays in $\tilde E$ and diverges from the geodesic boundary $\partial \tilde E$. On the surface $M$, this means that the half-horocycle $h_{\reals_+}v'$ diverges through the end $\varepsilon$.
	
	Now we deal with the case that $\varepsilon$ is exceptional parabolic or cylindrical. The boundary of $\tilde E$ is a horocycle which projects to a closed one, so its center is different from $\tilde v_+$. Let $x$ be any point in $\partial \tilde E$ and consider the vectors $\tilde v'$ and $\tilde w$ based at $x$ pointing to $\tilde v_+$ and to the center of $\partial \tilde E$, respectively. Since there exists no geodesic joining $\tilde v_+$ and the center of $\partial \tilde E$, which is in $\bar I$, the two horocycles $ \pi (h_{\reals} \tilde v')$ and $\partial \tilde E =\pi(h_{\reals } \tilde w)$ intersect just once. Therefore, $\pi(h_{\reals_+ } \tilde v ')$ is contained in $\tilde E$. By Proposition \ref{distance_horocycles}, it diverges from $\pi(h_{\reals_+ }\tilde w)$. Moreover, $\pi(g_{\reals_+}\tilde v')$ stays in the horoball $\tilde E$ and diverges from $\pi(h_{\reals_-}\tilde w)$. Since $\pi(h_{\reals_+}\tilde v')$ is on the half-plane $X^ +_{\tilde v'}$, $\pi(h_{\reals_+}\tilde v')$ also diverges from $\pi(h_{\reals_-}\tilde w)$. The horocycle $\pi(h_{\reals }\tilde v)$ is equidistant from $\pi(h_{\reals_+ } \tilde v ')$, so we also know that $\pi(h_{\reals_+} \tilde v )$ eventually enters the region $\tilde E$ and diverges from $\partial \tilde E$. This proves that $h_{\reals_+} v $ converges to $\varepsilon$.
	
	Now, we assume that $\tilde v_+$ is not the second end point of an interval in $\partial X\setminus \Lambda $ and we want to show that $h_{\reals_+}v$ is dense in $\Omega^ +$. 
	We can choose a reference point $o$ on a lift of the interior of the compact part of $M$. No geodesic passing through $o$ bounds an end of the manifold. If $c$ is a geodesic starting at $o$, the two intervals $(c(-\infty),c(+\infty))$ and $(c(+\infty),c(-\infty))$ necessarily intersect the limit set $\Lambda$.
	
	By Lemma \ref{right_horocyclic}, there exists a sequence $\gamma_n $ in $\Gamma$ such that $\gamma_n o$ stays in the half-plane $X^ +_{\tilde v}$ and converges to the end point $\tilde v_+$, and $t_n := \beta_{\tilde v_+}( o, \gamma_n o)$ tends to $+\infty$. For $n\in\naturals$, we consider the vector $\tilde w_n$ based on $\gamma_n o$ pointing at $\tilde v_+$. Notice that $\tilde v_n:=g_{-t_n} \tilde w_n$ is in the positive stable horocycle of $v$. We write $\tilde v_n = h_{s_n}(\tilde v)$ for some $s_n\ge 0$.

	The sequence of vectors $\gamma_n^ {-1} \tilde w_n$ belongs to $T^1_o X$. Up to taking a subsequence we can assume that it converges to a vector $u\in T^ 1_o X$. The sequence $\gamma_n ^ {-1} \tilde v_+$ converges to $\tilde u_+$, which is also in the limit set $\Lambda$.  
	
	By the observation made above, the limit set intersects the interval $(\tilde u_-,\tilde u_+)$. 	
	Let $ p$ be a $g_t$-periodic rank $1$ vector on $T^1M$ whose geodesic does not bound an end of the surface $M$. We choose a lift $\tilde p$ of $p$ to $T^1 X$ such that $\tilde p_-$ is in the interval $(\tilde u_-,\tilde u_+)$. By Proposition \ref{characterization_dense_hor_periodic}, the half-horocycle $h_{\reals_+}p$ is dense in $\Omega^+$. The negative end point $\tilde p_-$ can be joined to any other point at infinity. We consider the vector $\tilde z$ in $h_{\reals} \tilde u$ with $\tilde z_-=\tilde p_-$ and the vectors $\tilde w_n'\in h_{\reals}(\gamma_n^ {-1} w_n)$ with $\tilde w_{n-}' =\tilde p_-$. Since the geodesic of $\tilde z$ does not bound a flat strip, the vectors $\tilde w_n'$ tend to $\tilde z$ when $n $ goes to infinity. There exist real numbers $s>0,s_n'\in\reals$ such that $\tilde z=h_s(\tilde u)$, $\tilde w_n'=h_{s_n'}(\gamma_n^ {-1}\tilde w_n)$ and $\lim_n s_n'=s$.
	
		\begin{figure}
		\centering
		\includegraphics{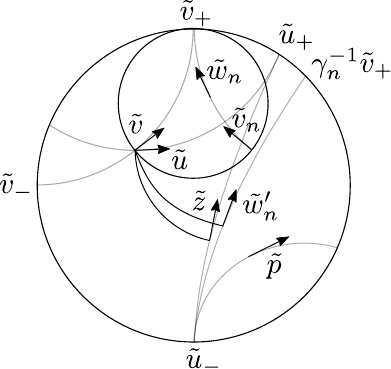}
		\caption{Elements of the proof of Proposition \ref{characterization_dense_half}}
		\label{fig:prop311}
	\end{figure}
	
	Now, since 
	$$\gamma_n \tilde w_n' = h_{s_n'}(\tilde w_n) = h_{s_n'} g_{t_n} \tilde v_n= h_{s_n'} g_{t_n} h_{s_n} \tilde v,$$
	and $s_n'$ is positive for $n$ large enough, we have 
	$$\tilde v_n':=g_{-t_n} \gamma_n \tilde w_n' = g_{-t_n} h_{s_n'} g_{t_n} h_{s_n} \tilde v \in h_{\reals_+}(\tilde v).$$
	On the other hand,
	$$\gamma_n^{-1} \tilde v_n' \in W^ {wu}(\tilde w_n')=W^ {wu}(\tilde p).$$
	Since $\tilde p$ has rank $1$, the weak unstable leaf of $\tilde p$ contracts in the past. 
	Putting $\bar t:=\beta_{\tilde p_-}(\pi( \tilde z), \pi (\tilde p ))$, we have	
	$$d(\gamma_n^ {-1} \tilde v_n', g_{-t_n + \bar t } \, \tilde p)=d(g_{-t_n} \tilde w_n', g_{-t_n + \bar t } \, \tilde p)\xrightarrow[n\to +\infty]{} 0.$$
	
	In the quotient $T^1 M$, this means that the projections $v_n'\in h_{\reals_+}(v)$ of $\tilde v_n'$ approach the periodic orbit of $p$. So they accumulate to a $g_t$-periodic vector with dense positive half-horocycle. This finishes the proof.
	
\end{proof}

\subsubsection{Half-horocycles and collared ends}

Consider an interval $I$ in the complement of the limit set $\Lambda$ that corresponds to a cylindrical or exceptional parabolic. Recall that there exists an horocycle with center in the closure of $I$ whose projection to the surface $M$ is a closed curve that bounds a neighborhood of the end associated to $I$. In the next proposition we study horocyles centered at points in $I$ different from  this distinguished point.
\begin{proposition}\label{dense_parabolic_cylin}
	Let $I=(\eta_1,\eta_2)$ be a interval corresponding to a cylindrical or exceptional parabolic end. Let $\eta\in [\eta_1,\eta_2]$ be the center of the isometry associated to the end. Let $v\in T^1 M$ such that, for some lift $\tilde v$ of $v$ to $T^ 1X$, $\tilde v_+\in [\eta_1,\eta_2]$.
	
	\begin{itemize}
	\item 	
	If $\tilde v_+ \in [\eta_1, \eta)$, then the the positive half-horocycle  $h_{\reals_+}(v)$ is dense in $\Omega^+$.
	\item 
	If $ \tilde v_+ \in (\eta, \eta_2]$, then the the positive half-horocycle  $h_{\reals_+}(v)$ is divergent.
	\end{itemize}
\end{proposition}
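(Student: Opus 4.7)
My plan is to split the statement along the stated dichotomy and use Proposition~\ref{horocycleparaboliccylindrical} to read off the asymptotic endpoints of $h_{\reals}(\tilde v)$ from the position of $\tilde v_+=c(t)$ in the Tits parametrization of $[\eta_1,\eta_2]$. With the orientation convention on horocycles, the forward endpoint of the positive half-horocycle is $c(\max(a,t-\pi/2))$, the clockwise endpoint of $H_{\tilde v_+}(\infty)$.

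For the divergent case $\tilde v_+\in(\eta,\eta_2]$, where $t\in(0,b]$, this forward endpoint lies in $(\eta_1,\eta]$, strictly inside the gap $I$. The horocycle through $\pi(\tilde v)$ centered at $\tilde v_+$ is, inside the lift $\tilde E$ of the standard end-neighborhood, a straight line (cylindrical case, flat half-plane) or an explicit convex curve (exceptional parabolic case, using the horoball structure and Lemma~\ref{distance_geod_horo}); its positive half stays in $\tilde E$ for all $s\ge 0$ and escapes to infinity there. Descending to $M$, $h_{\reals_+}(v)$ eventually stays in $E$ and leaves every compact set, hence diverges.

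For the dense case $\tilde v_+\in[\eta_1,\eta)$, one has $t-\pi/2\le -\pi/2\le a$, so the forward endpoint equals $c(a)=\eta_1\in\Lambda$. Thus the positive half-horocycle exits $\tilde E$ and accumulates at $\eta_1$. If $\tilde v_+=\eta_1$, then $v\in\Omega^+$ and its horocycle is not periodic (since $\eta_1$ is not the center of a parabolic isometry), so Proposition~\ref{characterization_dense_half} applies provided $\eta_1$ is not the second endpoint of any component of $\partial X\setminus\Lambda$. Since $\eta_1$ is the first endpoint of $I$, only a gap immediately clockwise of $\eta_1$ could cause failure. But the isometry $\gamma$ associated to the end fixes $\eta_1$, and for any $\zeta\in\Lambda\setminus\{\eta_1,\eta_2\}$ lying clockwise of $\eta_1$ the sequence $\gamma^{\mp n}\zeta\in\Lambda$ (with the sign chosen so that $\eta_1$ is the repelling fixed point of that power) converges to $\eta_1$ from the clockwise side, ruling out such a gap.

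For the remaining sub-case $\tilde v_+\in(\eta_1,\eta)$ we have $v\notin\Omega^+$ and Proposition~\ref{characterization_dense_half} is not directly available. Let $v'$ be the projection to $T^1M$ of the vector $\tilde v'$ at $\pi(\tilde v)$ pointing to $\eta_1$; by the preceding sub-case, $h_{\reals_+}(v')$ is dense in $\Omega^+$. It suffices to establish $v'\in\overline{h_{\reals_+}(v)}$, since positive invariance of the closure under $h_s$ then gives $\overline{h_{\reals_+}(v)}\supseteq\overline{h_{\reals_+}(v')}\supseteq\Omega^+$. My plan is to use radiality of $\eta_1$ along the $\gamma$-orbit (so that $\gamma^{-n}o\to\eta_1$) to translate base points $\pi(h_{s_n}(\tilde v))$ by suitable powers $\gamma^{n(s_n)}$ back into a bounded region of $X$, and then extract a convergent subsequence in $T^1 X$. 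The key obstacle is that $\gamma$ fixes $\tilde v_+$ pointwise, so the naive limit of $\gamma^{n(s_n)}h_{s_n}(\tilde v)$ still points toward $\tilde v_+\notin\Lambda$ rather than $\eta_1$; to reach $\tilde v'$ in the limit, one must combine the $\gamma$-translations with a finer analysis in the non-flat part of $X$ outside $\tilde E$, using Proposition~\ref{distance_horocycles} together with the radial/horocyclic structure at $\eta_1$ to show that the two half-horocycles $h_{\reals_+}(\tilde v)$ and $h_{\reals_+}(\tilde v')$, both asymptoting to $\eta_1$, become $\Gamma$-close modulo appropriate translations. Closing this comparison is the main technical step I anticipate.
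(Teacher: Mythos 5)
Your treatment of the divergent case is fine and close in spirit to the paper's (though the paper argues directly with the angle at $x$ between the boundary horocycle $H$ and the horocycle of $\tilde v$, rather than through the Tits parametrization of Proposition~\ref{horocycleparaboliccylindrical}). The real issue is the dense case when $\tilde v_+\in(\eta_1,\eta)$, which you explicitly leave open. You correctly identify the obstacle: the isometry $\gamma$ associated to the end fixes $[\eta_1,\eta_2]$ pointwise, so the translates $\gamma^{n}h_{s_n}(\tilde v)$ still point to $\tilde v_+\notin\Lambda$, and no extraction of a convergent subsequence of such translates can land on a vector pointing to $\eta_1$. Trying to manufacture $\tilde v'\in\overline{h_{\reals_+}(v)}$ with $\tilde v'_+=\eta_1$ is therefore the wrong target, and the ``finer analysis'' you gesture at would have to overcome the fact that all your candidate approximants sit on horocycles centered at $\tilde v_+$, not $\eta_1$.

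The paper's actual route sidesteps this entirely: it does not aim for a vector pointing to $\eta_1$, but for a rank~$1$ $g_t$-periodic vector $p$ whose positive half-horocycle is already known to be dense by Proposition~\ref{characterization_dense_hor_periodic}. Concretely, pick a lift $\tilde p$ with $\tilde p_-\in(\tilde v_-,\tilde v_+)$ and take $\tilde z\in h_{\reals}\tilde v$ with $\tilde z_-=\tilde p_-$. One checks that $\gamma^{n}g_{-t_n}\tilde z$ lies in $h_{\reals_+}\tilde v$ (with $t_n=n\,\beta_{\tilde v_+}(x,\gamma x)>0$), and its projection to $T^1M$ coincides with that of $g_{-t_n}\tilde z$, which lies in the weak unstable leaf of $\tilde p$. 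Since $p$ has rank~$1$, that leaf contracts in the past (Lemma~\ref{lemma_Knieper}), so these projections accumulate on the periodic orbit of $p$; density of $h_{\reals_+}p$ in $\Omega^+$ then finishes the proof. In short, the missing idea is to replace the ``reach $v'$'' strategy by ``reach a rank~$1$ periodic orbit via weak-unstable contraction,'' exactly as in the final part of the proof of Proposition~\ref{characterization_dense_half}. Without something of this kind, your outline for the subcase $\tilde v_+\in(\eta_1,\eta)$ does not close.

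A minor additional remark: for $\tilde v_+=\eta_1$ you invoke Proposition~\ref{characterization_dense_half}, which is legitimate provided $\eta_1$ is not the second endpoint of a component of $\partial X\setminus\Lambda$ and the horocycle of $v$ is not periodic. Your argument ruling out a gap clockwise of $\eta_1$ via iterates of $\gamma$ is on the right track, but for the exceptional parabolic case you should note that $\gamma$ fixes $[\eta_1,\eta_2]$ pointwise and does not in general push limit points toward $\eta_1$; the paper instead avoids singling out this subcase by making the same periodic-orbit argument work for the whole range $[\eta_1,\eta)$.
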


\begin{proof}
	Let $H$ be a horocycle in $X$ centered at $\eta $ whose projection to $M$ bounds a neighborhood $E_i$ of the end associated to $I$. The fact that a half-horocycle be dense in $\Omega^+$ or divergent does only depend on its center. Without loss of generality, we can assume that the basepoint $x$ of $\tilde v$ is in $H$.
	Let $H'$ be the horocycle centered at $\tilde v_+$ passing through $x$. The angle at $x$ from $H$ to $H'$ is positive if $ \tilde v_+ \in (\eta, \eta_2]$ and negative if $\tilde v_+ \in [\eta_1, \eta)$. Since $H$ and $H'$ only intersect at $x$ by Proposition \ref{joining}, the half-horocycle $\pi(h_{\reals_+}v)$ stays inside the horoball bounded by $H$ if $ \tilde v_+ \in (\eta, \eta_2]$ and stays outside of it if $\tilde v_+ \in [\eta_1, \eta)$. Moreover, by Proposition \ref{distance_horocycles}, the distance between $\pi(h_s(v))$ and $H$ goes to infinity when $s\to+\infty$. This shows that the half-horocycle $h_{\reals_+}v$ is divergent when $ \tilde v_+ \in (\eta, \eta_2]$. 
	
	We assume that $ \tilde v_+ \in (\eta, \eta_2]$ and show that the half-horocycle is dense. 	
	Let $\gamma$ be the exceptional parabolic or axial isometry associated to $I$. This isometry fixes $H$, and up to taking the inverse, we can assume that $\gamma^ n x$ tends to $\eta_1$ when $n$ goes to infinity. For $n\in \naturals$, consider the vector $w_n$ with basepoint $\gamma^ nx$ pointing to $\tilde v_+$. Observe that $\tilde w_n =\gamma^ n\tilde v$.
	We put $t_n:=\beta_{\tilde v_+}(x,\gamma^ n x)$ and consider the vectors $\tilde v_n:=g_{-t_n} \tilde w_n\in h_{\reals} \tilde v$. Moreover, since $\gamma$ fixes $\tilde v_+$,
	$$t_n= n\beta_{\tilde v_+}(x,\gamma x),$$
	where $\beta_{\tilde v_+}(x,\gamma x)$ is strictly positive.
	Also, $\gamma^n x$ is in the half plane $X^ +_{\tilde v}$, so $\tilde v_n$ is in the positive half-orbit $h_{\reals_+}\tilde v$. We remark that $\gamma^ {-n} \tilde v_n=g_{-t_n} (\tilde v)$.
	
	Let $ p$ be a $g_t$-periodic rank $1$ vector on $T^1M$ whose geodesic does not bound an end of the surface $M$. We choose a lift $\tilde p$ of $p$ to $T^1 X$ such that $\tilde p_-$ is in the interval $(\tilde v_-,\tilde v_+)$. By Proposition \ref{characterization_dense_hor_periodic}, the half-horocycle $h_{\reals_+}p$ is dense in $\Omega^+$. The negative end point $\tilde p_-$ can be joined to any other point at infinity. We consider the vector $\tilde z$ in $h_{\reals} \tilde v$ with $\tilde z_-=\tilde p_-$. For every $n\in \naturals$, there exists $s_n\ge 0$, such that $h_{s_n}(\gamma^ {-n} \tilde v_n)=g_{-t_n}(\tilde z)$. As in the proof of Proposition \ref{characterization_dense_half}, the projection $v_n$ of $\tilde v_n$ to $T^1 M$ tends to a vector in the periodic orbit of $p$. Since the positive half-orbit of $p$ is dense in $\Omega^ +$, we get the desired result.

\end{proof}

\subsection{Closure of horocyclic orbits}

\begin{proposition}\label{noaccpoints}
	Any accumulation point of an orbit $\Gamma\xi$, $\xi\in \partial X$, is in the limit set $\Lambda$.
\end{proposition}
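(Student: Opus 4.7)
The plan is to argue by contradiction: suppose $\eta \in \partial X\setminus\Lambda$ is an accumulation point of $\Gamma\xi$, so there is a sequence $(\gamma_n)$ in $\Gamma$ with $\gamma_n\xi \to \eta$ and the points $\gamma_n\xi$ pairwise distinct. Since $\Lambda$ is closed and $\partial X$ is a circle, $\partial X\setminus\Lambda$ is a disjoint union of countably many open intervals, each corresponding (by the discussion preceding Proposition \ref{TypesCC}) to a cylindrical, expanding, or exceptional parabolic end of $M$. Let $(\eta_1,\eta_2)$ be the component containing $\eta$; the $\Gamma$-invariance of $\Lambda$ forces $\xi\in\partial X\setminus\Lambda$ as well (otherwise $\gamma_n\xi\in\Lambda$ would contradict $\gamma_n\xi\in(\eta_1,\eta_2)$ for large $n$), so let $J$ be the component containing $\xi$.

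For large $n$, $\gamma_n J$ meets $(\eta_1,\eta_2)$ at $\gamma_n\xi$; since connected components of $\partial X\setminus\Lambda$ are disjoint or equal, this forces $\gamma_n J=(\eta_1,\eta_2)$, that is, $\gamma_n=\gamma_1\delta_n$ with $\delta_n\in\stab_\Gamma(J)$. Using orientability of $M$ (so that $\Gamma$ preserves the orientation of $\partial X$, and any element of $\stab_\Gamma(J)$ fixes both endpoints of $J$) together with Propositions \ref{parabolic_ngh} and \ref{funnel_ngh}, one sees that $\stab_\Gamma(J)$ is infinite cyclic, generated by an axial isometry $\gamma_J$ when the associated end is cylindrical or expanding, and by an exceptional parabolic isometry $\gamma_J$ when the end is exceptional parabolic.

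I would then split into two cases. If $J$ corresponds to an exceptional parabolic end, Proposition \ref{isometries} tells us that $\gamma_J$ fixes $\overline{J}=[\xi_1,\xi_2]$ pointwise; hence $\delta_n\xi=\xi$ for every $n$, so the $\gamma_n\xi=\gamma_1\xi$ are all equal, contradicting distinctness. If $J$ corresponds to a cylindrical or expanding end, $\gamma_J$ is axial with fixed points $\xi_1,\xi_2$ being the endpoints of $J$, and writing $\delta_n=\gamma_J^{k_n}$, the distinctness of $\gamma_n\xi$ forces $|k_n|\to\infty$ along a subsequence; the north--south dynamics of $\gamma_J$ on $\partial X$ then gives $\gamma_J^{k_n}\xi\to \xi_1$ or $\xi_2$, so $\gamma_n\xi\to\gamma_1\xi_i\in\{\eta_1,\eta_2\}\subset\Lambda$, contradicting $\eta\in(\eta_1,\eta_2)\subset\partial X\setminus\Lambda$.

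The step requiring the most care is identifying $\stab_\Gamma(J)$ as cyclic with the stated generator in each of the three subcases, which needs both the structural results from Propositions \ref{parabolic_ngh} and \ref{funnel_ngh} and the orientability hypothesis. In particular, the exceptional parabolic case is subtle in that $\gamma_J$ fixes all of $\overline{J}$ pointwise, so there is no genuine boundary dynamics on $J$ and the contradiction must come from the failure of distinctness rather than from convergence to an endpoint. Once this structural fact is in place, the remaining steps reduce to bookkeeping using material already developed in the preliminaries.
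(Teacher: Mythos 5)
Your overall route is essentially the paper's: reduce, by mapping components to components, to a sequence $\delta_n$ in the stabilizer of a single complementary interval $J$, identify that stabilizer as cyclic, and split by end type. The arguments in the exceptional parabolic and expanding cases are fine, but the cylindrical case is handled incorrectly. When $J$ is the interval of a cylindrical end, the lift $\tilde E$ of the standard neighborhood is a flat half-plane (Proposition~\ref{funnel_ngh} and the discussion after Theorem~\ref{classification_ends}), and $\gamma_J$ restricted to $\tilde E$ is a Euclidean translation; parallel rays in a flat half-plane stay at bounded distance and are therefore asymptotic, so $\gamma_J$ fixes \emph{all} of $\overline J$ pointwise, not merely the endpoints $\xi_1,\xi_2$. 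Hence the claim that ``the north--south dynamics of $\gamma_J$ on $\partial X$ gives $\gamma_J^{k_n}\xi\to\xi_1$ or $\xi_2$'' is false: in fact $\gamma_J^{k_n}\xi=\xi$ for every $n$, so $\gamma_n\xi$ is constant.

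The cylindrical case therefore belongs with the exceptional parabolic one you already flagged as subtle: $\delta_n$ fixes $\xi$, the terms $\gamma_n\xi$ coincide, and the contradiction comes from the failure of distinctness, not from convergence to an endpoint of $J$. North--south dynamics on $J$ is available only in the expanding case, where the funnel side of the axis is non-flat; this is precisely the hypothesis of the paper's unnumbered refinement of Ballmann's Lemma~III.3.3 preceding Lemma~\ref{lemma_Schapira} that your argument tacitly needs. A secondary, easily repaired point: $\gamma_1 J$ need not equal $(\eta_1,\eta_2)$, so one should re-index and write $\gamma_n=\gamma_{n_0}\delta_n$ for $n\ge n_0$, as the paper does.
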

\begin{proof}
	We reason by contradiction. Assume that there exists a sequence $(\gamma _n)_n $ of pairwise distinct isometries in $\Gamma$ such that $\gamma_n \xi$ converges to $\eta\in \partial X\setminus \Lambda$. The point $\eta $ belongs to an open interval $I$ of $ \partial X\setminus\Lambda$. There exists an $n_0$ such that, for $n\ge n_0$, $\gamma_n \xi$ belongs to $I$. Since $g_n=\gamma_n \gamma_{n_0}^ {-1}$ sends an element of $I$ to another, it leaves invariant $I$, but is different from the identity by assumption. We observe that $g_n$, $n\ge n_0$, can never be simple parabolic. 
	
	If $g_{n_0}$ is axial, the only way that it leave $I$ invariant is that the extremities of $I$ are the attracting and the repelling fixed points of $g_{n_0}$. So $I$ is associated to an expanding end of the surface. This implies that all the $g_n$ for $n\ge n_0$ are powers of a common axial isometry $\gamma$. The only possible accumulation points of $\langle \gamma \rangle \gamma_{n_0}\xi$ are the extremities of $I$, which are in the limit set.
	
	If $g_{n_0}$ is exceptional parabolic, the only way that it leave $I$ invariant is that $I$ is equal to the set of fixed points of $g_{n_0}$. So $I$ is associated to an exceptional parabolic end of the surface. This implies that all the $g_n$ for $n\ge n_0$ are powers of a common exceptional parabolic isometry $\gamma$ which fixes $I$. Therefore, the sequence $\gamma_n \xi=g_n \gamma_{n_0}\xi$ is constant for $n\ge n_0$ and equal to $\eta$. Thus $\eta$ is not an accumulation point.

\end{proof}

As a consequence of Proposition \ref{noaccpoints}, there is a unique minimal subset for the action of $\Gamma$ on $\partial X$.

\begin{proposition}\label{accumulation}
	Any accumulation point of an orbit $h_{\reals}v$ is in $\Omega^+$.
\end{proposition}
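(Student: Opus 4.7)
The plan is to lift to the universal cover and reduce the proposition to Proposition~\ref{noaccpoints}, exploiting the fact that horocyclic displacement preserves the endpoint at infinity.

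Let $w$ be an accumulation point of $h_{\reals}v$, meaning there is a sequence $s_n$ with $|s_n|\to\infty$ and $h_{s_n}v\to w$. Fix any lift $\tilde v\in T^1X$ of $v$ and any lift $\tilde w\in T^1X$ of $w$. By the covering property of the projection $T^1X\to T^1M$, there exist isometries $\gamma_n\in\Gamma$ such that $\gamma_n h_{s_n}\tilde v\to \tilde w$ in $T^1X$. Every vector $h_{s_n}\tilde v$ has the same positive endpoint $\tilde v_+$, so $\gamma_n h_{s_n}\tilde v$ has endpoint $\gamma_n\tilde v_+$; continuity of the endpoint map on $T^1X$ then gives $\gamma_n\tilde v_+\to\tilde w_+$.

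Next I would observe that the $\gamma_n$ cannot be essentially constant. Indeed, the horocycle $h_{\reals}\tilde v$ is a properly embedded curve in $T^1X$ that accumulates only at points of $\partial X$, so $h_{s_n}\tilde v$ leaves every compact set as $|s_n|\to\infty$. If $\gamma_n$ were eventually equal to some fixed $\gamma$ along a subsequence, then $\gamma h_{s_n}\tilde v$ would leave every compact set and could not converge. Thus, after extracting a subsequence, we may assume the $\gamma_n$ are pairwise distinct.

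Now I would split into two cases depending on the sequence $\gamma_n\tilde v_+$. If it takes infinitely many distinct values, then after a further extraction $\tilde w_+$ is a genuine accumulation point of the orbit $\Gamma\tilde v_+$, so Proposition~\ref{noaccpoints} yields $\tilde w_+\in\Lambda$, as desired. Otherwise, after extraction $\gamma_n\tilde v_+=\tilde w_+$ for all $n$, and the $\gamma_n$ lie in a single coset $\gamma_{n_0}\stab_{\Gamma}(\tilde v_+)$. Since the $\gamma_n$ are distinct, this forces $\stab_\Gamma(\tilde v_+)$ to be infinite.

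The expected main obstacle is handling this second (stabilizer) case cleanly. The idea is that an infinite stabilizer of a point at infinity in our setting must contain a nontrivial isometry $\gamma_0$; by Proposition~\ref{isometries}, $\gamma_0$ cannot be elliptic (it would not fix $\tilde v_+$ while being elliptic in the way we need an infinite-order element), so it is either parabolic with center $\tilde v_+$, or axial with $\tilde v_+$ as an endpoint of one of its axes. In both cases $\tilde v_+$ is a fixed point of an element of $\Gamma$ which forces $\tilde v_+\in\Lambda$ (as the limit of $\gamma_0^n p$). Since $\Lambda$ is $\Gamma$-invariant and $\tilde w_+=\gamma_{n_0}\tilde v_+$, we conclude $\tilde w_+\in\Lambda$, so $w\in\Omega^+$, finishing the proof.
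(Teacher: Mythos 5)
Your reduction to Proposition~\ref{noaccpoints} via lifting and the covering property is sound in outline, and Case~1 is correct. The gap is in Case~2, where you assert that an infinite stabilizer $\stab_\Gamma(\tilde v_+)$ forces $\tilde v_+\in\Lambda$ ``as the limit of $\gamma_0^n p$''. That implication fails in precisely the nonpositively-curved phenomena that distinguish this paper from the hyperbolic case. First, an exceptional parabolic isometry fixes the whole closed interval $H_\xi(\infty)=[\xi_1,\xi_2]$ pointwise, and an axial isometry whose axis bounds a flat half-plane (a cylindrical end) fixes every point of the corresponding interval of $\partial X\setminus\Lambda$ — so a nontrivial element of $\Gamma$ can fix $\tilde v_+$ while $\tilde v_+$ lies in the open complement of $\Lambda$. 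Second, even if $\tilde v_+$ is the \emph{center} of $\gamma_0$, in the exceptional parabolic case the orbit $\gamma_0^n p$ converges to $\xi_1$ or $\xi_2$, not to the center, and by Lemma~\ref{joining_limit} the center may lie outside $\Lambda$. So your enumeration of types for $\gamma_0$ is incomplete and the conclusion $\tilde v_+\in\Lambda$ is unjustified.

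The way to close Case~2 is not via the limit set but via Busemann levels (this is what the paper's one-line appeal to the duality with the $\Gamma$-action on horocycles $\partial X\times\reals$ is really encoding). Writing $\delta_n=\gamma_{n_0}^{-1}\gamma_n\in\stab_\Gamma(\tilde v_+)$, the map $\delta\mapsto\beta_{\tilde v_+}(o,\delta^{-1}o)$ is a homomorphism from $\stab_\Gamma(\tilde v_+)$ to $\reals$, and it governs the Busemann level of the image horocycles $\gamma_n(h_{\reals}\tilde v)$. If this homomorphism is nonzero, its values on the distinct $\delta_n$ run off to infinity, contradicting $\gamma_n h_{s_n}\tilde v\to\tilde w$; so Case~2 is impossible and one falls back to Case~1. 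If it vanishes identically, all $\gamma_n(h_{\reals}\tilde v)$ are the same horocycle $\gamma_{n_0}(h_{\reals}\tilde v)$, which is a closed subset of $T^1X$, so $\tilde w\in\gamma_{n_0}(h_{\reals}\tilde v)$ and hence $w\in h_{\reals}v$. Since the proposition, as it is used in the proof of Theorem~\ref{main_theorem}, is the inclusion $\overline{h_{\reals}v}\subset h_{\reals}v\cup\Omega^+$, the outcome $w\in h_{\reals}v$ is acceptable — but it is not the statement $\tilde v_+\in\Lambda$ you asserted, and conflating the two is the gap.
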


\begin{proof}
	It is an application of the duality between the action of the horocyclic flow on $T^ 1M$ and the action of $\Gamma $ on $\partial X$.
\end{proof}

\begin{definition}
	A geodesic ray $c: \reals_+ \to M$ is \emph{eventually minimizing} if there exists $t_0$ such that for $t,t'\ge t_0$, $$d(c(t'),c(t))=|t'-t|.$$ 
\end{definition}

We notice that an eventually minimizing geodesic ray converges to an end of the manifold. If the end is parabolic or cylindrical, the geodesic is perpendicular to the periodic horocycles that bound the end.

Now we can finally prove the main result of the article.

\begin{myproof}{Theorem}{\ref{main_theorem}}
		Let $g_{\reals_+}v$ be a geodesic ray that converges to an end $ \varepsilon$. Consider a parametrization in coordinates $(r,\theta)$ of a neighborhood of $\varepsilon$ given by Theorems \ref{coordinates-nbgh} and \ref{classification_ends}. First, we assume that the end is parabolic or cylindrical. The only eventually minimizing geodesic rays that converge to $\varepsilon$ are those that when they enter the neighborhood of $\varepsilon$ are $r$-curves. Any horocycle associated to a vector tangent to such curves is periodic. 
		
		The other geodesic rays that converge to $\varepsilon$ are not eventually minimizing. By Proposition \ref{dense_parabolic_cylin}, one half-horocycle associated to vectors tangent to these rays is dense in $\Omega^+$. Thus, we have the inclusion $ h_{\reals}v \cup \Omega^ +\subset \overline{h_{\reals}v}  $. Proposition \ref{accumulation} gives the other inclusion.
		
		If the end is expanding, then by Proposition \ref{TypesCC} the horocycle of $v$ converges to the end $\varepsilon$.
		
		Finally, if $g_{\reals_+}v$ does not converge to any end, then any lift of $v$ to $T^ 1X$ points to a non parabolic limit point. In particular $v$ belongs to $\Omega^ +$. By Proposition \ref{Geometrically_finite_limit_points}, the limit point is radial and by Proposition \ref{characterization_dense_half}, at least one half-horocycle of $v$ is dense in $\Omega^+$.
\end{myproof}

\subsection{Minimal subsets of the horocyclic flow}

We now seek $h_s$-invariant closed nonempty subsets of $T^1 M $ which are minimal with this property. Periodic noncompact closed orbits of $h_s$ are the trivial examples of such sets.
We obtain Corollary \ref{minimal_subsets}, which asserts that there is at most one nontrivial minimal subset and gives a necessary and sufficient condition for its existence.

\begin{myproof}{Corollary}{\ref{minimal_subsets}}
	The set $\Omega^+$ is always $h_s$-invariant, closed and nonempty. According to Proposition \ref{Geometrically_finite_limit_points} and Lemma \ref{intersection_types_limit}, saying that all the points $\Lambda$ are horocyclic is equivalent to asking that $\Lambda$ contains no center of a parabolic isometry. If $\Lambda$ does not contain centers of parabolic isometries, then no $v$ in $\Omega^+$ has a periodic horocycle. By Proposition \ref{characterization_dense_half}, every $v$ in $\Omega^+$ has dense orbit in $\Omega^+$, so $\Omega^+$ is minimal.
	
	For the converse, a minimal subset is equal to the closure of the orbit of any of its elements. Assume that $\overline{ h_{\reals}  v}$ is minimal but $h_{\reals}  v$ is not closed. Then $ h_{\reals}  v$ accumulates somewhere in $\Omega^+$, so in fact $\Omega^+$ is contained in $\overline {h_{\reals}  v}$. Since the latter is minimal, they must be equal. But for $\Omega^+$ to be minimal, there cannot be periodic horocycles in $\Omega^+$, put otherwise, $\Lambda$ has no centers of parabolic isometries.
\end{myproof}

The condition of the corollary amounts to ask that $M$ has neither simple parabolic ends nor exceptional parabolic ends with center in the limit set. We do not know if the latter can occur. 

\subsection{Nonwandering set of the horocyclic flow}
 In this section, we prove that the nonwandering set of the horocyclic flow is
	$$\NW(h_s)=\Omega^+\cup \Per (h_s).$$

\begin{myproof}{Theorem}{\ref{nonwandering_set}}
	First, we prove the inclusion to the left. Periodic vectors are always nonwandering and nonperiodic vectors in $\Omega^+$ have dense orbits in $\Omega^+$, which implies that they are also nonwandering (in fact, recurrent) because the $h_s$ orbits of $\Omega^+$ are not isolated. We can see this as follows. Let $v\in \Omega^+$ be nonperiodic and let $U$ be a neighborhood of $v$ in $\Omega^+$. For any $R>0$, the set $U\setminus h_{[-R,R]}(v)$ is always nonempty, for example, it contains $g_t(v)$ for $t$ small. By the density of $h_{\reals}  v$, there exists $s\in \reals $ such that $h_s(v)$ is in $U\setminus h_{[-R,R]}(v)$, so $|s|>R$. So $h_{\reals}  v$ intersects every neighborhood of $v$ with arbitrarily large times.
	
	Now, let us show that a vector $v$ which is not in $\Omega^+\cup \Per (h_s)$ is wandering. We take a lift $\tilde v$ to $T^1 X$. Its end point $\tilde v_+$ is outside the limit set and is not fixed by an exceptional parabolic isometry. Let $I$ be the interval of $\partial X \setminus \Lambda$ that contains $\tilde v_+$ and let $\varepsilon$ be the end associated to it. Let $\tilde E$ be the lift of a standard neighborhood of $\varepsilon$, which consists of a horoball or a half-plane. Its boundary $\partial \tilde E$ is a geodesic or a horocycle.

	We distinguish two cases. If the end $\varepsilon$ is expanding, we consider any open interval $J$ containing $\tilde v_+$ relatively compact in $I$.
	If the end is exceptional parabolic or cylindrical, we also consider an open interval $J$ containing $\tilde v_+$ relatively compact in $I$, but we ask in addition that $\overline J$ does not contain the center of the horoball $\tilde E$. 
	
	Given a small $\varepsilon>0$, we consider an open neighborhood of $\tilde v$ of the form 
	\[
	V = \{ \tilde w\in T ^1 X | \pi(\tilde w) \in B(\pi(v), \varepsilon) , \, \tilde w_+\in J \}.
	\] 
	First we deal with the parabolic or cylindrical case. By Proposition \ref{distance_horocycles}, the distance between the horocycle $\partial \tilde E $ and $\pi(h_s(v)))$ goes to infinity for both $s\to \pm \infty$. Since $\tilde v_+\in I$, for positive or negative large time $s$, $\pi(h_s(\tilde v))$ is in the horoball $\tilde E$ and stays inside for larger times. We denote the sign of the time $s$ by $\delta \in \{-1,1\}$. In fact, the same result implies the following property: for every $\tilde w\in \overline{V}$, there exists $s_0>0$ such that for every $s\in \reals$ such that $\delta s\ge s_0$, $h_s(\tilde w)$ is based on $\tilde E$ and 
	$$
	\lim_{s\to \delta \cdot \infty} d(\partial \tilde E , \pi(h_s(\tilde w)))=+\infty.
	$$
	This property is also true when $\varepsilon$ is an expanding end for both choices of the sign $\delta$ by Proposition \ref{TypesCC}.
	Since $\tilde V$ is compact, we can find $s_0$ such that for every $s$ with $\delta s\ge s_0$, $\pi(h_s(\overline V))\subset \tilde E$ and 
	$$
	\inf_{\tilde w\in h_s(\overline V)} d(\partial \tilde E, \tilde w) > \sup_{\tilde w \in \overline V } d ( \Gamma \partial \tilde E, \tilde w).
	$$
	This guarantees that if we take the projection $U$ of $V$ to the quotient $T^1 M$, for $\delta s\ge s_0$, $h_s(U) $ does not intersect $U$. This says that the vector $v$ is wandering.

\end{myproof}

\bibliographystyle{alpha}
\bibliography{general_library}
\end{document}